\numberwithin{equation}{section}
\newtheorem{theorem}{Theorem}[section]
\newtheorem{proposition}[theorem]{Proposition}
\newtheorem{lemma}[theorem]{Lemma}
\newtheorem{remark}{Remark}[section]
\newtheorem{example}{Example}[section]
\newcommand{\OMIT}[1]{{\bf [OMIT:} #1 \ {\bf --- end OMIT] }}  
   \renewcommand{\OMIT}[1]{}            
\newcommand{\RR}{{\mathbb{R}}}
\newcommand{\ZZ}{{\mathbb{Z}}}
\newcommand{\vecone}{{\bf 1}}
\newcommand{\veczero}{{\bf 0}}
\newcommand{\dom}{{\rm dom\,}}
\newcommand{\suppp}{{\rm supp}\sp{+}}
\newcommand{\suppm}{{\rm supp}\sp{-}}
\newcommand{\unitvec}[1]{e\sp{#1}}
\newcommand{\argmax}{\arg \max}
\newcommand{\argmin}{\arg \min}
\newcommand{\finbox}{\hspace*{\fill}$\rule{0.2cm}{0.2cm}$}
\newcommand{\Lnat}{{L$^{\natural}$}}
\newcommand{\Mnat}{{M$^{\natural}$}}
\newcommand{\LLnat}{{L$^{\natural}_{2}$}}
\newcommand{\MMnat}{{M$^{\natural}_{2}$}}
\newcommand{\LL}{{L$_{2}$}}
\newcommand{\MM}{{M$_{2}$}}
\newcommand{\Rinf}{\RR \cup \{ +\infty\}}
\newcommand{\Bvexb}{\mbox{\rm\bf (B-EXC)}}
\newcommand{\Bnvex}{\mbox{\rm (B$\sp{\natural}$-EXC)} }
\newcommand{\Bnvexb}{\mbox{\rm\bf (B$\sp{\natural}$-EXC)}}
\newcommand{\Mvexb}{\mbox{\rm\bf (M-EXC)}}
\newcommand{\Mnvex}{\mbox{\rm (M$\sp{\natural}$-EXC)} }
\newcommand{\Mnvexb}{\mbox{\rm\bf (M$\sp{\natural}$-EXC)}}
\begin{document}

\title{Inclusion and Intersection Relations Between
Fundamental Classes of Discrete Convex Functions%
}

\author{
Satoko Moriguchi%
\thanks{
Faculty of Economics and Business Administration,
Tokyo Metropolitan University, 
satoko5@tmu.ac.jp}
\ and 
Kazuo Murota%
\thanks{
The Institute of Statistical Mathematics,
and Faculty of Economics and Business Administration,
Tokyo Metropolitan University, 
murota@tmu.ac.jp}
}

\date{November 2021 / July 2022 / February 2023}

\maketitle

\begin{abstract}

In discrete convex analysis,
various convexity concepts are considered for discrete functions
such as separable convexity, L-convexity,
M-convexity, integral convexity, and multimodularity.
These concepts of discrete convex functions are not mutually independent.
For example, \Mnat-convexity is a special case of integral convexity,
and the combination of \Lnat-convexity and \Mnat-convexity
coincides with separable convexity.
This paper aims at a fairly comprehensive analysis
of the inclusion and intersection relations 
for various classes of discrete convex functions.
Emphasis is put on the analysis of multimodularity
in relation to \Lnat-convexity and \Mnat-convexity.
\end{abstract}

{\bf Keywords}:
Discrete convex analysis,
{\rm L}-convex function, 
{\rm M}-convex function, 
Multimodular function,
Separable convex function






\section{Introduction}
\label{SCintro}

A function defined on  the integer lattice $\ZZ\sp{n}$ is called 
a discrete function.
Various types of convexity   have been defined for discrete functions
in discrete convex analysis \cite{Fuj05book,Mdca98,Mdcasiam,Mdcaprimer07,Mbonn09},
such as 
L-convex functions, \Lnat-convex functions,
M-convex functions, \Mnat-convex functions, integrally convex functions,
and multimodular functions.
\Lnat-convex functions have applications in several fields including
operations research (inventory theory, scheduling, etc.)
\cite{Che17,CL21dca,SCB14},
economics and auction theory \cite{Mdcaeco16,Shi17L},
and computer vision \cite{Shi17L}.
\Mnat-convex functions have applications in
operations research \cite{CL21dca}
and economics \cite{Mdcasiam,Mbonn09,Mdcaeco16,MTcompeq03,ST15jorsj}.
Multimodular functions are used for queueing, 
Markov decision processes, and  discrete-event systems 
\cite{AGH00,AGH03,Haj85}.

The classes of discrete convex functions are not mutually independent.
In some cases, two classes of discrete convex functions
have an inclusion relation
as an (almost) immediate consequence of the definitions.
This is the case, for example, with the inclusion of 
M-convex functions in the set of \Mnat-convex functions.
In other cases, inclusion relations are established as theorems.
For example, it is a theorem \cite[Theorem~6.42]{Mdcasiam} that
\Mnat-convex functions are integrally convex functions.
This paper aims at a fairly comprehensive analysis
of the inclusion relations for various classes of discrete convex functions.

In this paper we are also interested in 
what is implied by the combination of two convexity properties.
For example, it is known 
(\cite[Theorem~3.17]{MS01rel}, \cite[Theorem~8.49]{Mdcasiam})
that a function is both \Lnat-convex and \Mnat-convex 
if and only if it is separable convex, 
where a function 
$f:\ZZ\sp{n}\to \Rinf$
is called
{\em separable convex} if it is represented as
\begin{equation} \label{sepcnvdef}
f(x) = \sum_{i=1}\sp{n} \varphi_{i}(x_{i}) \qquad (x\in \ZZ\sp{n})
\end{equation}
with univariate functions
$\varphi_{i}$ $(i=1,2,\ldots ,n)$
that satisfy
$\varphi_{i}(t-1) + \varphi_{i}(t+1)\geq 2 \varphi_{i}(t)$ for all $t\in \ZZ$
and are finite-valued on intervals of integers. 
It is easy to see that a separable convex function 
is both \Lnat-convex and \Mnat-convex, and hence
the content of the above statement lies in the 
claim that the combination of \Lnat-convexity and \Mnat-convexity
implies separable convexity.
In this paper we are interested in such statements
for other classes of discrete convex functions.
In particular, we show in Section~\ref{SCl2natmm} 
that a function is both \Lnat-convex and multimodular
if and only if it is separable convex.

It is noted, however, that such relationship may not be so simple
for other pairs of function classes.
As an example, consider \Mnat -convex functions and multimodular functions.
It is known \cite{MM19multm}
that \Mnat-convex functions and multimodular functions are the same
for functions in two variables.
For functions in more variables, 
this is not the case,
and there is no inclusion relation between the classes of \Mnat-convex functions 
(resp., sets)
and that of multimodular functions
(resp., sets).
Figure~\ref{FGmnatmm} shows examples of integral polymatroids
(\Mnat-convex sets) with and without multimodularity 
(see Examples \ref{EXmnatmmdim3} and \ref{EXmnatNOTmmdim3} for details).
The indicator function of the set in (a) is both \Mnat-convex and multimodular,
but not separable convex.
A systematic study in Section~\ref{SCmnatmm}
will show that there are infinitely many functions (resp., sets) that are 
both \Mnat-convex and multimodular, but not separable convex.

\begin{figure}
\begin{center}
\includegraphics[height=50mm]{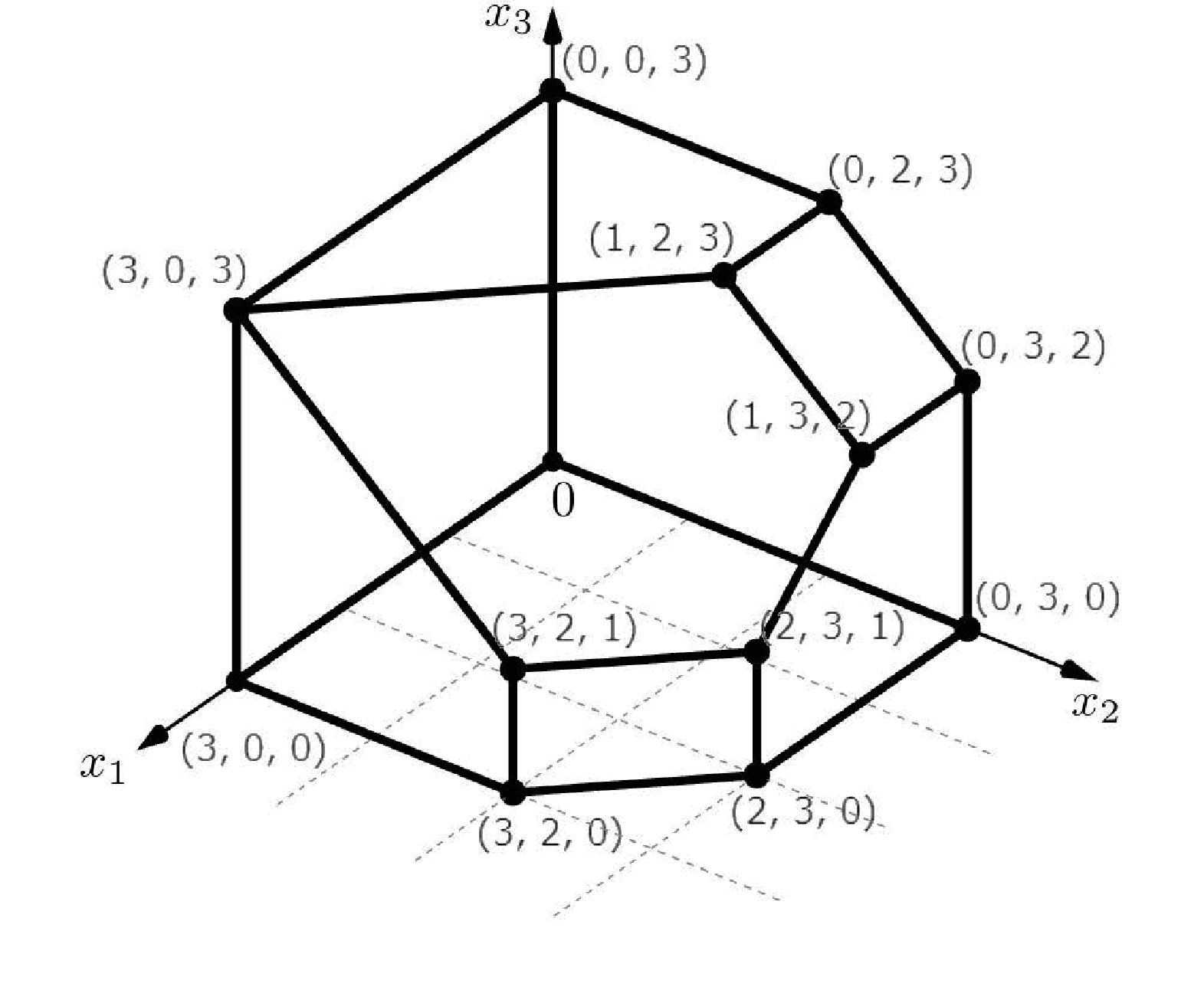}
\qquad
\includegraphics[height=50mm]{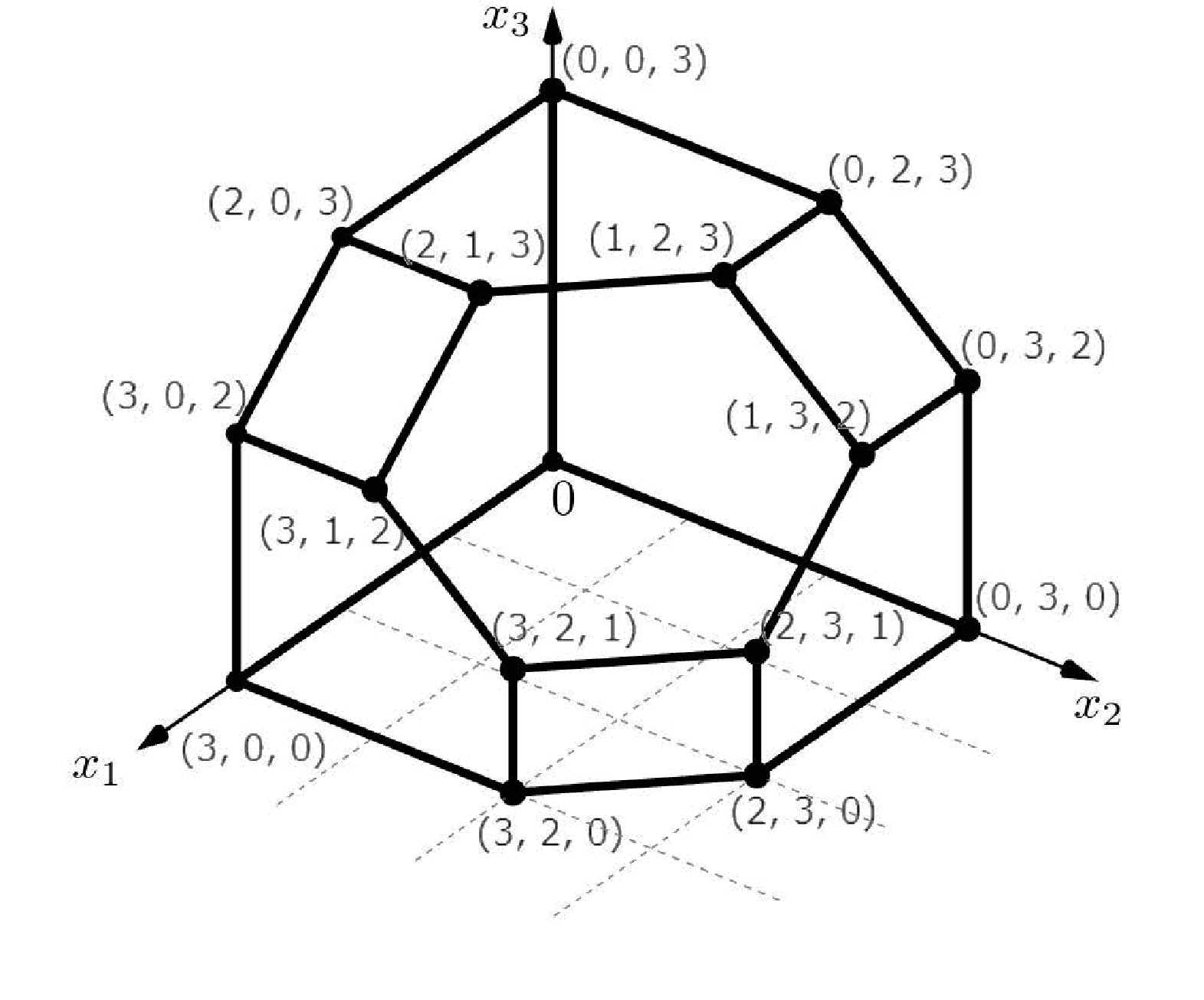}
\\
(a) \Mnat-convex and multimodular
\qquad
(b) \Mnat-convex and not multimodular
\end{center}
\caption{\Mnat-convex sets (polymatroids) with and without multimodularity}
\label{FGmnatmm}
\end{figure}

This paper is organized as follows.
Section~\ref{SCprelim} introduces notations and
summarizes the known pairwise inclusion relations between classes of 
discrete convex functions
and shows how the analysis for functions can be reduced to that for sets.
The main results are given in 
Sections \ref{SCinclude} and \ref{SCinter}
with illustrative examples.
Section~\ref{SCresult} presents technical contributions, containing 
theorems and proofs about multimodularity.
Definitions of various concepts of discrete convex functions
are given in Appendix for the convenience of reference.

\section{Preliminaries}
\label{SCprelim}

\subsection{Notation}
\label{SCnotat}

Let $n$ be a positive integer and 
$N = \{ 1,2,\ldots, n  \}$.
For a subset $A$ of $N$,
we denote 
by $\unitvec{A}$ 
the characteristic vector of $A$;
the $i$th component of $\unitvec{A}$ is equal to 1 or 0 according 
to whether $i \in A$ or not.
We use a short-hand notation $\unitvec{i}$ for $\unitvec{ \{ i \} }$,
which is the $i$th unit vector. 
The vector with all components equal to 1 is denoted by $\vecone$,
that is, $\vecone =(1,1,\ldots, 1) = \unitvec{N}$.
We sometimes use $\unitvec{0}$ to mean the zero vector $\veczero$.

For a vector $x = (x_{1}, x_{2}, \ldots, x_{n})$
and a subset $A$ of $N$,
we use notation $x(A) = \sum_{i \in A} x_{i}$.
For two vectors $x, y \in \RR\sp{n}$,
the vectors of componentwise maximum and minimum of $x$ and $y$
are denoted, respectively, 
by $x \vee y$ and $x \wedge y$,
i.e.,
\begin{equation} \label{veewedgedef}
  (x \vee y)_{i} = \max(x_{i}, y_{i}),
\quad
  (x \wedge y)_{i} = \min(x_{i}, y_{i})
\qquad (i =1,2,\ldots, n).
\end{equation}
For a real number $t \in \RR$, 
$\left\lceil  t   \right\rceil$ 
denotes the smallest integer not smaller than $t$
(rounding-up to the nearest integer)
and $\left\lfloor  t  \right\rfloor$
the largest integer not larger than $t$
(rounding-down to the nearest integer),
and this operation is extended to a vector
by componentwise applications.
That is,
\begin{equation}  \label{ceilfloor}
\left\lceil  x   \right\rceil =
(\lceil x_{1} \rceil, \lceil x_{2} \rceil, \ldots, \lceil x_{n} \rceil),
\qquad
\left\lfloor  x   \right\rfloor =
(\lfloor x_{1} \rfloor, \lfloor x_{2} \rfloor, \ldots, \lfloor x_{n} \rfloor).
\end{equation}
For integer vectors 
$a \in (\ZZ \cup \{ -\infty \})\sp{n}$ and 
$b \in (\ZZ \cup \{ +\infty \})\sp{n}$ 
with $a \leq b$, 
the box of reals and the box of integers 
(discrete rectangle, integer interval)
between $a$ and $b$ 
are denoted by $[a,b]_{\RR}$ and $[a,b]_{\ZZ}$, respectively, i.e.,
\begin{align} 
[a,b]_{\RR} & = \{ x \in \RR\sp{n} \mid a_{i} \leq x_{i} \leq b_{i} \ (i=1,2,\ldots,n)  \},
\label{boxdefR} 
\\
[a,b]_{\ZZ} &= \{ x \in \ZZ\sp{n} \mid a_{i} \leq x_{i} \leq b_{i} \ (i=1,2,\ldots,n)  \}.
\label{boxdefZ} 
\end{align}

We consider functions defined on integer lattice points, 
$f: \ZZ\sp{n} \to \RR \cup \{ +\infty \}$,
where the function may possibly take $+\infty$.
The effective domain of $f$ means the set of $x$
with $f(x) <  +\infty$ and is denoted as 
\begin{equation}  \label{domfdef}
\dom f =   \{ x \in \ZZ\sp{n} \mid  f(x) < +\infty \}.
\end{equation}
We always assume that $\dom f$ is nonempty.
The set of the minimizers of $f$ is denoted by $\argmin f$. 
For a function $f$ and a vector $p$, 
$f[-p]$ denotes the function defined by
\begin{equation}  \label{fpdef}
 f[-p](x) = f(x) - \sum_{i=1}\sp{n} p_{i} x_{i}.
\end{equation}

The convex hull of a set $S$ $(\subseteq \ZZ\sp{n})$ is denoted by $\overline{S}$.
The indicator function of a set $S \subseteq \ZZ\sp{n}$
is denoted by $\delta_S$, which is the function 
$\delta_{S}: \ZZ\sp{n} \to \{ 0, +\infty \}$
defined by
\begin{equation}  \label{indicatordef}
\delta_{S}(x)  =
   \left\{  \begin{array}{ll}
    0            &   (x \in S) ,      \\
   + \infty      &   (x \not\in S) . \\
                      \end{array}  \right.
\end{equation}

\subsection{Classes of discrete convex functions}
\label{SCclassdcf}

In this paper we investigate  
pairwise relations for the following 13 classes of discrete convex functions:
separable convex functions,
integrally convex functions,
L-convex functions,  
\Lnat-convex functions,  
\LL-convex functions, 
\LLnat-convex functions, 
M-convex functions,  
\Mnat-convex functions,  
\MM-convex functions, 
\MMnat-convex functions, 
multimodular functions,
globally discrete midpoint convex functions,
and directed discrete midpoint convex functions.
The separable convex functions are defined by \eqref{sepcnvdef}
in Introduction.
The definitions of other functions 
are given in Appendix \ref{SCdiscfndef},
except that the definition of multimodular functions 
is given in Section~\ref{SCmmfnDef}.

The inclusion relations between these function classes
are summarized in the following proposition,
with Remark \ref{RMfnclassbib} giving the references.
The inclusion relations \eqref{fnfamL}--\eqref{fnfamM2}
are depicted in Fig.~\ref{FGclassVenn}.

\begin{proposition} \label{PRfnclassInc}
The following inclusion relations hold for functions on $\ZZ\sp{n}$:
\begin{align}
&
\{ \mbox{\rm separable conv.} \}  \subsetneqq \ 
\{ \mbox{\rm \Lnat-conv.} \} \subsetneqq \ \{ \mbox{\rm integrally conv.} \},
\label{fnfamL}
\\ &
\{ \mbox{\rm separable conv.} \}  \subsetneqq \ 
\{ \mbox{\rm \Mnat-conv.} \}
\subsetneqq \ \{ \mbox{\rm integrally conv.} \},
\label{fnfamM}
\\ &
\{ \mbox{\rm L-conv.} \}  \subsetneqq \
  \left\{  \begin{array}{l}  \{ \mbox{\rm \Lnat-conv.} \}
                          \\ \{ \mbox{\rm \LL-conv.} \} \end{array}  \right \}
\subsetneqq \  
\{ \mbox{\rm \LLnat-conv.} \} 
\subsetneqq  \ \{ \mbox{\rm integrally conv.} \} ,
\label{fnfamL2}
\\ &
\{ \mbox{\rm M-conv.} \}  \subsetneqq \
  \left\{  \begin{array}{l}  \{ \mbox{\rm \Mnat-conv.} \}
                          \\ \{ \mbox{\rm \MM-conv.} \} \end{array}  \right \}
\subsetneqq \  
\{ \mbox{\rm \MMnat-conv.} \} 
\subsetneqq  \ \{ \mbox{\rm integrally conv.} \},
\label{fnfamM2}
\\ &
\{ \mbox{\rm separable conv.} \}
\subsetneqq \ \{ \mbox{\rm multimodular} \}
\subsetneqq \ \{ \mbox{\rm integrally conv.} \} ,
\label{fnfammultm}
\\ &
\{ \mbox{\rm \Lnat-conv.} \}
\subsetneqq \ \{ \mbox{\rm globally d.m.c.} \}
\subsetneqq \ \{ \mbox{\rm integrally conv.} \},
\label{fnfamdmc}
\\ & 
\{ \mbox{\rm \Lnat-conv.} \}
\subsetneqq \ \{ \mbox{\rm directed d.m.c.} \}
\subsetneqq \ \{ \mbox{\rm integrally conv.} \}.
\label{fnfamddmc}
\end{align}
\vspace{-1.3\baselineskip}\\
\finbox
\end{proposition}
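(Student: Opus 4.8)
\medskip
\noindent\emph{Proof plan.}
The statement is a compendium of pairwise inclusions, so the plan is to dispose of the plain containments first and then concentrate on the strictness assertions. The containments split in two. Those relating a ``basic'' class to its ``$\natural$-'' or ``level-$2$'' relaxation, and those having separable convex as the smaller class, are immediate from --- or follow by a short computation with --- the definitions in Appendix~\ref{SCdiscfndef}: every L-convex function is \Lnat-convex and \LL-convex, every M-convex function is \Mnat-convex and \MM-convex, every \Lnat-convex and every \LL-convex function is \LLnat-convex (and likewise on the M-side of \eqref{fnfamM2}), and every separable convex function is \Lnat-convex, \Mnat-convex, and multimodular. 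The remaining containments are theorems from the literature, which I would simply cite, the pointers being gathered in Remark~\ref{RMfnclassbib}: that \Lnat-convex, \Mnat-convex, \LLnat-convex, \MMnat-convex, multimodular, globally discrete midpoint convex, and directed discrete midpoint convex functions are all integrally convex --- for \Mnat-convexity this is \cite[Theorem~6.42]{Mdcasiam} --- and that every \Lnat-convex function is globally and directed discrete midpoint convex.

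For the strictness I would exhibit small-dimensional witnesses and reuse them. On $\ZZ\sp{1}$ the function $\varphi(t)=t\sp{2}$ is \Lnat-convex and \Mnat-convex --- on $\ZZ\sp{1}$ both notions collapse to ordinary discrete convexity $\varphi(t-1)+\varphi(t+1)\ge 2\varphi(t)$ --- yet it is neither L-convex (an L-convex function on $\ZZ\sp{1}$ is affine) nor M-convex (an M-convex function on $\ZZ\sp{1}$ has a one-point effective domain), so that L-convex $\subsetneqq$ \Lnat-convex and M-convex $\subsetneqq$ \Mnat-convex. On $\ZZ\sp{2}$ I would use $q\sp{+}(x)=(x_{1}+x_{2})\sp{2}$ and $q\sp{-}(x)=(x_{1}-x_{2})\sp{2}$: here $q\sp{-}$ is L-convex, hence \Lnat-convex and integrally convex, but is neither \Mnat-convex nor multimodular --- the exchange inequality defining \Mnat-convexity already fails at $x=(1,1)$, $y=(0,0)$ --- while $q\sp{+}$ is \Mnat-convex and multimodular, because the function $g(y_{1},y_{2})=q\sp{+}(y_{1},y_{2}-y_{1})=y_{2}\sp{2}$ is separable convex, hence \Lnat-convex, so $q\sp{+}$ is multimodular by the correspondence between multimodular and \Lnat-convex functions; on the other hand $q\sp{+}$ is not \Lnat-convex, since it is not submodular. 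As neither $q\sp{+}$ nor $q\sp{-}$ is separable convex, these two witnesses give at once separable convex $\subsetneqq$ \Lnat-convex, separable convex $\subsetneqq$ \Mnat-convex, separable convex $\subsetneqq$ multimodular, \Lnat-convex $\subsetneqq$ integrally convex (via $q\sp{+}$), \Mnat-convex $\subsetneqq$ integrally convex (via $q\sp{-}$), and multimodular $\subsetneqq$ integrally convex (via $q\sp{-}$); and since $q\sp{+}$ violates the global discrete midpoint inequality at $x=(1,0)$, $y=(0,1)$ while staying integrally convex, also globally discrete midpoint convex $\subsetneqq$ integrally convex. The remaining strict inclusions --- L-convex $\subsetneqq$ \LL-convex and M-convex $\subsetneqq$ \MM-convex, the inclusions internal to the \LL/\LLnat\ and \MM/\MMnat\ hierarchies, \LLnat-convex $\subsetneqq$ integrally convex and \MMnat-convex $\subsetneqq$ integrally convex, and \Lnat-convex $\subsetneqq$ globally discrete midpoint convex, \Lnat-convex $\subsetneqq$ directed discrete midpoint convex, directed discrete midpoint convex $\subsetneqq$ integrally convex --- I would settle by the explicit examples developed in Sections~\ref{SCinclude} and \ref{SCinter} and by examples from the discrete midpoint convexity literature cited in Remark~\ref{RMfnclassbib}.

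The main difficulty is organizational rather than technical: the containments form a partial order, strictness propagates along it, and one must check that every claimed strict inclusion is genuinely witnessed by a function lying in the larger class but outside the smaller one --- which takes some care for the \LL- and \MM-families, whose defining inequalities are the least standard. The one genuinely subtle point concerns multimodularity: for $n\ge 3$ neither \Mnat-convexity nor multimodularity implies the other, and the sharper fact that infinitely many functions (and sets) are simultaneously \Mnat-convex and multimodular without being separable convex is exactly what Section~\ref{SCresult} is devoted to; for the present proposition only the one-sided assertions of \eqref{fnfammultm} are needed, and the witnesses $q\sp{+}$, $q\sp{-}$ above cover those.
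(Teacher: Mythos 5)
Your overall route is the same as the paper's: the proposition is stated there without a formal proof, the nontrivial containments being delegated to the literature (collected in Remark~\ref{RMfnclassbib}) and the properness being witnessed by examples (supplied in Section~\ref{SCinclude} and the cited references). Your one-dimensional witness $t\sp{2}$ and the two-dimensional pair $q\sp{+}(x)=(x_{1}+x_{2})\sp{2}$, $q\sp{-}(x)=(x_{1}-x_{2})\sp{2}$ are correct and do the work you assign to them: $q\sp{-}$ is L-convex (submodular and invariant under translation by $\vecone$) but fails the \Mnat-exchange at $(1,1),(0,0)$, and $q\sp{+}$ is multimodular and \Mnat-convex via $g(y)=y_{2}\sp{2}$ but not submodular, hence not \Lnat-convex.

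There is, however, one step that fails as written: the claim that $q\sp{+}$ violates \emph{global} discrete midpoint convexity at $x=(1,0)$, $y=(0,1)$. Global discrete midpoint convexity (see \eqref{midptcnvfn2}) only imposes the inequality on pairs with $\| x-y \|_{\infty}\geq 2$, and here $\| x-y \|_{\infty}=1$, so this pair is simply not constrained and exhibits no violation; your computation shows failure of submodularity, not of global d.m.c. The witness function itself can be salvaged: take for instance $x=(2,0)$ and $y=(-1,3)$, for which $\| x-y\|_{\infty}=3$, $\lceil (x+y)/2\rceil=(1,2)$, $\lfloor (x+y)/2\rfloor=(0,1)$, and $q\sp{+}(x)+q\sp{+}(y)=8<9+1=10$; alternatively, use the paper's own three-variable Example~\ref{EXnorelddmcNOTdmcDiadom2} with $x=(1,0,0)$, $y=(0,1,2)$. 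With that repair (and the remaining strict inclusions for the \LL-, \MM-, and directed d.m.c.\ families filled in by the examples of Section~\ref{SCinclude}, as you indicate), the argument is complete.
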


\begin{figure}\begin{center}
\includegraphics[height=70mm]{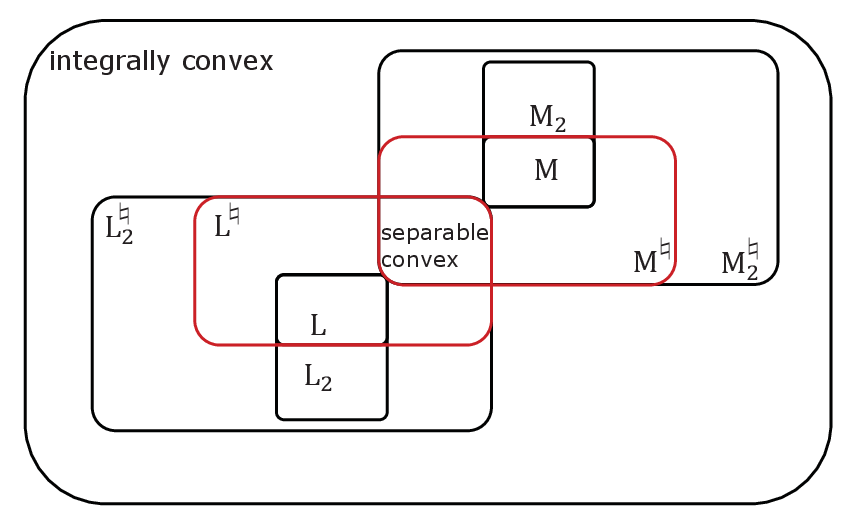} 
\caption{Inclusion relations between classes of discrete convex functions}
\label{FGclassVenn}
\end{center}\end{figure}

\begin{remark} \rm \label{RMfnclassbib}
Here is a supplement to Proposition~\ref{PRfnclassInc}.
Integral convexity is established for 
\Lnat-convex functions in 
\cite[Theorem~7.20]{Mdcasiam},
for \Mnat-convex functions in 
\cite[Theorem~3.9]{MS01rel}
(see also \cite[Theorem~6.42]{Mdcasiam}), and
for globally discrete midpoint convex functions in \cite[Theorem~6]{MMTT20dmc}.
The inclusion relations 
\eqref{fnfamL2} and \eqref{fnfamM2}
concerning \LL-convex and \MM-convex functions
are given in \cite[Section~8.3]{Mdcasiam}.
The integral convexity of multimodular functions in \eqref{fnfammultm} was
pointed out first in \cite[Section~14.6]{Mdcaprimer07},
while this is implicit in the construction of the convex extension given earlier in 
\cite[Theorem~4.3]{Haj85}.
The first inclusion in \eqref{fnfammultm} for multimodular functions 
is given in \cite[Proposition~2]{MM19multm}.
The inclusion relations in \eqref{fnfamdmc} are given in \cite[Theorem~6]{MMTT20dmc},
and those in \eqref{fnfamddmc} are in \cite{TT21ddmc}.
\finbox
\end{remark}

\subsection{Our approach to identify the intersection of two classes}
\label{SCreducefnset}

When two function classes have no inclusion relation between themselves,
we are interested in 
identifying the intersection of these classes.
A typical (known) statement of this kind is that 
a function is both \Lnat-convex and \Mnat-convex 
if and only if it is separable convex.
In this paper (Section~\ref{SCinter}) we are concerned with similar statements:
\begin{equation} \label{ABCfun}
\mbox{A function is both A-convex and B-convex if and only if it is C-convex},
\end{equation}
where 
A-, B-, and C-convex 
denote certain specified discrete convexity
from among those treated in this paper.

In discrete convex analysis, it is generally true that
a set $S$  \ ($\subseteq \ZZ\sp{n}$) 
has a certain discrete convexity 
if and only if 
its indicator function $\delta_S$ 
has the same discrete convexity.
Therefore, the statement \eqref{ABCfun}
implies the corresponding statement for a set:
\begin{equation} \label{ABCset}
\mbox{A set is both A-convex and B-convex if and only if it is C-convex}.
\end{equation}
While the implication ``\eqref{ABCfun} $\Rightarrow$ \eqref{ABCset}''
is obvious, the converse is also true 
under mild assumptions.
In such cases, we can reduce the proof of \eqref{ABCfun} for functions 
to that of \eqref{ABCset} for sets.

As a technical assumption
for the reverse implication 
 ``\eqref{ABCset} $\Rightarrow$ \eqref{ABCfun}''
we introduce the condition 
on a function $f:\ZZ\sp{n} \to \Rinf$ that
\begin{equation} \label{ABCfunAssm}
\mbox{(i) $\dom f$ is bounded or 
(ii) $\overline{\dom f}$ is a polyhedron and $f$ is convex-extensible},
\end{equation}
where, in this paper, we call a function $f$  
{\em convex-extensible}
if it is extensible to a locally polyhedral convex function
in the sense of \cite[Section~15]{Fuj05book}.
For the set version,
we consider the condition on a set $S \subseteq \ZZ\sp{n}$ that 
\begin{equation} \label{ABCsetAssm}
\mbox{(i) $S$ is bounded or 
(ii) $\overline{S}$ is a polyhedron and $S = \overline{S} \cap \ZZ\sp{n}$}.
\end{equation}

For the sake of exposition, let us 
choose separable convexity as ``C-convexity'' in the above generic statement.
Note that a box of integers is 
the set version of a separable convex function.
We use the following fact, 
where
$\argmin f[-p]$ means the set of minimizers of the function
$f[-p]$ defined in \eqref{fpdef}.

\begin{proposition}\label{PRscf-fp}
Under the assumption \eqref{ABCfunAssm},
a function $f:\ZZ\sp{n} \to \Rinf$ is separable convex 
if and only if,
for any vector $p \in \RR\sp{n}$, $\argmin f[-p]$ is a box of integers
or an empty set.
\end{proposition}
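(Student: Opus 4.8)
The plan is to prove both directions by exploiting the separable structure of $f[-p]$ and the characterization of separable convex functions among discrete convex functions. Throughout, observe that $f$ is separable convex if and only if $f[-p]$ is separable convex for some (equivalently every) $p$, since subtracting a linear term preserves the representation \eqref{sepcnvdef} with $\varphi_i$ replaced by $\varphi_i(t) - p_i t$, and the midpoint inequality is unaffected by adding a linear term. This reduces the whole statement to the case $p = \veczero$ modulo a translation argument, which I will use to streamline the ``only if'' direction.

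For the ``only if'' direction, suppose $f(x) = \sum_i \varphi_i(x_i)$ with each $\varphi_i$ convex on an interval of integers (finite-valued there, $+\infty$ outside). Then $f[-p](x) = \sum_i (\varphi_i(x_i) - p_i x_i)$, so minimizing $f[-p]$ decouples completely into $n$ independent one-dimensional problems: $\argmin f[-p] = \prod_{i=1}^n \argmin_t (\varphi_i(t) - p_i t)$. Each factor $\argmin_t (\varphi_i(t) - p_i t)$ is the set of minimizers of a univariate discrete convex function, hence either empty or an integer interval $[a_i, b_i]_{\ZZ}$ (possibly unbounded); since we assume $\dom f \ne \emptyset$ and, under \eqref{ABCfunAssm}, either $\dom f$ is bounded or $f$ is convex-extensible with $\overline{\dom f}$ polyhedral, standard facts of discrete convex analysis guarantee $\argmin f[-p] \ne \emptyset$ for every $p \in \RR^n$ (a bounded effective domain makes this immediate; in case (ii) it follows from the analogous statement for locally polyhedral convex functions). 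Therefore $\argmin f[-p] = \prod_i [a_i, b_i]_{\ZZ} = [a, b]_{\ZZ}$ is a box of integers.

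For the ``if'' direction, assume $\argmin f[-p]$ is a box of integers for every $p \in \RR^n$. First I would reduce to showing $f$ is, say, both \Lnat-convex and \Mnat-convex, and then invoke the known equivalence (\cite[Theorem~3.17]{MS01rel}, \cite[Theorem~8.49]{Mdcasiam}) that the combination of \Lnat- and \Mnat-convexity is separable convexity. However, a cleaner route is available: a convex-extensible function $f$ (which, under \eqref{ABCfunAssm}, $f$ is --- in case (i) a function with bounded nonempty effective domain need not be convex-extensible a priori, so here one must be slightly careful and instead argue directly) has the property that $\overline{\dom f} = \overline{\argmin f[-p] : p \in \RR^n}$ is tiled by the convex hulls of the sets $\argmin f[-p]$, which form the faces of the subdivision of $\RR^n$ dual to the faces of the epigraph. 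If every $\argmin f[-p]$ is a box $[a,b]_{\ZZ}$, then every face of $\overline{\dom f}$ is an axis-parallel box, which forces $\overline{\dom f}$ itself to be an axis-parallel box $B$, and forces the convex extension $\overline{f}$ to be separable on $B$: indeed the gradient map $p \mapsto \argmin f[-p]$ having all cells axis-parallel means the subgradient of $\overline f$ at any point is itself an axis-parallel box, i.e. $\partial \overline{f}(x) = \prod_i \partial \psi_i(x_i)$ for suitable convex $\psi_i$, whence $\overline{f}(x) = \sum_i \psi_i(x_i) + \text{const}$ by integrating along axis directions. Restricting to $\ZZ^n$ gives the separable convex representation of $f$.

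\emph{The main obstacle} I anticipate is the ``if'' direction under case (i) of \eqref{ABCfunAssm}, where $\dom f$ is merely bounded and nonempty: one cannot immediately assume convex-extensibility, so the subgradient/duality argument above does not apply verbatim. The fix will be to argue combinatorially on the finite set $\dom f$ --- for instance, showing that for each coordinate pair $i \ne j$ and each line $\ell$ in the $e^i$--$e^j$ plane, the restriction of $f$ behaves additively, by choosing prices $p$ that isolate two adjacent points and using that $\argmin f[-p]$, being a box, cannot be a two-point diagonal set $\{x, x + e^i - e^j\}$ or $\{x, x + e^i + e^j\}$; ruling out such ``diagonal'' minimizer sets is exactly the discrete analogue of forcing axis-parallel cells, and it yields both the \Lnat- and \Mnat-exchange-type conditions after a short case analysis. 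Verifying that these local conditions assemble into the global separable representation, rather than just pairwise/local additivity, is the delicate bookkeeping step, but it is standard once the diagonal minimizer sets are excluded. I expect the write-up to lean on Proposition~\ref{PRfnclassInc} and the cited separability characterization to shortcut most of this.
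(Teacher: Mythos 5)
The paper itself offers no proof of Proposition~\ref{PRscf-fp} --- it is introduced with ``easy to see'' and closed immediately --- so there is no official argument to match yours against; I can only judge your proposal on its own terms. The overall structure is sound, but two points deserve attention.

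First, in the ``only if'' direction the decoupling $\argmin f[-p]=\prod_i \argmin_t(\varphi_i(t)-p_it)$ is exactly right, but your claim that assumption \eqref{ABCfunAssm} guarantees $\argmin f[-p]\neq\emptyset$ for \emph{every} $p$ is false in case (ii): take $f\equiv 0$ on $\ZZ$ (separable convex, $\overline{\dom f}$ a polyhedron, convex-extensible) and any $p\neq 0$; then $\argmin f[-p]=\emptyset$. This is really an imprecision in the statement itself (the paper's definition \eqref{boxdefZ} of a box is always nonempty), and the honest fix is to read ``box'' as including $\emptyset$ or to restrict to those $p$ with nonempty $\argmin f[-p]$; do not paper over it with a nonexistence-of-empty-argmin claim that is simply wrong.

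Second, and more importantly, for the ``if'' direction you name the right argument and then walk away from it. A box of integers is both an \Lnat-convex set and an \Mnat-convex set; hence if $\argmin f[-p]$ is a box for every $p$, Theorem~\ref{THlf-fp} gives that $f$ is \Lnat-convex and Theorem~\ref{THmf-fp} gives that $f$ is \Mnat-convex --- both theorems are stated under precisely the assumption \eqref{ABCfunAssm}, so cases (i) and (ii) are handled uniformly --- and then the cited equivalence $\{\mbox{\Lnat-convex}\}\cap\{\mbox{\Mnat-convex}\}=\{\mbox{separable convex}\}$ from \cite[Theorem~3.17]{MS01rel} finishes the proof. There is no circularity, since that characterization is imported from the literature rather than derived from this proposition. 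This route makes the subgradient-tiling argument and the combinatorial patch for case (i) --- the two places where your write-up becomes vague and admits gaps --- entirely unnecessary.
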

\begin{proof}
See Remark~\ref{RMscf-fp} at the end of Section~\ref{SCreducefnset}.
\end{proof}

For ``A-convexity'' and ``B-convexity'' in the generic statement,
we assume the following natural properties:
\begin{align}
& \mbox{%
If a function $f$ is A-convex (resp., B-convex), then $f$ satisfies \eqref{ABCfunAssm}
}
\nonumber \\ & 
\mbox{and, for any $p \in \RR\sp{n}$, $\argmin f[-p]$ is A-convex  (resp., B-convex)}.
\label{cnvABargminfp}
\end{align}
By Proposition~\ref{PRscf-fp} (only-if part), separable convexity has this property.
Moreover, it is known
that all kinds of discrete convexity treated 
in this paper have this property.

\begin{proposition}\label{PRscf-box}
Under the assumptions \eqref{ABCfunAssm}--\eqref{cnvABargminfp},
the following two statements are equivalent:
\\
{\rm (1)} 
A set which is both A-convex and B-convex is a box of integers.
\\
{\rm (2)} 
A function which is both A-convex and B-convex is a separable convex function.
\end{proposition}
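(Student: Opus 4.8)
The plan is to deduce both implications directly from Proposition~\ref{PRscf-fp}, the stipulated property \eqref{cnvABargminfp}, and the standard principle that a set $S\subseteq\ZZ\sp{n}$ possesses a given discrete convexity if and only if its indicator function $\delta_{S}$ possesses the same discrete convexity. The argument is short precisely because the analytic substance has already been isolated in Proposition~\ref{PRscf-fp}; the role of this proposition is to convert ``separable convex'' into the pointwise condition ``every $\argmin f[-p]$ is a box of integers''.

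For the implication $(2)\Rightarrow(1)$, I would start from a set $S$ that is both A-convex and B-convex, observe that then $\delta_{S}$ is both A-convex and B-convex, and invoke (2) to conclude that $\delta_{S}$ is separable convex. Writing $\delta_{S}(x)=\sum_{i=1}\sp{n}\varphi_{i}(x_{i})$ as in \eqref{sepcnvdef}, each $\varphi_{i}$ is finite-valued on an integer interval, so the effective domain of $\delta_{S}$ is the product of these intervals; since $\dom\delta_{S}=S$, this forces $S$ to be a box of integers. (Alternatively, one may apply Proposition~\ref{PRscf-fp} to $\delta_{S}$ with $p=\veczero$, using $\argmin\delta_{S}=S$ and the fact that $\delta_{S}$ satisfies \eqref{ABCfunAssm} by \eqref{cnvABargminfp}.)

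For the implication $(1)\Rightarrow(2)$, take $f:\ZZ\sp{n}\to\Rinf$ that is both A-convex and B-convex. By the first clause of \eqref{cnvABargminfp}, $f$ satisfies \eqref{ABCfunAssm}, so Proposition~\ref{PRscf-fp} is applicable to $f$. For an arbitrary $p\in\RR\sp{n}$, the second clause of \eqref{cnvABargminfp} gives that $\argmin f[-p]$ is both A-convex and B-convex as a set, whence by (1) it is a box of integers. Since this holds for every $p\in\RR\sp{n}$, Proposition~\ref{PRscf-fp} yields that $f$ is separable convex.

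The individual steps are routine; the two points that call for care are (a) that the equivalence between discrete convexity of a set and discrete convexity of its indicator function genuinely holds for the particular notions chosen as A and B, and (b) that $\argmin f[-p]$ is a set to which statement (1) can be meaningfully applied---in particular that it is nonempty in the relevant cases---which is exactly what the dichotomy in \eqref{ABCfunAssm} (a bounded effective domain, versus a polyhedral $\overline{\dom f}$ together with convex-extensibility) is designed to secure. I expect (b), namely the bookkeeping around unbounded effective domains and the existence of minimizers, to be the only genuinely delicate matter, and it is entirely absorbed into the hypotheses \eqref{ABCfunAssm}--\eqref{cnvABargminfp} and into Proposition~\ref{PRscf-fp}, so no extra work should be needed here.
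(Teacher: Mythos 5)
Your proof is correct and follows essentially the same route as the paper: the direction $(2)\Rightarrow(1)$ is obtained by applying (2) to the indicator function $\delta_{S}$, and the direction $(1)\Rightarrow(2)$ uses \eqref{cnvABargminfp} to see that each $\argmin f[-p]$ is both A-convex and B-convex, hence a box by (1), and then invokes Proposition~\ref{PRscf-fp}. The paper's proof is just a terser version of the same argument.
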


\begin{proof}
The implication (2) $\Longrightarrow$ (1) is obvious, as 
(1) for a set $S$ follows from  (2) for its indicator function $\delta_{S}$.
The converse, (1) $\Longrightarrow$ (2), can be shown as follows.
Let $f$ be both A-convex and B-convex.
By the assumption \eqref{cnvABargminfp},
the set $S = \argmin f[-p]$ is both A-convex and B-convex
for each $p \in \RR\sp{n}$.
Then, by (1),
$\argmin f[-p]$ is a box for each $p \in \RR\sp{n}$.
It follows from this and Proposition~\ref{PRscf-fp} that $f$ is separable convex.
\end{proof}

Similarly, we can obtain the following propositions,
where ``C-convexity'' in the generic statement
is replaced by {\rm L}-convexity and {\rm M}-convexity, respectively.

\begin{proposition}\label{PRlf-lset}
Under the assumptions \eqref{ABCfunAssm}--\eqref{cnvABargminfp},
the following two statements are equivalent:
\\
{\rm (1)} 
A set which is both A-convex and B-convex is an L-convex set.
\\
{\rm (2)} 
A function which is both A-convex and B-convex is an L-convex function. 
\end{proposition}
\begin{proof}
The proof is the same as that of Proposition~\ref{PRscf-box},
except that Proposition~\ref{PRscf-fp} is replaced by Theorem~\ref{THlf-fp}.
\end{proof}

\begin{proposition}\label{PRmf-mset}
Under the assumptions \eqref{ABCfunAssm}--\eqref{cnvABargminfp},
the following two statements are equivalent:
\\
{\rm (1)} 
A set which is both A-convex and B-convex is an M-convex set.
\\
{\rm (2)} 
A function which is both A-convex and B-convex is an M-convex function.
\end{proposition}
\begin{proof}
The proof is the same as that of Proposition~\ref{PRscf-box},
except that Proposition~\ref{PRscf-fp} is replaced by Theorem~\ref{THmf-fp}.
\end{proof}

Proposition~\ref{PRscf-box} 
will be used in the proof of Theorem~\ref{THl2natmmB},
and 
Propositions \ref{PRlf-lset} and \ref{PRmf-mset}
will be used in the proofs of Theorems \ref{THl2lnatL} and \ref{THm2mnatM}, 
respectively.

\begin{remark} \rm \label{RMscf-fp}
A proof of Proposition~\ref{PRscf-fp} is outlined here for completeness.
Suppose that $f$ is a separable convex function represented as
$f(x) = \sum_{i=1}\sp{n} \varphi_{i}(x_{i})$
in \eqref{sepcnvdef}. 
Then
$f[-p](x) = \sum_{i=1}\sp{n} \varphi_{i}[-p_{i}](x_{i})$,
from which $x$ is a minimizer of $f[-p]$ 
if and only if, 
for each $i$, 
$x_{i}$ is a minimizer of $\varphi_{i}[-p_{i}]$. 
Therefore
$\argmin f[-p]$ is a box $[a, b]_{\ZZ}$,
where $a$ and $b$ are the vectors whose components are defined by
$[a_{i}, b_{i}]_{\ZZ} = \argmin \varphi_{i}[-p_{i}]$
$(i=1,2,\ldots, n)$.
To show the converse under \eqref{ABCfunAssm}, suppose that
$\argmin f[-p]$ is a box of integers for any $p \in \RR\sp{n}$. 

First we consider the case (ii) where
$\overline{\dom f}$ is a polyhedron and $f$ is convex-extensible.
Let $\overline{f}$ denote the convex extension of $f$.
We use notation 
$S_{p} := \argmin f[-p]$.
By the assumption, 
$S_{p}$ is a box of integers 
and 
$\overline{S_{p}} = \overline{\argmin f[-p]} =  \argmin \overline{f}[-p]$
is a box of reals.
We have
$\overline{\dom f} = \bigcup_{p} \overline{S_{p}}$,
which implies that 
$\overline{\dom f}$ is a box of reals and hence
$\dom f$ is a box of integers.
Fix an arbitrary $z \in \dom f$.
For $i=1,2,\ldots,n$, define 
$\varphi_{i}: \ZZ \to \Rinf$
by
$\varphi_{i}(t) = f(z_{1}, \ldots, z_{i-1}, t, z_{i+1}, \ldots, z_{n})$
for $t \in \ZZ$.
By the assumed convex-extensibility of $f$, we have that
$\dom \varphi_{i}$ is an interval of integers and 
$\varphi_{i}(t-1) + \varphi_{i}(t+1)\geq 2 \varphi_{i}(t)$ for all $t\in \ZZ$.
Using these univariate functions we obtain the desired expression
$f(x) = \sum_{i=1}\sp{n} \varphi_{i}(x_{i})$.

Next, we turn to the other case (i) where $\dom f$ is assumed to be bounded.
Let $\overline{f}$ denote the convex envelope of $f$,
by which we mean that 
$\overline{f}(x)$ is equal to the maximum of 
$g(x)$ taken over all closed convex functions $g$ 
satisfying $g(y) \leq f(y)$ for all $y \in \dom f$.
Since $\dom f$ is bounded,
$\overline{f}$ is a polyhedral convex function
and 
$\dom \overline{f} = \overline{\dom f}$
is a polyhedron.
To show $\overline{f}=f$ (convex-extensibility of $f$) by contradiction,
suppose that $\overline{f}(y) < f(y)$ for some $y \in \dom f$.
Let $p$ be a subgradient of $\overline{f}$ at $y$,
which is equivalent to saying that
$y \in \argmin \overline{f}[-p]$.
Since
$\argmin \overline{f}[-p] = \overline{\argmin f[-p]}$,
this implies 
$y \in \overline{\argmin f[-p]}$,
whereas
$y \notin \argmin f[-p]$ by $\overline{f}(y) < f(y)$.
This is a contradiction to the assumption that 
$\argmin f[-p]$ is a box of integers.
Hence $f$ is convex-extensible, and 
the proof in the case of (i) is reduced to that in the case of (ii).
\finbox
\end{remark}

\section{Inclusion Relations Between Convexity Classes}
\label{SCinclude}

In Section~\ref{SCclassdcf}
we have presented basic inclusion relations
for fundamental classes of discrete convex functions
(Proposition~\ref{PRfnclassInc} and Fig.~\ref{FGclassVenn}).
The objective of this section is to offer  supplementary facts.
In particular, we present examples to demonstrate
proper inclusion ($\subsetneqq$),
and cover (globally, directed) discrete midpoint convex functions.
The relation of multimodularity to \Lnat- and \Mnat-convexity
will be investigated later in Sections \ref{SCl2natmm} and \ref{SCmnatmm}.

\begin{table} 
\begin{center}
\caption{Relations between classes of discrete convex functions}
\label{TBinclinter}
\medskip
\addtolength{\tabcolsep}{-2pt}

\centering  
\small
    \begin{tabular}{|l|l|l||l|l|l|l||l|l|l|l||l|l|l|l|}
    \hline
& sep & int-c  & L & \LL  & \Lnat  & \LLnat  & M & \MM  & \Mnat  & \MMnat  & m.m. & g-dmc & d-dmc 
\\ \hline
sep & = & $\subset$ & $\triangledown$ & $\triangledown$ & $\subset$ & $\subset$ 
   & $\triangledown$ & $\triangledown$ & $\subset$ & $\subset$ 
   & $\subset$ $^{\circ}$ & $\subset$  & $\subset$ 
\\
 &  &  & lin & lin &  &  & point & point &  &  &  &   &  
\\ \hline
int-c &  & = & $\supset$ $^{\circ}$ & $\supset$ $^{\circ}$ & $\supset$ $^{\circ}$ & $\supset$ $^{\circ}$ 
      & $\supset$ $^{\circ}$  & $\supset$ $^{\circ}$ & $\supset$ $^{\circ}$ & $\supset$ $^{\circ}$ 
      & $\supset$ $^{\circ}$ & $\supset$ $^{\circ}$ & $\supset$ $^{\circ}$ 
 \\
  &  &  &  &  &  &  &  &  &  &  &  &   &  
\\ \hline\hline
L &  &  & = & $\subset$ & $\subset$ & $\subset$ & $\triangledown$ & $\triangledown$ 
   & $\triangledown$ & $\triangledown$ & $\triangledown$ & $\subset$  & $\subset$ 
\\
 &  &  &  & &  &  & none & none & lin & lin & lin &   &  
\\ \hline
\LL  &  &  &  & = & $\triangledown$ $^*$  & $\subset$ 
   & $\triangledown$ & $\triangledown$  & $\triangledown$ & $\triangledown$ 
   & $\triangledown$ & $\triangledown$ $^*$  & $\triangledown$  
\\
  &  &  &  &  & L &  & none & none & lin & lin & lin & $\supseteq$ L & $\supseteq$ L 
\\ \hline
\Lnat  &  &  &  &  & = & $\subset$ 
   & $\triangledown$ & $\triangledown$  & $\triangle$ $^{\circ}$  & $\triangle$ 
   & $\triangle$ $^*$ & $\subset$ $^{\circ}$ & $\subset$ $^{\circ}$ 
\\
  &  &  &  &  &  &  & point  & point  & sep & sep & sep &   &  
\\ \hline
\LLnat  &  &  &  &  &  & = 
   & $\triangledown$ & $\triangledown$  & $\triangle$ & $\triangle$ $^{\circ}$ 
   & $\triangle$ $^*$ &  $\triangle$ $^*$  &  $\triangle$ $^{\circ}$ 
\\
  &  &  &  &  &  &  & point & point  & sep & sep & sep &  $\supseteq$ \Lnat & $\supseteq$ \Lnat
\\ \hline\hline
M &  &  &  &  &  &  & = & $\subset$ & $\subset$ & $\subset$ & 
$\triangledown$ $^*$ & $\triangledown$ $^*$  & $\triangledown$ $^*$ 
\\
&  &  &  &  &  &  &  &  &  &  &  &  & 
\\ \hline
\MM  &  &  &  &  &  &  &  & = & $\triangledown$ $^*$ & $\subset$ & $\triangledown$ $^*$ 
   & $\triangledown$ $^*$  & $\triangledown$ $^*$ 
\\
  &  &  &  &  &  &  &  &  & M &  &  &  &  
\\ \hline
\Mnat  &  &  &  &  &  &  &  &  & = & $\subset$ & $\triangle$ $^*$ & $\triangle$ $^*$  
  & $\triangle$ $^*$ 
\\
  &  &  &  &  &  &  &  &  &  &  &  $\supsetneqq$ sep & $\supsetneqq$ sep & $\supsetneqq$ sep
\\ \hline
\MMnat  &  &  &  &  &  &  &  &  &  & = & $\triangle$ $^*$ & $\triangle$ $^*$  & $\triangle$ $^*$ 
\\
  &  &  &  &  &  &  &  &  &  &  &  $\supsetneqq$ sep  & $\supsetneqq$ sep & $\supsetneqq$ sep 
\\ \hline\hline
m.m. &  &  &  &  &  &  &  &  &  &  & = & $\triangle$ $^*$  & $\triangle$ $^*$ 
\\
 &  &  &  &  &  &  &  &  &  &  &  & $\supsetneqq$ sep & $\supsetneqq$ sep
\\ \hline
g-dmc &  &  &  &  &  &  &  &  &  &  &  & =  &$\triangle$   
\\
 &  &  &  &  &  &  &  &  &  &  &  &   &  $\supsetneqq$ \Lnat  
\\ \hline
d-dmc &  &  &  &  &  &  &  &  &  &  &  &   & = 
 \\
 &  &  &  &  &  &  &  &  &  &  &  &   & 
\\ \hline
\end{tabular}
\addtolength{\tabcolsep}{2pt}
\end{center}
sep = separable convex, \ 
int-c = integrally convex, \
m.m. = multimodular. \
\\
g-dmc=globally discrete midpoint convex, \ 
d-dmc=directed discrete midpoint convex.
\\
$\subset$: 
The class of the row is properly contained ($\subset$) by the class of the column.
\\
$\supset$: 
The class of the row properly contains ($\supset$) the class of the column.
\\
$\triangle$: 
There is no inclusion relation between the classes of the row and the column,
and their intersection includes all separable convex functions.
\\
$\triangledown$: 
There is no inclusion relation between the classes of the row and the column,
and their intersection does 
not include the set of separable convex functions.
\\
lin: linear function on $\ZZ\sp{n}$,  \ 
point: function defined on a single point. 
\\
$^{*}$: results of this paper, 
$^{\circ}$: results in previous studies,
Unmarked: easy observations.
\end{table}

Table~\ref{TBinclinter} is a summary of the inclusion and intersection relations 
for various classes of discrete convex functions.
In each row, corresponding to a class of discrete convex functions, 
the first (upper) line shows its inclusion relation 
by $\subset$, $\supset$, $\triangle$, and $\triangledown$.
The symbol $\subset$ (resp., $\supset$) means that 
the class of the row is properly contained in
(resp., properly contains) the class of the column.
The symbols $\triangle$ and $\triangledown$ indicate that 
there is no inclusion relation between the classes of the row and the column;
$\triangle$ or $\triangledown$ is used depending on 
whether their intersection includes all separable convex functions
or not.
The second (lower) line of each row shows what their intersection is,
to be treated in Section~\ref{SCinter}.

A set $S$ is always assumed to be a subset of $\ZZ\sp{n}$ and a function $f$ 
is defined on $\ZZ\sp{n}$, that is, $f:\ZZ\sp{n}\to \Rinf$.
We can demonstrate that a certain class of discrete convex functions,
say, A-convex functions
is not contained in another class of discrete convex functions,
say, B-convex functions
by exhibiting a set $S$ 
that is A-convex and not B-convex.
Note that, in this case, 
the indicator function $\delta_{S}$
is an A-convex function which is not B-convex.

\subsection{L-convexity, M-convexity, and multimodularity}
\label{SCnorelLMmm}

The following examples are concerned with 
L-convexity,  M-convexity, and multimodularity (m.m.).
For function classes $A, B, C$ with $\{ A, B, C \} = \{ \mbox{L}, \mbox{M}, \mbox{m.m.} \}$,
we have $A \setminus (B \cup C) \ne \emptyset$.%

\begin{example}  \rm  \label{EXnorelLnotMmm}
Let
$S = \{ x \in \ZZ\sp{3}  \mid x= t (1,1,1), t \in \ZZ \}$.
This set is L-convex (hence \Lnat-convex),
but it is neither \Mnat-convex nor multimodular.
Multimodularity of $S$
is denied in Example~\ref{EXnorelLnotMmm-2} in Section \ref{SCmmfnDef}.
\finbox
\end{example}

\begin{example}  \rm  \label{EXnorelmmnotLM}
Let
$S = \{ x \in \ZZ\sp{3}  \mid x= t (1,-1,1), t \in \ZZ \}$.
This set is multimodular but it is neither \Lnat-convex nor \Mnat-convex.
Multimodularity of $S$ will be 
demonstrated in Example~\ref{EXnorelmmnotLM-2} in Section \ref{SCmmpolydesc}.
\finbox
\end{example}

\begin{example}  \rm  \label{EXnorelMnatnotLmm}
Let
\begin{equation} 
S = 
\{ x\in \ZZ\sp{3} \mid 0\leq x_i\leq 3~(i=1,2,3),
\ x(X) \leq 5 \ (|X|=2), \ x_{1}+x_{2}+x_{3} \leq 6 \},
\label{dim3Mnotmm}
\end{equation}
which is depicted in Fig.~\ref{FGmnatmm}(b).
This set is \Mnat-convex,
but it is neither \Lnat-convex nor multimodular.
For $x=(1,2,3)$ and $y=(2,2,2)$,
we have
$(x+y)/2 = (3/2,2,5/2)$ and
$\lceil (x+y)/2  \rceil = (2,2,3) \notin S$.
Hence $S$ is not \Lnat-convex.
Multimodularity of $S$
is denied in Example~\ref{EXmnatNOTmmdim3} in Section \ref{SCmnatmmth}.
\finbox
\end{example}

\begin{example}  \rm  \label{EXnorelMnotLmmdim4}
From Example~\ref{EXnorelMnatnotLmm},
we can make an example of an M-convex set that is neither \Lnat-convex nor multimodular.
Using $S$ in \eqref{dim3Mnotmm}, define
\begin{equation*} 
\hat S = \{ x \in \ZZ\sp{4} \mid 
(x_{1},x_{2},x_{3}) \in S, \ x_{4}= 6 -  (x_{1}+x_{2}+x_{3}) \},
\end{equation*}
which is indeed M-convex by \eqref{mfnmnatfnrelationvex}.
This set is not \Lnat-convex
since discrete midpoint convexity fails for 
$x=(1,2,3,0)$ and $y=(2,2,2,0)$.
Multimodularity of $\hat S$
is denied in Example~\ref{EXexmnatNOTmmdim4} in Section \ref{SCmnatmmth}.
\finbox
\end{example}

\begin{remark} \rm \label{RMmmMdim23}
When $n=2$,
every \Mnat-convex function is multimodular, and the converse is also true
\cite[Remark 2.2]{MM19multm}. 
When $n=3$, every M-convex function is multimodular
(Proposition~\ref{PRmmMdim3} in Section~\ref{SCmnatmmth}), but the converse is not true.
\finbox
\end{remark}

\subsection{Global and directed discrete midpoint convexity}
\label{SCnorelgddmc}

The following examples show that 
there is no inclusion relation between
global d.m.c.~and directed d.m.c., 
that is,
$A \setminus B \ne \emptyset$
when $\{ A, B \} = \{ \mbox{g-dmc}, \mbox{d-dmc} \}$.

\begin{example}[{\cite[Example~3]{TT21ddmc}}]  \rm  \label{EXnoreldmcNOTddmc}
Let
\[
S=\{(0,0,0),(1,1,0),(1,0,-1),(2,1,-1)\}.
\]
This set is globally d.m.c.~but not directed d.m.c.
Indeed, $x=(0,0,0)$ and $y=(2,1,-1)$
are the only pair with $\| x-y \|_\infty \geq 2$.
We have
$(x+y)/2 = (1,1/2,-1/2)$,
$\lceil (x+y)/2  \rceil = (1,1,0) \in S$ and
$\lfloor (x+y)/2  \rfloor = (1,0,-1) \in S$.
On the other hand, 
using notation $\tilde \mu$ defined in \eqref{ddmcrounddefInt} and \eqref{ddmcrounddefVec},
we have
$\tilde \mu(x,y)=(1,0,0)\notin S$ and
$\tilde \mu(y,x)=(1,1,-1)\notin S$.
Hence $S$ is not directed d.m.c.
\finbox
\end{example}

\begin{example}[{\cite[Example~3]{TT21ddmc}}]  \rm  \label{EXnorelddmcNOTdmc}
Let
\[
S=\{(0,0,0),(1,0,0),(1,1,1),(2,1,1),(1,1,-1), (2, 1,-1), (1, 1, 0), (2, 1, 0) \}.
\]
This set is directed d.m.c.~but not globally d.m.c.
For $x=(0,0,0)$ and $y=(2,1,-1)$
with $\| x-y \|_\infty \geq 2$,
we have
$(x+y)/2 = (1,1/2,-1/2)$ and
$\lfloor (x+y)/2  \rfloor = (1,0,-1) \notin S$.
Hence $S$ is not globally d.m.c.
\finbox
\end{example}

\begin{example}  \rm  \label{EXnorelddmcNOTdmcDiadom1}
Let
\[
 f(x_{1},x_{2},x_{3}) = (x_{1} - x_{2})^2 + (x_{1} + x_{3})^2 + (x_{2} + x_{3})^2.
\]
This function is not globally d.m.c.
Indeed, for $x = (0,0,0)$ and $y = (2,1,-1)$ 
with $\| x-y \|_\infty \geq 2$,
we have
$(x+y)/2 = (1,1/2,-1/2)$,
$u:=\lceil (x+y)/2  \rceil = (1,1,0)$, and
$v:=\lfloor (x+y)/2  \rfloor = (1,0,-1)$.
Then
\[
f(x) + f(y) = (0 + 0 + 0) + (1 + 1 + 0)  < f(u) + f(v) = (0+1+1) + (1+0+1),
\]
which is a violation of discrete midpoint convexity.
This function $f$ is a quadratic function
represented as $f(x) = x\sp{\top} Q x$ with a diagonally dominant matrix
{\small $Q = 
\left[ \begin{array}{rrr}
2 & -1 & 1  \\
-1 & 2 & 1  \\
1 & 1 & 2 \\
\end{array}\right]$}.
It is known 
\cite[Theorem~9]{TT21ddmc}
that a quadratic function
defined by a diagonally dominant matrix with nonnegative diagonal elements
is directed d.m.c.
More generally, 2-separable convex functions are directed d.m.c.
\cite[Theorem~4]{TT21ddmc}.
\finbox
\end{example}

\begin{example}  \rm  \label{EXnorelddmcNOTdmcDiadom2}
Let
\[
 f(x_{1},x_{2},x_{3}) = (x_{1} + x_{2})^2  = 
[ \begin{array}{ccc}
x_{1} & x_{2} & x_{3}
\end{array} ]
\left[ \begin{array}{rrr}
1 & 1 & 0  \\
1 & 1 & 0  \\
0 & 0 & 0 \\
\end{array}\right]
\left[ \begin{array}{c}
x_{1} \\ x_{2} \\ x_{3}
\end{array}\right].
\]
This function is directed d.m.c. (by diagonal dominance)
but not globally d.m.c.
Indeed, for
$x = (1,0,0)$ and $y = (0,1,2)$, we have
$u:= \lceil (x+y)/2  \rceil = (1,1,1)$,
$v := \lfloor (x+y)/2  \rfloor = (0,0,1)$,
and
\[
f(x) + f(y) = 1 + 1  < f(u) + f(v) = 4 + 0,
\]
which is a violation of discrete midpoint convexity.
\finbox
\end{example}

\subsection{\LL-convexity and discrete midpoint convexity}
\label{SCnorelL2dmc}

The following examples show that 
there is no inclusion relation between
\LL-convexity and (global, directed) discrete midpoint convexity, 
that is,
$A \setminus B \ne \emptyset$
when $\{ A, B \} = \{ \mbox{\LL}, \mbox{(g-, d-)dmc} \}$.

\begin{example} [{\cite[Remark~2]{MMTT20dmc}, \cite[Example~2]{TT21ddmc}}] \rm \label{EXnorelLnnotdmc}
Let
\[ 
S=\{(0,0,0),(0,1,1),(1,1,0),(1,2,1)\}.
\]
This set is \LLnat-convex, since $S = S_{1} + S_{2}$ for two \Lnat-convex sets
$S_{1}=\{(0,0,0),(0,1,1)\}$ and $S_{2}=\{(0,0,0),(1,1,0)\}$.
The set $S$ is not (globally, directed) d.m.c.,
since, for $x=(0,0,0)$ and $y=(1,2,1)$
with $x \leq y$ and
$\| x-y \|_\infty \geq 2$,
we have
$(x+y)/2 = (1/2,1, 1/2)$, 
$\lceil (x+y)/2  \rceil = (1,1,1) \notin S$
and
$\lfloor (x+y)/2  \rfloor = (0,1, 0) \notin S$.
From this \LLnat-convex set $S$, 
we define a set $T \subseteq \ZZ\sp{4}$ by
\[ 
 T = \{(x,0) + \mu (\vecone,1) \mid x \in S, \ \mu \in \ZZ \},
\] 
which is \LL-convex and not (globally, directed) d.m.c.
\finbox
\end{example}

\begin{example}  \rm  \label{EXnoreldmcnotL2b}
Let
\begin{align*} 
S &= \{ x \in \ZZ\sp{4}\mid x_{1} + x_{2} - x_{3} - x_{4} =0 \},
\end{align*}
which is \LL-convex, since $S = S_{1} + S_{2}$ for two L-convex sets
\begin{align*} 
S_{1} &= \{ x \in \ZZ\sp{4}\mid x_{1} = x_{3}, \  x_{2} =x_{4}  \},
\nonumber \\ 
S_{2} &= \{ x \in \ZZ\sp{4}\mid x_{1} = x_{4}, \  x_{2} =x_{3}  \}.
\end{align*}
The set $S$ is not (globally, directed) d.m.c.,
since, for $x=(0,0,0,0)$ and $y=(2,2,1,3)$
with $x \leq y$ and
$\| x-y \|_\infty \geq 2$,
we have
$(x+y)/2 = (1,1,1/2,3/2)$, 
$\lceil (x+y)/2  \rceil = (1,1,1,2) \notin S$
and
$\lfloor (x+y)/2  \rfloor = (1,1,0,1) \notin S$.
\finbox
\end{example}

\begin{example}  \rm  \label{EXnorelL2nnotddmc}
The set $S=\{(1,0),(0,1)\}$
is (globally, directed) d.m.c.
but not \LLnat-convex (hence not \LL-convex).
\finbox
\end{example}

\subsection{\MM-convexity and discrete midpoint convexity}
\label{SCnorelM2dmc}

The following examples show that 
there is no inclusion relation between
\MM-convexity and (global, directed) discrete midpoint convexity, 
that is, $A \setminus B \ne \emptyset$
when $\{ A, B \} = \{ \mbox{\MM}, \mbox{(g-, d-)dmc} \}$.

\begin{example}  \rm  \label{EXnorelMnotdmc}
Let $S$ be the set of integer points on the maximal face in Fig.~\ref{FGmnatmm}(a),
that is,
\begin{align}
S & = 
\{ x\in \ZZ^3 \mid  x_i\leq 3~(i=1,2,3), \ 
x_{1}+x_{2}\leq 5, \ x_{2}+x_{3}\leq 5, \ x_{1}+x_{2}+x_{3} = 6 \}
\nonumber \\ &
= \{ (3,0,3), (1,2,3), (3,2,1), (1,3,2), (2,3,1),   (2,1,3), (2,2,2) ,(3,1,2) \}.
\label{mnatmmSmax}
\end{align}
This set $S$ is M-convex and hence \MM-convex.
However, it is not (globally, directed) d.m.c.
For $x=(3,0,3)$ and $y=(2,2,2)$
with $\| x-y \|_\infty \geq 2$,
we have
$(x+y)/2 = (5/2,1,5/2)$,
$\lceil (x+y)/2  \rceil = 
\tilde \mu(x,y) =(3,1,3) \notin S$ and
$\lfloor (x+y)/2  \rfloor =
\tilde \mu(y,x) =(2,1,2) \notin S$
using notation $\tilde \mu$
 defined in \eqref{ddmcrounddefInt} and \eqref{ddmcrounddefVec}.  
Hence $S$ is neither globally d.m.c. nor directed d.m.c.
\finbox
\end{example}

\begin{example}  \rm  \label{EXnoreldmcnotM2}
The set $S=\{ (0,0), (1,1) \}$
is (globally, directed) d.m.c.~but not \MMnat-convex
(hence not \Mnat-convex).
\finbox
\end{example}

\section{Intersection of Two Convexity Classes}
\label{SCinter}

The following are the major findings of this paper
concerning the intersection of two convexity classes
of functions $f:\ZZ\sp{n} \to \Rinf$.
\begin{itemize}
\item 
A function which is both \Lnat -convex and multimodular is separable convex
(Theorem~\ref{THlnatmmB}).

\item 
Furthermore, a function which is both \LLnat -convex and multimodular is separable convex
(Theorem~\ref{THl2natmmB}).

\item 
A function which is both \Mnat-convex and multimodular is not necessarily separable convex
(Section~\ref{SCmnatmm}).

\end{itemize}

For the ease of reference we summarize remarkable relations below:
\begin{align}
 &
 \{ \mbox{\rm \LL-convex} \} \cap  \{ \mbox{\rm \Lnat-convex} \}
= \{ \mbox{\rm L-convex} \} ,
\label{fnfamsepL2Ln}
\\ &
 \{ \mbox{\rm \MM-convex} \} \cap  \{ \mbox{\rm \Mnat-convex} \}
= \{ \mbox{\rm M-convex} \} ,
\label{fnfamsepM2Mn}
\\ &
 \{ \mbox{\rm L-convex} \} \cap  \{ \mbox{\rm M-convex} \} = \emptyset ,
\label{fnfamLMemp}
\\ &
 \{ \mbox{\rm \LL-convex} \} \cap  \{ \mbox{\rm \MM-convex} \} = \emptyset ,
\label{fnfamL2M2emp}
\\ &
 \{ \mbox{\rm \Lnat-convex} \} \cap  \{ \mbox{\rm \Mnat-convex} \}
= \{ \mbox{\rm separable convex} \} ,
\label{fnfamsepLnMn}
\\ &
 \{ \mbox{\rm \LLnat-convex} \} \cap  \{ \mbox{\rm \MMnat-convex} \}
= \{ \mbox{\rm separable convex} \} ,
\label{fnfamsepL2nM2n}
\\ &
\{ \mbox{\rm L-convex} \}  \cap  \{ \mbox{\rm \Mnat-convex} \}
 = \{ \mbox{\rm linear on $\ZZ\sp{n}$} \}, 
\label{fnfamLMn}
\\ &
\{ \mbox{\rm \Lnat-convex} \}  \cap  \{ \mbox{\rm M-convex} \}
 = \{ \mbox{\rm singleton effective domain} \} ,
\label{fnfamLnM}
\\ &
 \{ \mbox{\rm multimodular} \} \cap  \{ \mbox{\rm \Lnat-convex} \}
= \{ \mbox{\rm separable convex} \} ,
\label{fnfamsepmultmLn}
\\ &
 \{ \mbox{\rm multimodular} \} \cap  \{ \mbox{\rm \LLnat-convex} \}
= \{ \mbox{\rm separable convex} \} ,
\label{fnfamsepmultmL2n}
\\ &
 \{ \mbox{\rm multimodular} \} \cap  \{ \mbox{\rm \Mnat-convex} \}
 \supsetneqq \{ \mbox{\rm separable convex} \} ,
\label{fnfamsepmultmMn}
\\ &
 \{ \mbox{\rm globally d.m.c.} \} \cap  \{ \mbox{\rm directed d.m.c.} \}
 \supsetneqq \{ \mbox{\rm \Lnat-convex} \} .
\label{fnfamgdmcddmc}
\end{align}
The relations 
\eqref{fnfamsepL2Ln} and \eqref{fnfamsepM2Mn}
will be established in Theorems \ref{THl2lnatL} and \ref{THm2mnatM}, respectively.
The relations \eqref{fnfamLMemp} and \eqref{fnfamL2M2emp} are obvious.
The relations \eqref{fnfamsepLnMn} and \eqref{fnfamsepL2nM2n} are well known,
originating in \cite[Theorem~3.17]{MS01rel}
and stated in \cite[Theorem~8.49]{Mdcasiam}.
The relations \eqref{fnfamLMn} and \eqref{fnfamLnM} 
will be discussed in Section~\ref{SCrelLM}.
The relations 
\eqref{fnfamsepmultmLn} and \eqref{fnfamsepmultmL2n}
will be established in Theorems \ref{THlnatmmB} and \ref{THl2natmmB}, respectively,
and \eqref{fnfamsepmultmMn} is discussed in Section~\ref{SCmnatmm}.
The relation
\eqref{fnfamgdmcddmc} is treated in Section~\ref{SCreldmc}.

In the following we make observations, major and minor,
for each case of intersection.

\subsection{L-convexity}
\label{SCrelL}

In this section we deal with intersection of classes related to
L-convexity (and its relatives like \Lnat-, \LL-, and \LLnat-convexity).

\begin{itemize}

\item
{\bf L-convexity \& separable convexity}:
An L-convex set $S$ is a box 
if and only if 
$S = \ZZ\sp{n}$.
Hence, a function $f$ is both  L-convex and separable convex  
if and only if 
$f$ is a linear function with $\dom f = \ZZ\sp{n}$.

($\because$
An L-convex set $S$ has translation invariance
($x  \in S, \  \mu \in \ZZ   \Rightarrow  x +  \mu \vecone \in S$)
in \eqref{invarone}.
If a box has this property, it must be $\ZZ\sp{n}$.
The statement for a function follows from the statement for a set.)

\item
{\bf\LL-convexity \& separable convexity}:
An \LL-convex set $S$ is a box
if and only if 
$S = \ZZ\sp{n}$.
Hence, a function $f$ is both  \LL-convex and separable convex  
if and only if 
$f$ is a linear function with $\dom f = \ZZ\sp{n}$.

($\because$
An \LL-convex set $S$ also has translation invariance.)

\item     
{\bf \LL-convexity \& \Lnat-convexity}:
A set $S$ is both \LL-convex and \Lnat-convex
if and only if 
$S$ is L-convex.
Hence, a function $f$ is both  \LL-convex and \Lnat-convex
if and only if 
$f$ is L-convex.
See Theorem~\ref{THl2lnatL} below.
\end{itemize}

The following theorem shows 
that the combination of \LL-convexity and \Lnat-convexity
is equivalent to {\rm L}-convexity.

\begin{theorem} \label{THl2lnatL}
\quad

\noindent
{\rm (1)}
A set $S$ $(\subseteq \ZZ\sp{n})$ is L-convex
if and only if
it is both \LL-convex and \Lnat-convex.

\noindent
{\rm (2)}
A function $f:\ZZ\sp{n} \to \Rinf$
is L-convex
if and only if
it is both \LL-convex and \Lnat-convex.
\end{theorem}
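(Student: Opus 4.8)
The plan is to prove statement (1) for sets first, and then derive statement (2) for functions by invoking Proposition~\ref{PRlf-lset} with the roles A-convex $=$ \LL-convex and B-convex $=$ \Lnat-convex. For this reduction to be legitimate I must check the hypothesis \eqref{cnvABargminfp}: both \LL-convexity and \Lnat-convexity are preserved under taking $\argmin f[-p]$, and any \LL-convex or \Lnat-convex function satisfies \eqref{ABCfunAssm}. These are standard facts of discrete convex analysis (the set of minimizers of $f[-p]$ is always of the same convexity type, and L$^\natural$/\LL-convex functions are convex-extensible with polyhedral effective domain), so I would state them briefly and cite the relevant parts of \cite{Mdcasiam}. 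Granting this, once (1) is established, (2) is immediate: if $f$ is both \LL-convex and \Lnat-convex, then $\argmin f[-p]$ is both \LL-convex and \Lnat-convex for every $p$, hence L-convex by (1), hence $f$ is L-convex by Theorem~\ref{THlf-fp}; the converse direction is contained in the inclusion \eqref{fnfamL2}.

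For the set statement (1), one direction is just the inclusion $\{\mbox{L-conv.}\} \subseteq \{\mbox{\LLnat-conv.}\}$ restricted appropriately: every L-convex set is both \LL-convex and \Lnat-convex by \eqref{fnfamL2}. The substantive direction is: a set $S$ that is both \LL-convex and \Lnat-convex is L-convex. Recall that an L-convex set is characterized as an \Lnat-convex set that additionally has translation invariance $x \in S \Rightarrow x + \mu\vecone \in S$ for all $\mu \in \ZZ$ (the property \eqref{invarone}). So since $S$ is already assumed \Lnat-convex, it suffices to show that \LL-convexity of $S$ forces this translation invariance. Here I would use the defining representation of an \LL-convex set as a Minkowski sum $S = S_1 + S_2$ of two L-convex sets $S_1, S_2$ (equivalently, the Minkowski sum of an L-convex set and a point-wise shift structure), each of which individually is translation-invariant; the Minkowski sum of translation-invariant sets need not be translation-invariant in the naive sense, so this requires a little care — which is exactly where the work lies.

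The main obstacle is precisely this last point: \LL-convexity alone does not give translation invariance (e.g. the set $T$ built in Example~\ref{EXnorelLnnotdmc} is \LL-convex but not L-convex), so I cannot conclude from \LL-convexity in isolation. The argument must genuinely combine both hypotheses. The clean way is to work with the polyhedral description: $\overline{S}$, being the convex hull of an \Lnat-convex set, is described by inequalities of the form $x_i \le \beta_i$, $x_i \ge \alpha_i$, and $x_i - x_j \le \gamma_{ij}$; being additionally \LL-convex (convex hull of an \LL-convex set) imposes that $\overline{S}$ is a Minkowski sum $\overline{S_1} + \overline{S_2}$ of two polyhedra each of L-type. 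I would argue that such a sum, if it is to be itself of L$^\natural$-type (no upper/lower box constraints that are "genuinely active" in an incompatible way), must in fact contain a line in the direction $\vecone$, i.e. no box constraints $x_i \le \beta_i$ or $x_i \ge \alpha_i$ with finite right-hand side survive — because an L-convex polyhedron is a "tube" invariant under translation by $\vecone$, and adding two such tubes keeps the $\vecone$-direction recession, whereas the only way the sum fails to be a single L-convex tube is by acquiring box faces, which \Lnat-convexity then constrains. Making this dichotomy rigorous — showing that the simultaneous presence of box constraints and the Minkowski-sum structure is contradictory unless $S$ is already L-convex — is the heart of the proof; I expect to phrase it via recession cones (the recession cone of $\overline{S}$ must contain both $+\vecone$ and $-\vecone$) and then invoke the characterization of L-convex sets among \Lnat-convex sets.
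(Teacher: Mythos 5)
Your overall skeleton is the same as the paper's: prove the set statement (1) via the characterization ``L-convex $=$ \Lnat-convex $+$ translation invariant'' (Theorem~\ref{THlset}), then lift to functions via Proposition~\ref{PRlf-lset} after checking \eqref{cnvABargminfp}. That part is fine. The gap is in the one substantive step, where you have manufactured an obstacle that does not exist and then left the resulting (unnecessary) argument as an unexecuted sketch. You write that ``the Minkowski sum of translation-invariant sets need not be translation-invariant in the naive sense, so this requires a little care --- which is exactly where the work lies.'' This is false: if $S=S_{1}+S_{2}$ with $S_{1}$ invariant under $x\mapsto x+\mu\vecone$, then for any $x=x_{1}+x_{2}\in S$ one has $x+\mu\vecone=(x_{1}+\mu\vecone)+x_{2}\in S_{1}+S_{2}=S$. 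So every \LL-convex set is translation invariant in the sense of \eqref{invarone} (the paper states this explicitly in Section~\ref{SCl2fn}), and combined with the assumed \Lnat-convexity, Theorem~\ref{THlset}(b) finishes the proof of (1) in two lines. This is exactly the paper's proof.

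Your citation of Example~\ref{EXnorelLnnotdmc} as evidence that ``\LL-convexity alone does not give translation invariance'' is a misreading: the set $T$ there is manifestly invariant under adding $\mu(\vecone,1)$; it fails to be L-convex because it is not a sublattice (not \Lnat-convex), not because translation invariance fails. Having misdiagnosed the difficulty, you then propose a recession-cone / box-constraint dichotomy as ``the heart of the proof'' and do not carry it out; as written, the proposal therefore does not contain a proof of the only nontrivial implication. (It is also needlessly indirect: a recession-cone statement about $\overline{S}$ would still have to be converted back into closure of the integer set $S$ under integer shifts by $\vecone$, whereas the direct Minkowski-sum argument gives that immediately.) Replace that entire paragraph with the two-line observation above and the proof is complete and coincides with the paper's.
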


\begin{proof}
(1)
First note that the only-if-part of (1) is obvious.
To prove the if-part, suppose that
a set $S$ is both \LL-convex and \Lnat-convex.
Recall that an \Lnat-convex set $S$ is L-convex 
if and only if
it is translation invariant in the sense that
$x + \mu \vecone \in S$ for every $x \in S$ and every integer $\mu$
(see Theorem~\ref{THlset}).
Since
$S$ is \LL-convex, it can be represented as
$S = S_{1} + S_{2}$ with two L-convex sets $S_{1}$ and $S_{2}$.
Since $S_{1}$ and $S_{2}$ are translation invariant, $S$ is also 
translation invariant. Therefore, $S$ must be L-convex.

(2)
The statement (2) for functions follows from (1) for sets
by the general principle given in 
Proposition~\ref{PRlf-lset}. 
Note that the assumption \eqref{cnvABargminfp} 
holds for \LL-convexity and \Lnat-convexity
(see \cite[Proposition 8.40]{Mdcasiam} for this statement about \LL-convexity).
\end{proof}

\subsection{M-convexity}
\label{SCrelM}

In this section we deal with intersection of classes related to
M-convexity (and its relatives like \Mnat-, \MM-, and \MMnat-convexity).

\begin{itemize}
\item
{\bf M-convexity \& separable convexity}:
An M-convex set $S$ is a box 
if and only if 
$S$ is a singleton (a set consisting of a single point).
Hence, a function $f$ is both  M-convex and separable convex  
if and only if 
$f$ is a function whose effective domain is a singleton.

($\because$
An M-convex set $S$ consists of vectors with a constant component sum,
i.e., $x(N) = y(N)$ for all  $x, y \in S$.
If a box has this property, it must be a singleton.
The statement for a function follows from the statement for a set.)

\item
{\bf \MM-convexity \& separable convexity}:
An \MM-convex set $S$ is a box 
if and only if $S$ is a singleton.
Hence, a function $f$ is both  \MM-convex and separable convex  
if and only if 
$f$ is a function whose effective domain is a singleton.

($\because$
An \MM-convex set $S$ also consists of vectors with a constant component sum.)

\item 
{\bf \MM-convexity \& \Mnat-convexity}:
A set $S$ is 
both \MM-convex and \Mnat-convex
if and only if 
$S$ is M-convex.
Hence, a function $f$ is both  \MM-convex and \Mnat-convex
if and only if 
$f$ is M-convex.
See Theorem~\ref{THm2mnatM} below.
\end{itemize}

The following theorem shows 
that the combination of \MM-convexity and \Mnat-convexity
is equivalent to {\rm M}-convexity.

\begin{theorem} \label{THm2mnatM}
\quad

\noindent
{\rm (1)}
A set $S$ $(\subseteq \ZZ\sp{n})$ is M-convex
if and only if
it is both \MM-convex and \Mnat-convex.

\noindent
{\rm (2)}
A function $f:\ZZ\sp{n} \to \Rinf$
is M-convex
if and only if
it is both \MM-convex and \Mnat-convex.
\end{theorem}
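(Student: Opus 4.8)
The plan is to follow the proof of Theorem~\ref{THl2lnatL} almost verbatim, with the role played there by translation invariance --- the property distinguishing L-convex sets among \Lnat-convex sets --- now taken over by the constant-component-sum property, which distinguishes M-convex sets among \Mnat-convex sets. The key structural input is the M-counterpart of Theorem~\ref{THlset} (the criterion invoked in the proof of Theorem~\ref{THl2lnatL}): an \Mnat-convex set $S$ is M-convex if and only if $x(N) = y(N)$ for all $x, y \in S$. One half of this --- that M-convexity forces a common component sum --- has already been used in Section~\ref{SCrelM}.

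For part~(1), the only-if direction is immediate, since an M-convex set is both \MM-convex and \Mnat-convex by \eqref{fnfamM2} and \eqref{fnfamM}. For the if-direction, let $S$ be both \MM-convex and \Mnat-convex. Because $S$ is \MM-convex, all its elements share a common component sum; this is the fact recorded in Section~\ref{SCrelM}, and it holds because an \MM-convex set is assembled from two M-convex sets, each lying in a hyperplane of the form $\{x \mid x(N) = \mbox{const}\}$, by a set operation that preserves this feature. Combining the constant component sum of $S$ with its \Mnat-convexity and the criterion above, we conclude that $S$ is M-convex.

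For part~(2), the statement for functions follows from part~(1) for sets via the general reduction of Proposition~\ref{PRmf-mset}, applied with ``A-convexity'' taken to be \MM-convexity and ``B-convexity'' to be \Mnat-convexity. What remains is to confirm the hypotheses \eqref{ABCfunAssm}--\eqref{cnvABargminfp} for this pair: that an \MM-convex or \Mnat-convex function has bounded effective domain or is convex-extensible over a polyhedral domain, and that $\argmin f[-p]$ is again \MM-convex (resp.\ \Mnat-convex) for every $p \in \RR\sp{n}$. Both are covered by the general remark following \eqref{cnvABargminfp}, which records these properties for every convexity class treated in the paper.

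I expect the only delicate point to be this structural input rather than the deduction: the precise statement that an \Mnat-convex set contained in a hyperplane $\{x \mid x(N) = r\}$ is genuinely M-convex, together with the dual assertion that \MM-convexity entails a constant component sum. These are standard facts relating integral base polyhedra to integral generalized polymatroids, imported from Appendix~\ref{SCdiscfndef} and Section~\ref{SCrelM}; once they are granted the argument is purely formal and involves no computation.
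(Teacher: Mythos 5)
Your proposal is correct and matches the paper's own argument essentially verbatim: the if-direction combines the fact that an \MM-convex set lies in a constant-component-sum hyperplane (Section~\ref{SCm2fn}) with the criterion that an \Mnat-convex set with $x(N)=y(N)$ for all $x,y\in S$ is M-convex (Section~\ref{SCmfn}), and part~(2) is reduced to part~(1) via Proposition~\ref{PRmf-mset}. No gaps.
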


\begin{proof}
(1)
First note that the only-if-part of (1) is obvious.
To prove the if-part, suppose that
a set $S$ is both \MM-convex and \Mnat-convex.
Recall that an \Mnat-convex set $S$ is M-convex 
if and only if
$x(N) = y(N)$ for all  $x, y \in S$
(see Section~\ref{SCmfn}).
On the other hand,
an \MM-convex set has this property.
Therefore, $S$ must be M-convex.

(2)
The statement (2) for functions follows from (1) for sets
by the general principle given in 
Proposition~\ref{PRmf-mset}. 
Note that the assumption \eqref{cnvABargminfp} 
holds for \MM-convexity and \Mnat-convexity
(see \cite[Theorems 8.17]{Mdcasiam} for this statement about \MM-convexity).
\end{proof}

\subsection{L-convexity and M-convexity}
\label{SCrelLM}

In this section we deal with intersection of classes
of L-convexity and M-convexity (including their variants).

\begin{itemize}
\item     
{\bf L-convexity \& M-convexity}:
There exists no set that is both L-convex and M-convex.
Hence, there exists no function that is both L-convex and M-convex.

($\because$
An L-convex set $S$ has translation invariance
($x  \in S, \  \mu \in \ZZ   \Rightarrow  x +  \mu \vecone \in S$)
in \eqref{invarone}.
An M-convex set $S$ consists of vectors with a constant component sum,
i.e., $x(N) = y(N)$ for all  $x, y \in S$.
These two properties are not compatible.)

\item     
{\bf \Lnat-convexity \& \Mnat-convexity}:
A set $S$ is 
both \Lnat-convex and \Mnat-convex
if and only if 
$S$ is a box.
Hence, a function $f$ is both  \Lnat-convex and \Mnat-convex
if and only if 
$f$ is separable convex.
See \cite[Theorem~3.17]{MS01rel} and \cite[Theorem~8.49]{Mdcasiam}.

\item     
{\bf L-convexity \& \Mnat-convexity}:
A set $S$ is 
both L-convex and \Mnat-convex
if and only if 
$S = \ZZ\sp{n}$.
Hence, a function $f$ is both  L-convex and \Mnat-convex  
if and only if 
$f$ is a linear function with $\dom f = \ZZ\sp{n}$.

($\because$
An \Mnat-convex set $S$ has the property that
if $x, y \in S$ and $x \leq y$, then $[x, y]_{\ZZ} \subseteq S$,
whereas an L-convex set has translation invariance.
Hence $S = \ZZ\sp{n}$.)

\item
{\bf \Lnat-convexity \& M-convexity}:
A set $S$ is 
both \Lnat-convex and M-convex
if and only if $S$ is a singleton.
Hence, a function $f$ is both \Lnat-convex and M-convex  
if and only if 
$f$ is a function whose effective domain is a singleton.

($\because$
An \Lnat-convex set $S$ has the property that
$x, y \in S$ implies 
$x \vee y, x \wedge y \in S$,
whereas an M-convex set consists of vectors with a constant component sum.
Hence $S$ must be a singleton.)

\end{itemize}

The four statements above can be extended to \LL-convexity and \MM-convexity
without substantial changes.

\begin{itemize}
\item     
{\bf \LL-convexity \& \MM-convexity}:
There exists no set that is both \LL-convex and \MM-convex.
Hence, there exists no function that is both \LL-convex and \MM-convex.
This implies that there exists no function that is 
both  \LL-convex and M-convex (or L-convex and \MM-convex).

($\because$
An \LL-convex set has translation invariance
and an \MM-convex set consists of vectors with a constant component sum.)

\item     
{\bf \LLnat-convexity \& \MMnat-convexity}:
A set $S$ is 
both \LLnat-convex and \MMnat-convex
if and only if 
$S$ is a box.
Hence, a function $f$ is both  \LLnat-convex and \MMnat-convex
if and only if 
$f$ is separable convex.
See \cite[Theorem~3.17]{MS01rel}, \cite[Theorem~8.49]{Mdcasiam}, and
\cite{MM21l2ineq}. 
This implies that
a function $f$ is both  \LLnat-convex and \Mnat-convex 
(or \MMnat-convex and \Lnat-convex) 
if and only if 
$f$ is separable convex.

\item     
{\bf \LL-convexity \& \MMnat-convexity}:
A set $S$ is 
both \LL-convex and \MMnat-convex
if and only if 
$S = \ZZ\sp{n}$ (proved below).
Hence, a function $f$ is both  \LL-convex and \MMnat-convex  
if and only if 
$f$ is a linear function with $\dom f = \ZZ\sp{n}$.
This implies that
a function $f$ is both  \LL-convex and \Mnat-convex 
(or L-convex and \MMnat-convex) 
if and only if 
$f$ is a linear function with $\dom f = \ZZ\sp{n}$.

($\because$
An \MMnat-convex set $S$ has the property that
if $x, y \in S$ and $x \leq y$, then $[x, y]_{\ZZ} \subseteq S$,
whereas an \LL-convex set has translation invariance.
Hence $S = \ZZ\sp{n}$.)

\item
{\bf \LLnat-convexity \& \MM-convexity}:
A set $S$ is 
both \LLnat-convex and \MM-convex
if and only if 
$S$ is a singleton (proved below).
Hence, a function $f$ is both \LLnat-convex and \MM-convex  
if and only if $\dom f$ is a singleton.
This implies that
a function $f$ is both  \LLnat-convex and M-convex 
(or \Lnat-convex and \MM-convex) 
if and only if $\dom f$ is a singleton.

($\because$
Let $S = S_{1}  + S_{2}$
with \Lnat-convex sets $S_{k}$ for $k=1,2$.
Take $x,y \in S$ and represent them as
$x = x_{1}  + x_{2}$ and
$y = y_{1}  + y_{2}$ 
with $x_{k}, y_{k} \in S_{k}$ for $k=1,2$.
Define 
$z_{k} = x_{k} \vee y_{k}$
for $k=1,2$, and 
$z = z_{1} + z_{2}$.
Since $z_{k} \in S_{k}$ for $k=1,2$, 
we have $z \in S$.
By \MM-convexity of $S$, we have $z(N)=x(N)$, while
\[
z(N) = z_{1}(N) + z_{2}(N)
 = (x_{1} \vee y_{1})(N) + (x_{2} \vee y_{2})(N)
 \geq  x_{1}(N) + x_{2}(N) = x(N).
\] 
It then follows that
$x_{k} \vee y_{k} = x_{k}$
for $k=1,2$,
that is,
$y_{k} \leq x_{k}$
for $k=1,2$.
By symmetry we also have
$y_{k} \geq x_{k}$
for $k=1,2$,
and hence $x = y$.)
\end{itemize}

\subsection{Multimodularity}
\label{SCrelmm}

In this section we deal with intersection of classes related to multimodularity.

\begin{itemize}
\item     
{\bf Multimodularity \& L-convexity}:
A set $S$ is 
both multimodular and L-convex
if and only if 
$S = \ZZ\sp{n}$.
Hence, a function $f$ is 
both multimodular and L-convex
if and only if 
$f$ is a linear function with $\dom f = \ZZ\sp{n}$.

($\because$
A multimodular set $S$ has the property that
if $x, y \in S$ and $x \leq y$, then $[x, y]_{\ZZ} \subseteq S$
(Proposition~\ref{PRmmbox}),
whereas an L-convex set has translation invariance.
Hence $S = \ZZ\sp{n}$.)

\item     
{\bf Multimodularity \& \LL-convexity}:
A set $S$ is 
both multimodular and \LL-convex
if and only if 
$S = \ZZ\sp{n}$.
Hence, a function $f$ is 
both multimodular and \LL-convex
if and only if 
$f$ is a linear function with $\dom f = \ZZ\sp{n}$.

($\because$
A multimodular set $S$ has the property that
if $x, y \in S$ and $x \leq y$, then $[x, y]_{\ZZ} \subseteq S$
(Proposition~\ref{PRmmbox}),
whereas an \LL-convex set has translation invariance.
Hence $S = \ZZ\sp{n}$.)

\item     
{\bf Multimodularity \& \Lnat-convexity}:
A set $S$ is 
both multimodular and \Lnat-convex
if and only if 
$S$ is a box.
Hence, a function $f$ is both multimodular and \Lnat-convex
if and only if 
$f$ is separable convex.
See Theorem~\ref{THlnatmmB} in Section~\ref{SCl2natmm}.

\item     
{\bf Multimodularity \& \LLnat-convexity}:
A set $S$ is 
both multimodular and \LLnat-convex
if and only if 
$S$ is a box.
Hence, a function $f$ is both multimodular and \LLnat-convex
if and only if 
$f$ is separable convex.
See Theorem~\ref{THl2natmmB} in Section~\ref{SCl2natmm}.

\item 
{\bf Multimodularity \& \Mnat-convexity}:
A function in two variables is multimodular
if and only if it is \Mnat-convex
\cite[Remark 2.2]{MM19multm}. 
For functions in more than two variables,
these classes are distinct, without inclusion relation.
The intersection of these classes
can be analyzed and understood fairly well (Section~\ref{SCmnatmmth}). 
Figure~\ref{FGmnatmm}(a) demonstrates their nonempty intersection, 
giving a concrete example of a set that is both multimodular and \Mnat-convex
(see Example~\ref{EXmnatmmdim3}). 
This is also an instance of a set that is both multimodular and \MMnat-convex.

\item 
{\bf Multimodularity \& M-convexity}:
The intersection of these two classes
can be analyzed and understood fairly well
(Section~\ref{SCmnatmmth}). 
Their intersection is  nonempty,
which is demonstrated by 
the set $S$ in \eqref{mnatmmSmax}.
That is,
the set of integer points on the maximal face 
($x_{1}+x_{2}+x_{3}=6$) in Fig.~\ref{FGmnatmm}(a)
is both multimodular and M-convex
(see Example~\ref{EXmnatmmdim3}).
This is also an instance of a set that
is both multimodular and \MM-convex.
\end{itemize}

\subsection{Discrete midpoint convexity}
\label{SCreldmc}

In this section we deal with intersection of classes related to
discrete midpoint convexity.
We first note that the indicator function of any nonempty subset of $\{ 0, 1 \}\sp{n}$
is (globally, directed) discrete midpoint convex.
We also note that the indicator function of $S= \{ (2,0), (1,1), (0,2) \}$
is (globally, directed) discrete midpoint convex.

\begin{itemize}

\item 
{\bf d.m.c. \& \LL-convexity}:
An L-convex function is both \LL-convex and  (globally, directed) discrete midpoint convex,
but it is not known whether the converse is true or not.

\item 
{\bf d.m.c. \& \LLnat-convexity}:
An \Lnat-convex function is both \LLnat-convex and (globally, directed) discrete midpoint convex,
but it is not known whether the converse is true or not.

\item 
{\bf d.m.c. \& M-convexity}:
For every matroid, 
the indicator function of the set of its bases 
is both M-convex and (globally, directed) discrete midpoint convex.
The indicator function of $S= \{ (2,0), (1,1), (0,2) \}$
is both M-convex and (globally, directed) discrete midpoint convex.
Note that general separable convex functions do not belong to this class 
because a separable convex function $f$ is M-convex only if $\dom f$ is a singleton.

\item 
{\bf d.m.c. \& \MM-convexity}:
For every pair of matroids, 
the indicator function of the set of their common bases 
is both \MM-convex and (globally, directed) discrete midpoint convex.
For example,
the indicator function of $S= \{ (1,1,0,0), (0,0,1,1) \}$
is both \MM-convex
and (globally, directed) discrete midpoint convex.

\item 
{\bf d.m.c. \& \Mnat-convexity}:
A separable convex function is both \Mnat-convex and 
(globally, directed) discrete midpoint convex.
The converse is not true.
Indeed, the set $S= \{ (1,0), (0,1) \}$
is \Mnat-convex and (globally, directed) discrete midpoint convex,
but it is not a box.
Another example is given by $S= \{ (2,0), (1,1), (0,2) \}$.

\item 
{\bf d.m.c. \& \MMnat-convexity}:
A separable convex function is both \MMnat-convex and 
(globally, directed) discrete midpoint convex.
The converse is not true,  as shown by the indicator function of 
$S= \{ (1,0), (0,1) \}$ or $S= \{ (2,0), (1,1), (0,2) \}$.

\item 
{\bf d.m.c. \& multimodularity}:
A separable convex function is both multimodular and 
(globally, directed) discrete midpoint convex.
The converse is not true,  as shown by the indicator function of
$S= \{ (1,0), (0,1) \}$ or $S= \{ (2,0), (1,1), (0,2) \}$.

\item 
{\bf globally d.m.c. \& directed d.m.c.}:
An \Lnat-convex function is both globally and directed discrete midpoint convex.
The converse is not true,  as shown by the indicator function of
$S= \{ (1,0), (0,1) \}$ or $S= \{ (2,0), (1,1), (0,2) \}$.
\end{itemize}

\section{Results Concerning Multimodularity}
\label{SCresult}

In this section we present 
results concerning multimodularity,
which constitute the major technical contributions
of this paper.

\subsection{Definition of multimodularity}
\label{SCmmfnDef}

Recall that $\unitvec{i}$ denotes the $i$th unit vector for $i =1,2,\ldots, n$, and 
let $\mathcal{F} \subseteq \ZZ\sp{n}$ be a
set of vectors defined by
\begin{equation} \label{multimodirection1}
\mathcal{F} = \{ -\unitvec{1}, \unitvec{1}-\unitvec{2}, \unitvec{2}-\unitvec{3}, \ldots, 
  \unitvec{n-1}-\unitvec{n}, \unitvec{n} \} .
\end{equation}
A finite-valued function $f: \ZZ\sp{n} \to \RR$
is said to be {\em multimodular}
\cite{Haj85}
if it satisfies
\begin{equation} \label{multimodulardef1}
 f(z+d) + f(z+d') \geq   f(z) + f(z+d+d')
\end{equation}
for all 
$z \in \ZZ\sp{n}$ and all distinct $d, d' \in \mathcal{F}$.
It is known \cite[Proposition~2.2]{Haj85} 
that $f: \ZZ\sp{n} \to \RR$
is multimodular if and only if the function 
$\tilde f: \ZZ\sp{n+1} \to \RR$ 
defined by 
\begin{equation} \label{multimodular1}
 \tilde f(x_{0}, x) = f(x_{1}-x_{0},  x_{2}-x_{1}, \ldots, x_{n}-x_{n-1})  
 \qquad ( x_{0} \in \ZZ, x \in \ZZ\sp{n})
\end{equation}
is submodular in $n+1$ variables. 
This characterization enables us to define multimodularity 
for a function that may take the infinite value $+\infty$.
That is, we say
\cite{MM19multm,Mmult05} 
that a function $f: \ZZ\sp{n} \to \RR \cup \{ +\infty \}$ with $\dom f \not= \emptyset$
is multimodular if the function 
$\tilde f: \ZZ\sp{n+1} \to \RR \cup \{ +\infty \}$
associated with $f$ by \eqref{multimodular1} is submodular.

Multimodularity and L$\sp{\natural}$-convexity
have the following close relationship.

\begin{theorem}[\cite{Mmult05}]  \label{THmmfnlnatfn}
A function $f: \ZZ\sp{n} \to \RR \cup \{ +\infty \}$
is multimodular if and only if the function $g: \ZZ\sp{n} \to \RR \cup \{ +\infty \}$
defined by
\begin{equation} \label{mmfnGbyF}
 g(y) = f(y_{1}, \  y_{2}-y_{1}, \  y_{3}-y_{2}, \ldots, y_{n}-y_{n-1})  
 \qquad ( y \in \ZZ\sp{n})
\end{equation}
is L$\sp{\natural}$-convex.
\finbox
\end{theorem}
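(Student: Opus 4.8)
The plan is to reduce the statement to the known characterization recalled just above in \eqref{multimodular1}, namely that $f$ is multimodular if and only if the associated function $\tilde f(x_0,x) = f(x_1-x_0,\ldots,x_n-x_{n-1})$ on $\ZZ^{n+1}$ is submodular, and then to connect submodularity in $n+1$ variables to \Lnat-convexity in $n$ variables via the standard fact that a function $h$ on $\ZZ^{n+1}$ is L-convex (equivalently, submodular and invariant under translation by $\vecone$) precisely when the function obtained from $h$ by fixing the translation direction is \Lnat-convex on $\ZZ^{n}$. Concretely, I would introduce the change of variables $y_i = x_i - x_0$ for $i=1,\ldots,n$ (so $x_0$ plays the role of the free ``$0$-th coordinate''), observe that $x_i - x_{i-1} = y_i - y_{i-1}$ for $i \geq 2$ and $x_1 - x_0 = y_1$, and note that under this substitution $\tilde f(x_0,x)$ becomes exactly $g(y)$ as defined in \eqref{mmfnGbyF}, independent of $x_0$.

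First I would make precise the three-way dictionary: (a) $f$ multimodular; (b) $\tilde f$ submodular on $\ZZ^{n+1}$; (c) $g$ \Lnat-convex on $\ZZ^{n}$. The equivalence (a) $\Leftrightarrow$ (b) is \cite[Proposition~2.2]{Haj85}, already quoted. For (b) $\Leftrightarrow$ (c) I would argue as follows. Because $\tilde f(x_0,x)$ depends only on the differences $x_i - x_0$, it is invariant under the simultaneous translation $(x_0,x) \mapsto (x_0,x) + \mu\vecone$ for every $\mu \in \ZZ$; hence $\tilde f$ is submodular if and only if it is L-convex on $\ZZ^{n+1}$ (submodularity plus $\vecone$-translation invariance is exactly the definition of L-convexity). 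By the standard correspondence between L-convex functions on $\ZZ^{n+1}$ and \Lnat-convex functions on $\ZZ^{n}$ — restricting an L-convex function to the hyperplane $x_0 = 0$ gives an \Lnat-convex function, and conversely an \Lnat-convex function lifts to an L-convex one by $\vecone$-translation — the L-convexity of $\tilde f$ is equivalent to the \Lnat-convexity of its restriction to $x_0 = 0$. That restriction, written in the variables $y = x$, is $x \mapsto \tilde f(0,x) = f(x_1, x_2 - x_1, \ldots, x_n - x_{n-1}) = g(x)$, which is exactly the function in \eqref{mmfnGbyF}. Chaining the equivalences yields the theorem.

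The one point requiring care is the handling of $+\infty$ values and effective domains, so that the ``L-convex $\leftrightarrow$ \Lnat-convex'' passage is valid for extended-real-valued functions and not just finite ones; here I would invoke the definition of multimodularity for extended-real-valued $f$ adopted in the paragraph preceding the theorem (via submodularity of $\tilde f$ in the sense of \cite{MM19multm,Mmult05}) together with the corresponding extended-real-valued versions of L- and \Lnat-convexity from Appendix~\ref{SCdiscfndef}, and simply note that the translation-invariance argument and the restriction/lifting bijection respect effective domains verbatim. This is the main (and essentially only) obstacle; the coordinate bookkeeping $x_i - x_{i-1} = y_i - y_{i-1}$ is routine. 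Since the statement is attributed to \cite{Mmult05}, it also suffices to remark that the theorem is precisely the combination of \cite[Proposition~2.2]{Haj85} with the L/\Lnat-convexity correspondence, and a short self-contained proof along the lines above can be given for completeness.
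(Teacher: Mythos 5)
Your argument is correct. The paper itself gives no proof of Theorem~\ref{THmmfnlnatfn} (it is quoted from \cite{Mmult05}), so there is nothing internal to compare against, but your route is the natural one: the key observation is the identity $\tilde f(x_{0},x)=g(x-x_{0}\vecone)$, since $x_{i}-x_{i-1}=(x_{i}-x_{0})-(x_{i-1}-x_{0})$, after which $f$ multimodular $\Leftrightarrow$ $\tilde f$ submodular (the paper's definition for extended-real-valued $f$) $\Leftrightarrow$ $g$ \Lnat-convex follows from the equivalence (a)$\Leftrightarrow$(f) in Theorem~\ref{THlnatfncond}, which is stated for functions into $\RR\cup\{+\infty\}$ and so already handles the effective-domain issue you flag. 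The detour through L-convexity of $\tilde f$ (submodularity plus $\vecone$-translation invariance) is harmless but can be omitted by citing Theorem~\ref{THlnatfncond}(f) directly.
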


The relation \eqref{mmfnGbyF} between $f$ and $g$ can be rewritten as
\begin{equation} \label{mmfnFbyG}
 f(x) = g(x_{1}, \  x_{1}+x_{2}, \  x_{1}+x_{2}+x_{3}, 
   \ldots, x_{1}+ \cdots + x_{n})  
 \qquad ( x \in \ZZ\sp{n}) .
\end{equation}
Using a bidiagonal matrix 
$D=(d_{ij} \mid 1 \leq i,j \leq n)$ defined by
\begin{equation} \label{matDdef}
 d_{ii}=1 \quad (i=1,2,\ldots,n),
\qquad
 d_{i+1,i}=-1 \quad (i=1,2,\ldots,n-1),
\end{equation}
we can express \eqref{mmfnGbyF} and \eqref{mmfnFbyG} 
more compactly  as $g(y)=f(Dy)$ and $f(x)=g(D\sp{-1}x)$, respectively. 
The matrix $D$ is unimodular, and its inverse
$D\sp{-1}$ is an integer lower-triangular matrix
with $(D\sp{-1})_{ij}=1$ for $i \geq j$ and 
$(D\sp{-1})_{ij}=0$ for $i < j$.
For $n=4$, for example, we have
\[
D = {\small
\left[ \begin{array}{rrrr}
1 & 0 & 0 & 0 \\
-1 & 1 & 0 & 0 \\
0 & -1 & 1 & 0 \\
0 & 0 & -1 & 1 \\
\end{array}\right]},
\qquad
D\sp{-1} = {\small
\left[ \begin{array}{rrrr}
1 & 0 & 0 & 0 \\
1 & 1 & 0 & 0 \\
1 & 1 & 1 & 0 \\
1 & 1 & 1 & 1 \\
\end{array}\right]}.
\]

A nonempty set $S$ is called {\em multimodular}
if its indicator function $\delta_{S}$ is multimodular. 
The effective domain of a multimodular function is a multimodular set.
The following proposition, a special case of Theorem~\ref{THmmfnlnatfn},
connects multimodular sets with \Lnat-convex sets.

\begin{proposition}[\cite{MM19multm,Mmult05}]  \label{PRmultmsetLnatset}
A set $S  \subseteq \ZZ\sp{n}$ is multimodular 
if and only if 
it can be represented as  $S = \{ D y \mid y \in T \}$
for some \Lnat-convex set $T$,
where $T$ is uniquely determined from $S$ as 
$T = \{ D\sp{-1} x \mid x \in S \}$.
\finbox
\end{proposition}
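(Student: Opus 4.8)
The plan is to derive this statement directly from Theorem~\ref{THmmfnlnatfn} by specializing it to indicator functions. By definition, a nonempty set $S \subseteq \ZZ\sp{n}$ is multimodular exactly when $\delta_{S}$ is multimodular, so I would invoke Theorem~\ref{THmmfnlnatfn} with $f = \delta_{S}$: it tells us that $\delta_{S}$ is multimodular if and only if the function $g$ associated with it by \eqref{mmfnGbyF}, i.e.\ $g(y) = \delta_{S}(Dy)$, is \Lnat-convex.

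The next step is to recognize $g$ as an indicator function. Since $D$ is unimodular (as noted after \eqref{matDdef}), $D\sp{-1}$ is an integer matrix, so $y \mapsto Dy$ is a bijection of $\ZZ\sp{n}$ onto itself; hence $g(y) = \delta_{S}(Dy)$ takes the value $0$ when $Dy \in S$ and $+\infty$ otherwise, which means $g = \delta_{T}$ for $T := \{ D\sp{-1}x \mid x \in S \} = \{ y \in \ZZ\sp{n} \mid Dy \in S \}$, a nonempty set because $S \ne \emptyset$. By the general correspondence recalled in Section~\ref{SCreducefnset} (a set has a given discrete convexity if and only if its indicator function does), $g = \delta_{T}$ is \Lnat-convex if and only if $T$ is an \Lnat-convex set. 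Chaining these equivalences gives: $S$ is multimodular if and only if $T = \{ D\sp{-1}x \mid x \in S \}$ is \Lnat-convex, and then $S = \{ Dy \mid y \in T \}$ because $Dy$ runs over $S$ as $y$ runs over $T$.

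For uniqueness, I would observe that if $S = \{ Dy \mid y \in T' \}$ for some set $T'$, then applying $D\sp{-1}$ yields $T' = \{ D\sp{-1}x \mid x \in S \} = T$; thus $T$ is the only set with the claimed property. I do not anticipate a genuine difficulty here: the only point needing a little care is the domain bookkeeping---verifying that $\delta_{S} \circ D$ really equals $\delta_{T}$ with $T$ a bona fide subset of $\ZZ\sp{n}$---and this is precisely where the unimodularity of $D$ is used.
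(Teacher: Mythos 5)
Your proposal is correct and follows exactly the route the paper intends: the paper presents this proposition as a special case of Theorem~\ref{THmmfnlnatfn} obtained by taking $f=\delta_{S}$, and your argument simply fills in the routine details (identifying $g=\delta_{T}$ via the unimodularity of $D$, invoking the set--indicator correspondence, and checking uniqueness). No gaps.
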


\begin{theorem}\label{THmmf-fp}
Under the assumption \eqref{ABCfunAssm},
a function $f:\ZZ\sp{n} \to \Rinf$ 
is multimodular 
if and only if,
for any vector $p \in \RR\sp{n}$, $\argmin f[-p]$ is a multimodular set
or an empty set.
\end{theorem}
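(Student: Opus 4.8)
The plan is to reduce the statement to the corresponding known fact for \Lnat-convexity, namely Theorem~\ref{THlf-fp}, by transporting everything through the unimodular coordinate change $x = Dy$. First I would recall from Proposition~\ref{PRmultmsetLnatset} and Theorem~\ref{THmmfnlnatfn} the bijective correspondence $f \leftrightarrow g$ given by $g(y) = f(Dy)$ and $f(x) = g(D^{-1}x)$, under which $f$ is multimodular if and only if $g$ is \Lnat-convex. The key observation is that this affine change of variables interacts cleanly with the Legendre-type operation $f \mapsto f[-p]$ and with taking minimizers: since $\langle p, Dy\rangle = \langle D^{\top}p, y\rangle$, we have $f[-p](Dy) = f(Dy) - \langle p, Dy\rangle = g(y) - \langle D^{\top}p, y\rangle = g[-D^{\top}p](y)$. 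Hence $\argmin f[-p] = \{ Dy \mid y \in \argmin g[-D^{\top}p]\}$, i.e.\ the minimizer sets of $f[-p]$ are exactly the images under $D$ of the minimizer sets of $g[-q]$, with $q = D^{\top}p$ ranging over all of $\RR^{n}$ as $p$ does (because $D^{\top}$ is invertible).

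Next I would combine this with the set-level correspondence in Proposition~\ref{PRmultmsetLnatset}: a set $S$ is multimodular if and only if $S = \{Dy \mid y \in T\}$ for an \Lnat-convex set $T$ (necessarily $T = \{D^{-1}x \mid x \in S\}$). Therefore ``$\argmin f[-p]$ is multimodular for every $p$'' is equivalent to ``$\argmin g[-q]$ is \Lnat-convex for every $q$''. Then I invoke Theorem~\ref{THlf-fp}: under the standing assumption \eqref{ABCfunAssm}, $g$ is \Lnat-convex if and only if $\argmin g[-q]$ is an \Lnat-convex set for every $q \in \RR^{n}$. Finally, I need to check that the hypothesis \eqref{ABCfunAssm} for $f$ passes to $g$ (and vice versa) under $y \mapsto Dy$: boundedness of $\dom f$ is equivalent to boundedness of $\dom g$ since $D$ is a linear isomorphism; likewise $\overline{\dom f}$ is a polyhedron iff $\overline{\dom g}$ is (the closure commutes with the linear map $D$), the condition $S = \overline{S} \cap \ZZ^{n}$ is preserved because $D$ is unimodular, and convex-extensibility of $f$ is equivalent to that of $g$ since composing a convex function with $D$ (or $D^{-1}$) yields a convex function and $D$ maps $\ZZ^{n}$ onto $\ZZ^{n}$. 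Chaining these equivalences: $f$ multimodular $\iff$ $g$ \Lnat-convex $\iff$ $\argmin g[-q]$ \Lnat-convex for all $q$ $\iff$ $\argmin f[-p]$ multimodular for all $p$.

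The main obstacle, and the only place where care is really needed, is verifying that the technical assumption \eqref{ABCfunAssm} transfers correctly across the coordinate change and, relatedly, checking that the notion of convex-extensibility is invariant under the affine unimodular substitution $x = Dy$; everything else is a routine unwinding of the two already-established correspondences. One should also double-check that \eqref{ABCfunAssm} for $g$ is what Theorem~\ref{THlf-fp} requires (it is, since Theorem~\ref{THlf-fp} is stated under exactly \eqref{ABCfunAssm}). If one prefers to avoid even this bookkeeping, an alternative is to cite Theorem~\ref{THmmf-fp}'s analogues: the argument structure here is literally the one used to prove Proposition~\ref{PRlf-lset} and Proposition~\ref{PRmf-mset} from Proposition~\ref{PRscf-box}, specialized to the single pair ``multimodular $\leftrightarrow$ \Lnat-convex'' rather than a generic pair of convexity classes.
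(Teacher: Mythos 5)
Your proposal is correct and follows essentially the same route as the paper: the paper's proof is precisely the one-line reduction via Theorem~\ref{THmmfnlnatfn} and Proposition~\ref{PRmultmsetLnatset} to the corresponding statement for \Lnat-convex functions (Theorem~\ref{THlf-fp}), and you have simply spelled out the details of that reduction, including the identity $f[-p](Dy)=g[-D^{\top}p](y)$ and the transfer of assumption \eqref{ABCfunAssm} under the unimodular change of variables.
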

\begin{proof}
By Theorem~\ref{THmmfnlnatfn} and Proposition~\ref{PRmultmsetLnatset},
this follows from the corresponding statement (Theorem~\ref{THlf-fp})  
for \Lnat-convex functions.
\end{proof}

\begin{example}  \rm  \label{EXnorelLnotMmm-2}
Proposition~\ref{PRmultmsetLnatset} is applied to the set
$S = \{ x \in \ZZ\sp{3}  \mid x= t (1,1,1), t \in \ZZ \}$
treated in Example~\ref{EXnorelLnotMmm}.  
We have 
\[
T = \{ D\sp{-1} x \mid x \in S \} =  \{ x \in \ZZ\sp{3} \mid x= t (1, 2 ,3), t \in \ZZ \},
\]
which is not \Lnat-convex. 
Hence $S$ is not multimodular.
\finbox
\end{example}

The reader is referred to 
\cite{AGH00,AGH03,Haj85,MM19multm,Mmult05,Mdcaprimer07}
for more about multimodularity.


\subsection{Polyhedral description of multimodular sets}
\label{SCmmpolydesc}

As a technical basis for our argument in 
Sections \ref{SCl2natmm} and \ref{SCmnatmm},
we show a description of multimodular sets
by inequalities.
A subset $I$ of the index set $N = \{ 1,2,\ldots, n \}$
is said to be {\em consecutive}
if it consists of consecutive numbers, that is,
it is a set of the form 
$I = \{ k, k+1, \ldots, l -1, l \}$
for some $k \leq l$.

\begin{theorem} \label{THmmpolydesc}
\quad

\noindent
{\rm (1)}
If $S \subseteq \mathbb{Z}\sp{n}$ is a multimodular set, then 
$S = \overline{S} \cap \ZZ\sp{n}$ and
its convex hull $\overline{S}$ is represented as
\begin{equation} \label{mmineqSbar}
  \overline{S} =   \{ x \in \RR\sp{n} \mid  a_{I} \leq x(I) \leq b_{I} 
 \ \ (\mbox{\rm $I$: consecutive subset of $N$})   \} ,
\end{equation}
where $a_{I}=\inf \{ x(I) \mid x \in S  \}$ and
$b_{I}=\sup \{ x(I) \mid x \in S  \}$
for consecutive subsets $I$ of $N$.

\noindent
{\rm (2)}
For any $a_{I} \in \ZZ \cup \{ -\infty \}$ and $b_{I} \in \ZZ \cup \{ +\infty \}$
indexed by the family of consecutive subsets $I$ of $N$,
the polyhedron $P$ defined by
\begin{equation} \label{mmineqR}
 P =   \{ x \in \RR\sp{n} \mid  a_{I} \leq x(I) \leq b_{I} 
 \ \ (\mbox{\rm $I$: consecutive subset of $N$})   \}
\end{equation}
is an integer polyhedron, and 
$S:= P \cap \ZZ\sp{n}$ is a multimodular set, provided $P \ne \emptyset$.

\noindent
{\rm (3)}
A nonempty set $S \subseteq \mathbb{Z}\sp{n}$ is multimodular
if and only if it can be represented as
\begin{equation} \label{mmineqZ}
S =   \{ x \in \ZZ\sp{n} 
\mid  a_{I} \leq x(I) \leq b_{I}  \ \ (\mbox{\rm $I$: consecutive subset of $N$})  \}
\end{equation}
for some $a_{I} \in \ZZ \cup \{ -\infty \}$ and $b_{I} \in \ZZ \cup \{ +\infty \}$
indexed by the family of consecutive subsets $I$ of $N$.
\end{theorem}
\begin{proof}
First we mention some basic facts about the transformation $x=D y$
connecting multimodularity and \Lnat-convexity.
Define  
$T = \{ y \mid y=D\sp{-1} x,  \ x \in S \}$
for any nonempty set $S \subseteq \ZZ\sp{n}$.
Then the convex hulls of $T$ and $S$ correspond to each other
by the same transformation, that is,
$\overline{T} = \{ y \mid y=D\sp{-1} x,  \ x \in \overline{S} \}$.
Since $D$ is unimodular,
$T = \overline{T} \cap \ZZ\sp{n}$ 
holds if and only if
$S = \overline{S} \cap \ZZ\sp{n}$.
By Theorem~\ref{THlnatpolydesc},
$T$ is \Lnat-convex
if and only if
$T = \overline{T} \cap \ZZ\sp{n}$ and
its convex hull $\overline{T}$ can be described as
\begin{align*} 
 & 
\alpha_{i} \leq y_{i} \leq \beta_{i} \quad (i=1,2,\ldots,n), 
\\ &
\alpha_{ij} \leq y_{j} - y_{i} \leq \beta_{ij} \quad (1 \leq i < j \leq n). 
\end{align*} 
By substituting
$y_{i} = x_{1} + x_{2} + \cdots + x_{i}$
$(i= 1,2,\ldots,n)$, i.e., $y=D\sp{-1} x$, 
into these inequalities we obtain 
the following inequalities 
\begin{align} 
 & 
\alpha_{i} \leq x_{1} + x_{2} + \cdots + x_{i} \leq \beta_{i} \quad (i=1,2,\ldots,n), 
\label{aix1ibi}
\\ &
\alpha_{ij} \leq x_{i+1} + x_{i+2} + \cdots + x_{j} \leq \beta_{ij} 
\quad 
(1 \leq i < j \leq n) 
\label{aijxijbij}
\end{align}
to describe $\overline{S}$.

(1)
If $S$ is multimodular, then
$T$ is \Lnat-convex
by Proposition~\ref{PRmultmsetLnatset}.
By the above argument, $\overline{S}$ is described 
by \eqref{aix1ibi} and \eqref{aijxijbij}.
These inequalities are of the form of 
$a_{I} \leq x(I) \leq b_{I}$
for consecutive subsets $I$;
\eqref{aix1ibi} corresponding to 
$I= \{ 1,2,\ldots,i \}$
and \eqref{aijxijbij} to 
$I= \{ i+1, i+2,\ldots,j \}$.
Thus we obtain \eqref{mmineqSbar}.
Then the validity of the expressions
$a_{I}=\inf \{ x(I) \mid x \in S  \}$ and
$b_{I}=\sup \{ x(I) \mid x \in S  \}$
is obvious.

(2)
Let  $\mathcal{I}_{1}$ (resp., $\mathcal{I}_{2}$)
denote the family of consecutive subsets $I$
for which $b_{I}$  (resp., $a_{I}$) is finite.
Let  
$A_{1}$  (resp., $A_{2}$) 
denote the matrix whose rows are indexed by
$\mathcal{I}_{1}$ (resp., $\mathcal{I}_{2}$)
and whose row vector corresponding to $I$
is the characteristic vector $\unitvec{I}$.
Let  $b$ (resp., $a$) be the vector 
$(b_{I} \mid I \in \mathcal{I}_{1})$ 
(resp., $(a_{I} \mid I \in \mathcal{I}_{2})$). 
Then the system of inequalities in \eqref{mmineqR} 
can be expressed as
\begin{equation}  \label{A1A2ba}
\left[
\begin{array}{r}
  A_{1} \\
  -A_{2} \\
\end{array} 
\right]
x \leq 
\left[
\begin{array}{r}
    b \\
   -a   \\
\end{array} 
\right] .
\end{equation}
Each of the matrices $A_{1}$ and $A_{2}$ is a so-called
interval matrix  
(row-oriented),
or a matrix with consecutive ones property.
This implies that the combined matrix
{\small
$\left[
\begin{array}{r}
  A_{1} \\
 A_{2} \\
\end{array} 
\right]$
} 
is also an interval matrix,
which is known to be totally unimodular \cite[p.279, Example~7]{Sch86}.
Therefore, the coefficient matrix
{\small
$\left[
\begin{array}{r}
  A_{1} \\
 -A_{2} \\
\end{array} 
\right]$
} 
is also totally unimodular,
and hence \eqref{A1A2ba} determines an integer polyhedron,
which shows the integrality of $P$
in \eqref{mmineqR}.
The integrality of $P$ implies 
$P = \overline{P \cap \ZZ\sp{n}}$, 
which is equivalent to saying that 
the convex hull of 
$S= P \cap \ZZ\sp{n}$ 
is described 
by \eqref{aix1ibi} and \eqref{aijxijbij}.
Then the basic facts presented at the beginning of the proof shows that
$T = \{ y \mid y=D\sp{-1} x,  \ x \in S \}$
is \Lnat-convex, 
which implies, 
by Proposition~\ref{PRmultmsetLnatset},
that $S$ is multimodular.

(3)
If $S$ is multimodular, the statement (1) shows that
$S = \overline{S} \cap \ZZ\sp{n}$
and $\overline{S}$ is described as \eqref{mmineqSbar}.
Hence follows \eqref{mmineqZ}.
Conversely, suppose that \eqref{mmineqZ} holds.
Then
 $P:= \overline{S}$ is described as \eqref{mmineqR}
by the integrality of a polyhedron of the form of \eqref{mmineqR}
shown in the statement (2).
The expression \eqref{mmineqZ} implies
$S = \overline{S} \cap \ZZ\sp{n} =P \cap \ZZ\sp{n}$,
while $P \cap \ZZ\sp{n}$ is multimodular by the statement in (2).
Thus $S$ is multimodular if it is represented as \eqref{mmineqZ}.
\end{proof}

It is worth while mentioning box-total dual integrality
of the inequality system given in Theorem~\ref{THmmpolydesc}. 
A linear inequality system $Ax \leq  b$
(in general)
is said
 to be {\em totally dual integral} 
({\em TDI}) if the entries of $A$ and $b$ are rational numbers and
the minimum in the linear programming duality equation
\begin{equation*}  
  \max\{w\sp{\top} x \mid  Ax \leq b \} \ 
  = \ \min \{y\sp{\top} b \mid  y\sp{\top} A=w\sp{\top}, \ y\geq 0 \}
\end{equation*}
has an integral optimal solution $y$ for every integral vector $w$ 
such that the minimum is finite
(\cite{Sch86,Sch03}).
A linear inequality system $Ax \leq  b$
is said to be {\em box-totally dual integral}  ({\em box-TDI}) 
if the system $[ Ax \leq  b, d \leq x\leq c ]$ 
is TDI for each choice of rational (finite-valued) vectors $c$ and $d$.  
It is known  \cite[Theorem 5.35]{Sch03} that
the system $A x \leq  b$ is box-TDI
if the matrix $A$ is totally unimodular.
A polyhedron is called a {\em box-TDI polyhedron} if it can be
described by a box-TDI system.

\begin{theorem} \label{THmmboxTDI}
The inequality system \eqref{mmineqR} describing 
the convex hull of a multimodular set is box-TDI,
and the convex hull of a multimodular set is a box-TDI integer polyhedron.
\end{theorem}

\begin{proof}
In the proof of Theorem~\ref{THmmpolydesc},  the matrix
{\small
$\left[
\begin{array}{r}
  A_{1} \\
 -A_{2} \\
\end{array} 
\right]$
} 
in \eqref{A1A2ba} is totally unimodular.  
This implies that the inequality system \eqref{A1A2ba} is box-TDI.
\end{proof}

Theorem~\ref{THmmpolydesc}
implies immediately that a box of integers is a multimodular set,
which was pointed out first in \cite[Proposition~2]{MM19multm}.
Another immediate consequence of Theorem~\ref{THmmpolydesc}
is the following property of multimodular sets.
This fact will be used in 
the proof of Theorem~\ref{THl2natmmB} in Section~\ref{SCl2natmm}.

\begin{proposition} \label{PRmmbox}
Let $S$ be a multimodular set.
If $x, y \in S$ and $x \leq y$, then $[x, y]_{\ZZ} \subseteq S$.
\end{proposition}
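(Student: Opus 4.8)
The plan is to read the conclusion off directly from the polyhedral description of multimodular sets given in Theorem~\ref{THmmpolydesc}. First I would invoke that theorem to write
\[
 S = \{ x \in \ZZ\sp{n} \mid a_{I} \leq x(I) \leq b_{I} \ \ (\text{$I$: consecutive subset of $N$}) \}
\]
for suitable $a_{I} \in \ZZ \cup \{ -\infty \}$ and $b_{I} \in \ZZ \cup \{ +\infty \}$. The point to emphasize is that this description involves only the ``interval-sum'' functionals $x \mapsto x(I) = \langle \unitvec{I}, x \rangle$, whose coefficient vectors $\unitvec{I}$ are nonnegative (indeed $0/1$), and hence are monotone nondecreasing with respect to the componentwise order.

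Next, take an arbitrary $z \in [x,y]_{\ZZ}$, so that $x_{i} \leq z_{i} \leq y_{i}$ for every $i \in N$. Summing these inequalities over $i \in I$ for any consecutive subset $I$ yields $x(I) \leq z(I) \leq y(I)$; this is the only computation involved, and it works precisely because each $\unitvec{I}$ has nonnegative entries. Combining it with the memberships $x, y \in S$, which give $a_{I} \leq x(I)$ and $y(I) \leq b_{I}$, I obtain $a_{I} \leq x(I) \leq z(I) \leq y(I) \leq b_{I}$ for every consecutive $I$. Thus $z$ is an integer vector satisfying all the defining inequalities of $S$, so $z \in S$; since $z$ was arbitrary in $[x,y]_{\ZZ}$, this gives $[x,y]_{\ZZ} \subseteq S$.

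There is essentially no obstacle here: the proposition is an immediate corollary of Theorem~\ref{THmmpolydesc}. It is worth noting, however, why one should route the argument through that particular description rather than pulling $S$ back to the \Lnat-convex set $T = \{ D\sp{-1} x \mid x \in S \}$ of Proposition~\ref{PRmultmsetLnatset}: \Lnat-convex sets do \emph{not} enjoy this ``box between two points'' property in general, since their inequality description also involves difference constraints $\alpha_{ij} \le y_{j} - y_{i} \le \beta_{ij}$ whose coefficient vectors have mixed sign and need not be preserved under componentwise sandwiching. The specialized description of Theorem~\ref{THmmpolydesc}, in which only nonnegative coefficient vectors occur, is exactly the feature that makes the statement true for multimodular sets, so it is the right tool.
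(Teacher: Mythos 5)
Your proof is correct and is essentially identical to the paper's own: both deduce the claim immediately from the description of Theorem~\ref{THmmpolydesc} by observing that $a_{I} \leq x(I) \leq z(I) \leq y(I) \leq b_{I}$ for every consecutive $I$ and every $z \in [x,y]_{\ZZ}$. Your closing remark about why the route through \Lnat-convex sets would fail is a nice observation but not needed for the argument.
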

\begin{proof}
Recall the description of $S$ in \eqref{mmineqZ}.
For any $z \in [x, y]_{\ZZ}$ we have
$a_{I} \leq x(I) \leq z(I) \leq y(I) \leq b_{I}$  
for every consecutive index set $I$, which shows $z \in S$.
\end{proof}

\begin{example}  \rm  \label{EXnorelmmnotLM-2}
We show the multimodularity of
$S= \{ x \in \ZZ\sp{3}  \mid x= t (1,-1,1), t \in \ZZ \}$,
which was stated in Example~\ref{EXnorelmmnotLM} without proof.
This set admits a representation 
\[
 S = \{ x \in \ZZ\sp{3} \mid  x_{1}+x_{2}=0, \ x_{2}+x_{3}=0 \}
\]
of the form of \eqref{mmineqZ}, 
and therefore it is multimodular by Theorem~\ref{THmmpolydesc}.
Multimodularity of $S$ can also be verified by 
Proposition~\ref{PRmultmsetLnatset}.
Indeed we have 
\[
T = \{ D\sp{-1} x \mid x \in S \} =  \{ x \in \ZZ\sp{3} \mid x= t (1, 0 ,1), t \in \ZZ \},
\]
which is \Lnat-convex. 
\finbox
\end{example}

\subsection{Multimodularity and \LLnat-convexity}
\label{SCl2natmm}

The following theorem shows 
the combination of multimodularity and \LLnat-convexity
is equivalent to separable convexity.

\begin{theorem} \label{THl2natmmB}
\quad

\noindent
{\rm (1)}
A set $S$ $(\subseteq \ZZ\sp{n})$ is a box of integers
if and only if
it is both multimodular and \LLnat-convex.

\noindent
{\rm (2)}
A function $f:\ZZ\sp{n} \to \Rinf$
is separable convex
if and only if
it is both multimodular and \LLnat-convex.
\end{theorem}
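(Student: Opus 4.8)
The plan is to establish the set statement (1) first and then obtain the function statement (2) from it through the reduction already prepared in the paper. In (1) the ``only if'' part is immediate: a box of integers is multimodular by Theorem~\ref{THmmpolydesc} and is separable convex, hence \Lnat-convex and a fortiori \LLnat-convex. For the passage to (2), Proposition~\ref{PRscf-box} applies with A-convexity being multimodularity and B-convexity being \LLnat-convexity, since both classes satisfy \eqref{cnvABargminfp} — the nontrivial half of this for multimodularity is precisely Theorem~\ref{THmmf-fp} — so ``a set which is both multimodular and \LLnat-convex is a box'' is equivalent to ``a function which is both multimodular and \LLnat-convex is separable convex.'' It therefore suffices to prove the ``if'' direction of (1).

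So suppose $S \subseteq \ZZ\sp{n}$ is both multimodular and \LLnat-convex. The key step is to show that $S$ is a sublattice, i.e.\ $x \vee y \in S$ and $x \wedge y \in S$ whenever $x, y \in S$. Write $S = S_{1} + S_{2}$ with \Lnat-convex sets $S_{1}, S_{2}$, and for $x, y \in S$ fix decompositions $x = x_{1} + x_{2}$, $y = y_{1} + y_{2}$ with $x_{k}, y_{k} \in S_{k}$. Put
\begin{equation*}
u = (x_{1} \vee y_{1}) + (x_{2} \vee y_{2}), \qquad v = (x_{1} \wedge y_{1}) + (x_{2} \wedge y_{2}) .
\end{equation*}
Since an \Lnat-convex set is closed under $\vee$ and $\wedge$ (Theorem~\ref{THlset}), we have $u, v \in S$. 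A componentwise comparison, using $\max(a,b)+\max(c,d) \geq \max(a+c,b+d)$ and the dual inequality for $\min$, gives $v \leq x \wedge y \leq x \vee y \leq u$. By Proposition~\ref{PRmmbox} — applicable since $S$ is multimodular and $v \leq u$ with $v, u \in S$ — the whole integer box $[v,u]_{\ZZ}$ lies in $S$; in particular $x \wedge y$ and $x \vee y$, being elements of $[v,u]_{\ZZ}$, belong to $S$. This proves that $S$ is a sublattice. Applying Proposition~\ref{PRmmbox} directly to a comparable pair also shows that $S$ is \emph{interval-convex}: if $x, y \in S$ and $x \leq y$, then $[x,y]_{\ZZ} \subseteq S$.

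It then remains to check that a nonempty interval-convex sublattice $S$ of $\ZZ\sp{n}$ is a box of integers. Here I would set $a_{i} = \inf\{ x_{i} : x \in S \} \in \ZZ \cup \{ -\infty \}$ and $b_{i} = \sup\{ x_{i} : x \in S \} \in \ZZ \cup \{ +\infty \}$, so that $S \subseteq [a,b]_{\ZZ}$ trivially. Conversely, given $z \in [a,b]_{\ZZ}$, for each $i$ pick $p\sp{(i)}, q\sp{(i)} \in S$ with $p\sp{(i)}_{i} \leq z_{i} \leq q\sp{(i)}_{i}$ (possible by the definitions of $a_{i}$ and $b_{i}$, a nonempty set of integers bounded below attaining its infimum), and form the finite meet $p = \bigwedge_{i} p\sp{(i)}$ and finite join $q = \bigvee_{i} q\sp{(i)}$, both in $S$; then $p \leq z \leq q$, and interval-convexity yields $z \in S$, whence $S = [a,b]_{\ZZ}$. (When $S$ is bounded this reduces to the statement that a finite sublattice has a least and a greatest element.) I expect the one genuinely substantive point to be the construction of the bracketing pair $v \leq u$ in the previous paragraph: \LLnat-convexity alone does not give closure under $\vee$ and $\wedge$, only ``overshooting'' joins and ``undershooting'' meets, and it is the interval-convexity supplied by multimodularity (Proposition~\ref{PRmmbox}) that pulls these back onto $x \vee y$ and $x \wedge y$; everything afterward, including the reduction from (1) to (2), is routine.
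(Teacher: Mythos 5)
Your proof is correct and takes essentially the same route as the paper's: both reduce (2) to (1) via Proposition~\ref{PRscf-box}, and both prove the if-part of (1) by bracketing an arbitrary point of $[a,b]_{\ZZ}$ between two elements of $S$ built from componentwise meets/joins of decomposed elements of $S_{1}+S_{2}$ and then invoking Proposition~\ref{PRmmbox}. The only difference is organizational: you first isolate the observation that $S$ is a sublattice and then run the box argument directly on $S$, whereas the paper takes the meets and joins inside each summand $S_{k}$ and sums them --- a repackaging rather than a different argument.
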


\begin{proof}
(1)
First note that the only-if-part of (1) is obvious.
The if-part can be shown as follows.
Let $S_{1}, S_{2} \subseteq \ZZ\sp{n}$ be \Lnat-convex sets such that
$S = S_{1} + S_{2}$.

In the case of bounded $S$, the proof is quite easy as follows.
Each $S_k$ has the unique minimum element $a^k \in S_k$ 
and the unique maximum element $b^k \in S_k$.
Then $a = a^1 + a^2$ is the unique minimum of $S$
and
$b = b^1 + b^2$ is the unique maximum of $S$,
for which we have
$S \subseteq [a, b]_{\ZZ}$.
By Proposition~\ref{PRmmbox},
on the other hand, it follows from $a, b \in S$ and $a \leq b$ that 
$[a, b]_{\ZZ} \subseteq S$.
Thus we have proved $S = [a, b]_{\ZZ}$.

The general case with (possibly) unbounded $S$ can be treated as follows.
For each $i \in N$, put $a_{i} = \inf_{y \in S}y_{i}$ and
$b_{i} = \sup_{y \in S}y_{i}$,
where we have the possibility of 
$a_{i}=-\infty$ and/or 
$b_{i}=+\infty$.
 Obviously, $S \subseteq [a, b]_{\ZZ}$ holds.
 To prove $[a, b]_{\ZZ} \subseteq S$, take any $x \in [a, b]_{\ZZ}$. 
For each $i \in N$, there exist vectors $p\sp{i}, q\sp{i} \in S$
such that $p\sp{i}_{i} \leq x_{i} \leq q\sp{i}_{i}$,
where $p\sp{i}_{i}$, $x_{i}$, and $q\sp{i}_{i}$ denote the 
$i$th component
 of vectors $p\sp{i}$, $x$, and $q\sp{i}$, respectively.
Since $p\sp{i}, q\sp{i} \in S = S_{1} + S_{2}$,
we can express them as
$p\sp{i} = p\sp{i1} + p\sp{i2}$, 
$q\sp{i} = q\sp{i1} + q\sp{i2}$
with some $p\sp{ik}, q\sp{ik} \in S_k$ $(k = 1, 2)$.
Consider
\[
\begin{array}{ll}
\displaystyle 
\hat{p}\sp{k} = \bigwedge_{i \in N}p\sp{ik} \in S_k \quad (k = 1, 2), 
\qquad  \hat{p} = \hat{p}\sp{1} + \hat{p}\sp{2} \in S,\\
\displaystyle 
\hat{q}\sp{k} = \bigvee_{i \in N}q\sp{ik} \in S_k \quad (k = 1, 2), 
\qquad  \hat{q} = \hat{q}\sp{1} + \hat{q}\sp{2} \in S.
\end{array}
\]
Then, for each component $i \in N$,we have 
\begin{align*}
 \hat{p}_{i} &= \hat{p}\sp{1}_{i} + \hat{p}\sp{2}_{i} \leq p\sp{i1}_{i} + p\sp{i2}_{i} 
  = p\sp{i}_{i} \leq x_{i} ,
\\
 \hat{q}_{i} &= \hat{q}\sp{1}_{i} + \hat{q}\sp{2}_{i} \geq q\sp{i1}_{i} + q\sp{i2}_{i} 
 = q\sp{i}_{i} \geq x_{i},
\end{align*}
which shows $x \in [\hat{p}, \hat{q}]_{\ZZ}$.
By Proposition~\ref{PRmmbox},
it follows from $\hat{p}, \hat{q} \in S$ and $\hat{p} \leq \hat{q}$ that 
$[\hat{p}, \hat{q}]_{\ZZ} \subseteq S$.
Therefore, $x \in [\hat{p}, \hat{q}]_{\ZZ} \subseteq S$,
where $x$ is an arbitrarily chosen element of $[a, b]_{\ZZ}$.
Hence
$[a, b]_{\ZZ} \subseteq S$.
Thus we complete the proof of $S = [a, b]_{\ZZ}$.

(2)
The statement (2) for functions follows from (1) for sets
by the general principle given in 
Proposition~\ref{PRscf-box}. 
Note that the assumption \eqref{cnvABargminfp} 
holds for multimodularity and \LLnat-convexity.
\end{proof}

The following statement, with \Lnat-convexity
in place of \LLnat-convexity,
is an immediate corollary of Theorem~\ref{THl2natmmB},
since separable convex functions are \Lnat-convex.

\begin{theorem} \label{THlnatmmB}
\quad

\noindent
{\rm (1)}
A set $S$ $(\subseteq \ZZ\sp{n})$ is a box of integers
if and only if
it is both multimodular and \Lnat-convex.

\noindent
{\rm (2)}
A function $f:\ZZ\sp{n} \to \Rinf$
is separable convex
if and only if
it is both multimodular and \Lnat-convex.
\finbox
\end{theorem}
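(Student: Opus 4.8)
The plan is to deduce this as an immediate corollary of Theorem~\ref{THl2natmmB}, using the fact that every separable convex function is \Lnat-convex (and every box of integers is an \Lnat-convex set). For the set statement (1), the only-if part is covered by Theorem~\ref{THmmpolydesc}, which shows any box of integers is multimodular, and a box is trivially \Lnat-convex. For the if part, suppose $S$ is both multimodular and \Lnat-convex; since an \Lnat-convex set is in particular \LLnat-convex (one may write $S = S + \{\veczero\}$, or invoke the inclusion \eqref{fnfamL2} read at the set level), $S$ is both multimodular and \LLnat-convex, so Theorem~\ref{THl2natmmB}(1) gives that $S$ is a box of integers.

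For the function statement (2): the only-if part holds because a separable convex function is multimodular (its effective domain is a box, hence multimodular by Theorem~\ref{THmmpolydesc}, and separability of the one-dimensional convex pieces transfers; alternatively this is recorded in \eqref{fnfammultm}) and is \Lnat-convex (by \eqref{fnfamL}). Conversely, if $f$ is both multimodular and \Lnat-convex, then it is both multimodular and \LLnat-convex, so by Theorem~\ref{THl2natmmB}(2) it is separable convex.

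Since everything needed is already established, there is no real obstacle here; the only point to be careful about is the harmless inclusion "\Lnat-convex $\Rightarrow$ \LLnat-convex" for both sets and functions, which is exactly the relation recorded in Proposition~\ref{PRfnclassInc} (and, at the level of sets, follows by taking one summand to be a single point). Hence Theorem~\ref{THlnatmmB} follows. \finbox
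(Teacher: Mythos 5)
Your proposal is correct and follows exactly the paper's route: the paper also obtains Theorem~\ref{THlnatmmB} as an immediate corollary of Theorem~\ref{THl2natmmB}, using that separable convex functions are \Lnat-convex and that \Lnat-convexity implies \LLnat-convexity. Your write-up merely makes explicit the inclusion ``\Lnat-convex $\Rightarrow$ \LLnat-convex'' (via $S = S + \{\veczero\}$), which the paper leaves implicit.
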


\subsection{Multimodularity and \Mnat-convexity}
\label{SCmnatmm}

\subsubsection{Theorem}
\label{SCmnatmmth}

For $a,b \in N = \{ 1,2,\ldots, n  \}$,
we denote the set of integers between $a$ and $b$ by 
$I(a,b)$, that is,
\[
I(a,b) = \{ i \in \ZZ \mid a \leq i \leq b \}.
\]
In particular, $I(a,b) = \emptyset$ if $a >b$.
We consider an integer-valued function
$r(a,b)$, defined for all
$(a,b)$ with $1 \leq a \leq b \leq n$,
satisfying the following conditions:
\begin{align}
&r(a,a) \geq 0
\qquad (a \in N),
\label{mmMr0} \\ &
\max\{ r(a,b-1) , r(a+1,b) \} \leq r(a,b)
\qquad (a < b),
\label{mmMr1} \\ &
 r(a,b) \leq r(a,b-1) + r(a+1,b) - r(a+1,b-1)
\qquad (a < b).
\label{mmMr2} 
\end{align}

Recall the relation of \Mnat-convexity and polymatroids that
the set of integer points of an integral polymatroid
is precisely a bounded \Mnat-convex set 
containing the origin $\veczero$ and 
contained in the nonnegative orthant.  
The following theorem characterizes an integral polymatroid
that is also multimodular.
The proof is given in Section~\ref{SCmnatmmprf}.

\begin{theorem} \label{THmmMnat}
A bounded set $S$ 
containing the origin $\veczero$ and 
contained in the nonnegative orthant 
$(\veczero \in S \subseteq \ZZ_{+}\sp{n})$
is both \Mnat-convex and multimodular
if and only if it is described as 
\begin{equation} \label{mmMnatpoly}
 S= \{ x \in \ZZ_{+}\sp{n} \mid 
  x(I(a,b)) \leq r(a,b) \ (1 \leq a \leq b \leq n) \}
\end{equation}
with a function $r$ satisfying \eqref{mmMr0}--\eqref{mmMr2}.
\finbox
\end{theorem}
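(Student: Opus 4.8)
The plan is to characterize the multimodular integral polymatroids by combining the polyhedral description of multimodular sets (Theorem~\ref{THmmpolydesc}) with the polymatroid rank-function description of \Mnat-convex sets in the nonnegative orthant containing $\veczero$. First I would prove the ``if'' direction: given $S$ of the form \eqref{mmMnatpoly} with $r$ satisfying \eqref{mmMr0}--\eqref{mmMr2}, I must show $S$ is \Mnat-convex and multimodular. Multimodularity is nearly immediate from Theorem~\ref{THmmpolydesc}, since the inequalities $x(I(a,b)) \le r(a,b)$ and $x_i \ge 0$ are exactly of the form \eqref{mmineqZ} (the sets $I(a,b)$ and the singletons are consecutive, and the constraints $x_i \ge 0$ and $x_i \le +\infty$ fit the template). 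For \Mnat-convexity, I would verify that the set function $\rho$ on $N$ defined by $\rho(X) = \min \{ \sum_k r(a_k,b_k) : \{I(a_k,b_k)\}_k$ is a partition of $X$ into consecutive blocks $\}$ (equivalently, the value obtained from the tightest consecutive covering) is a monotone submodular function, so that $S$ is the set of integer points of the integral polymatroid with rank function $\rho$; conditions \eqref{mmMr0}--\eqref{mmMr2} are precisely tailored so that $r$ restricted to consecutive sets behaves like a rank function, and the key submodularity inequality \eqref{mmMr2} for ``overlapping'' consecutive intervals is the engine that makes $\rho$ submodular.

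For the ``only if'' direction, suppose $S$ is a bounded set with $\veczero \in S \subseteq \ZZ_+^n$ that is both \Mnat-convex and multimodular. By Theorem~\ref{THmmpolydesc}, $\overline{S}$ has a description by consecutive-interval inequalities $a_I \le x(I) \le b_I$; since $S \subseteq \ZZ_+^n$ and $\veczero \in S$, the lower bounds can be taken to be the trivial $x_i \ge 0$ (any binding lower inequality on a larger consecutive set would be violated at $\veczero$ or would force a coordinate to be negative), and boundedness means the upper bounds $b_I$ are finite. Define $r(a,b) := \max_{x \in S} x(I(a,b)) = b_{I(a,b)}$. Then automatically $S \subseteq \{x \in \ZZ_+^n : x(I(a,b)) \le r(a,b)\}$, and the reverse inclusion follows because Theorem~\ref{THmmpolydesc} guarantees $S$ equals the integer points of the polyhedron cut out by these (finitely many, consecutive-interval) inequalities. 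It remains to check that $r$ satisfies \eqref{mmMr0}--\eqref{mmMr2}. Nonnegativity \eqref{mmMr0} holds since $\veczero \in S$. Monotonicity \eqref{mmMr1} follows from $r(a,b-1) = \max_{x\in S} x(I(a,b-1)) \le \max_{x\in S} x(I(a,b)) = r(a,b)$ using $x \ge 0$, and similarly for $r(a+1,b)$. The submodular-type inequality \eqref{mmMr2} is where \Mnat-convexity enters: I would take $x \in S$ achieving $x(I(a,b)) = r(a,b)$ and $y \in S$ achieving $y(I(a+1,b-1)) = r(a+1,b-1)$, then use the exchange property \Mnvexb to move between $x$ and $y$ and construct elements of $S$ witnessing $r(a,b-1)$ and $r(a+1,b)$, yielding $r(a,b) + r(a+1,b-1) \le r(a,b-1) + r(a+1,b)$; alternatively one can argue via the polymatroid rank function of $S$ and the fact that the rank of a consecutive interval is $r$, invoking rank submodularity on the pair $I(a,b-1), I(a+1,b)$ whose union is $I(a,b)$ and intersection is $I(a+1,b-1)$.

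I expect the main obstacle to be the ``only if'' verification of the submodularity inequality \eqref{mmMr2}, and more precisely establishing that the function $r$ (defined only on consecutive intervals) extends to a bona fide submodular rank function on all subsets whose value on consecutive intervals is $r$ --- i.e., that the polymatroid whose integer points are $S$ really does have rank $r(a,b)$ on each $I(a,b)$. The subtlety is that $\max_{x\in S} x(I)$ is the rank of $I$ in the polymatroid $\overline{S}$ only because there are no other binding constraints interfering, which is exactly what the consecutive-interval structure from Theorem~\ref{THmmpolydesc} provides; once that identification is made, \eqref{mmMr2} is just the restriction of polymatroid rank submodularity $\rho(I_1) + \rho(I_2) \ge \rho(I_1 \cup I_2) + \rho(I_1 \cap I_2)$ to $I_1 = I(a,b-1)$, $I_2 = I(a+1,b)$. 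Conversely, on the ``if'' side, the analogous obstacle is proving that $\rho$ built from $r$ by ``optimal consecutive partition/covering'' is submodular on arbitrary subsets of $N$, not just on intervals; here one uses an exchange argument on optimal partitions together with \eqref{mmMr1}--\eqref{mmMr2} to handle the interaction of overlapping blocks, and \eqref{mmMr0} to discard empty pieces.
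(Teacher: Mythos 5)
Your plan follows the paper's proof essentially step for step: the if-direction gets multimodularity from Theorem~\ref{THmmpolydesc} and \Mnat-convexity by extending $r$ to a monotone submodular set function $\rho$ via the decomposition of an arbitrary $X\subseteq N$ into consecutive blocks, while the only-if direction uses multimodularity to discard the non-consecutive polymatroid constraints and reads off $r(a,b)=\rho(I(a,b))$, with \eqref{mmMr2} coming from rank submodularity applied to $I(a,b-1)$ and $I(a+1,b)$. The one step you leave as a sketch --- submodularity of the extended $\rho$ on arbitrary, non-consecutive subsets --- is exactly the technical core of the paper's argument (a separate inductive lemma, Lemma~\ref{LMmmMnatSubm}), so that is where essentially all of the remaining work lies.
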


\begin{remark} \rm \label{RMfnrab}
The function $r$ can be interpreted as a set function $\rho$
defined for consecutive subsets of $N$
by $\rho(I(a,b))=r(a,b)$.
The conditions
\eqref{mmMr0}, \eqref{mmMr1}, and \eqref{mmMr2} 
correspond, respectively, to nonnegativity, monotonicity, and submodularity of $\rho$
(see Section~\ref{SCmnatmmprf} for the precise meaning).
Every function $r$ satisfying \eqref{mmMr0}--\eqref{mmMr2} 
can be constructed as follows.
First assign arbitrary nonnegative integers to $r(a,a)$ for $a \in N$.
Next, for each $a \in N - \{ n \}$, 
assign to $r(a,a+1)$ an arbitrary integer between
$\max\{ r(a,a), r(a+1,a+1) \}$ and $r(a,a) + r(a+1,a+1)$.
Then, for $k=2,3,\ldots,n-1$, define $r(a,b)$ with $b-a=k$
that satisfy \eqref{mmMr1} and \eqref{mmMr2},
which is possible because the right-hand side of \eqref{mmMr2} 
is not smaller than the left-hand side of \eqref{mmMr1}. 
\finbox
\end{remark}

It follows from Theorem~\ref{THmmMnat} with 
Theorem~\ref{THmmpolydesc} that, 
if a bounded set $S$ is \Mnat-convex and multimodular, 
then the set of its maximal elements 
$S_{\max}$ is  M-convex and multimodular
(Example~\ref{EXmnatmmdim3}).
However, the converse is not true
(Example~\ref{EXmnatNOTmmdim3}).

\begin{example}   \rm  \label{EXmnatmmdim3}
Let $S$ be the set of integer points of the polymatroid in Fig.~\ref{FGmnatmm}(a).
This set can be described in the form of \eqref{mmMnatpoly} as
\[
S= 
\{x\in \ZZ_{+}^3 \mid  x_i\leq 3~(i=1,2,3), \ 
x_{1}+x_{2}\leq 5, \ x_{2}+x_{3}\leq 5, \ x_{1}+x_{2}+x_{3} \leq 6 \},
\]
which does not involve an inequality for $x_{1}+x_{3}$ corresponding to a non-consecutive 
subset $\{ 1, 3 \}$.
By Theorem~\ref{THmmMnat}, $S$ is both \Mnat-convex and multimodular.
The set of the maximal elements
\[
S_{\max}= \{ (3,0,3), (1,2,3), (3,2,1), (1,3,2), (2,3,1),      (2,1,3), (2,2,2) ,(3,1,2) \}
\]
is both M-convex and multimodular.
\finbox
\end{example}

\begin{example}   \rm  \label{EXmnatNOTmmdim3}
Let $S$ be the set of integer points of the polymatroid in Fig.~\ref{FGmnatmm}(b).
This set can be described as
\begin{align}
S= &
\{x\in \ZZ_{+}^3 \mid  x_i\leq 3~(i=1,2,3), \ 
x_{1}+x_{2}\leq 5, \ x_{1}+x_{3}\leq 5,
\nonumber \\ & \phantom{AAAAAA}
 x_{2}+x_{3}\leq 5, \ x_{1}+x_{2}+x_{3} \leq 6 \}.
\label{mnatNOTmmdim3}
\end{align}  
Since this expression involves a (non-redundant) inequality for $x_{1}+x_{3}$ 
corresponding to a non-consecutive subset $\{ 1, 3 \}$,
Theorem~\ref{THmmMnat}
(or Theorem~\ref{THmmpolydesc})
 shows that $S$ is not multimodular.
While $S$ is not multimodular,
the set of its maximal elements 
\begin{align*}
S_{\max} & =
\{x\in \ZZ^3 \mid  x_i\leq 3~(i=1,2,3), \ 
x_{1}+x_{2}\leq 5, \ x_{1}+x_{3}\leq 5,
\nonumber \\ & \phantom{= AAAAAA}
 x_{2}+x_{3}\leq 5, \  x_{1}+x_{2}+x_{3} = 6 \}
\nonumber \\ & =
\{x\in \ZZ^3 \mid  1 \leq x_i \leq 3~(i=1,2,3),  \ x_{1}+x_{2}+x_{3} = 6 \}
\end{align*}  
is multimodular by Theorem~\ref{THmmpolydesc}.
Thus the set $S_{\max}$ is both M-convex and multimodular
in spite of the fact that
$S$ itself is \Mnat-convex and not multimodular.
\finbox
\end{example}

\begin{example}  \rm  \label{EXexmnatNOTmmdim4}
Here is an example of an M-convex set that is not multimodular.
Using the set $S$ in \eqref{mnatNOTmmdim3}, consider
\[
\hat S = \{ x \in \ZZ\sp{4} \mid (x_{1},x_{2},x_{3}) \in S, \ x_{4}= 6 -  (x_{1}+x_{2}+x_{3}) \},
\]
which is the M-convex set treated in Example~\ref{EXnorelMnotLmmdim4}.
Here we show that $\hat S$ is not multimodular.
To use Proposition~\ref{PRmultmsetLnatset}, consider
the set
$\hat T = \{ D\sp{-1} x \mid x \in \hat S \}$.
Discrete midpoint convexity fails
for
$y = (2,2,5,6) \in \hat T$ 
(corresponding to $(2,0,3,1) \in \hat S$)
and
$z= (3,4,6,6) \in \hat T$
(corresponding to $(3,1,2,0) \in \hat S$)
with
$(y+z)/2 = (5/2,3,11/2,6)$ and
$\lceil (y+z)/2  \rceil = (3,3,6,6)$,
where 
$w = (3,3,6,6)$
is not contained in $\hat T$
since  the corresponding vector $D w = (3,0,3,0)$ is not in $\hat S$.
\finbox
\end{example}

Finally, we make an observation for the case of three variables.

\begin{proposition} \label{PRmmMdim3}
\quad

\noindent
{\rm (1)}
When $n=3$, every M-convex set is multimodular.

\noindent
{\rm (2)}
When $n=3$, every M-convex function is multimodular.
\end{proposition}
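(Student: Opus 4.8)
The plan is to obtain both statements from the polyhedral description of multimodular sets in Theorem~\ref{THmmpolydesc}, exploiting the fact that for $N=\{1,2,3\}$ the only non-consecutive proper nonempty subset is $\{1,3\}$, and its complement $\{2\}$ is consecutive.

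For (1), I would start from the standard correspondence between M-convex sets and integral base polyhedra: an M-convex set $S\subseteq\ZZ^3$ is the set of integer points of its convex hull $\overline{S}$, and $\overline{S}$ is a base polyhedron $B(\rho)=\{x\in\RR^3\mid x(N)=\rho(N),\ x(X)\le\rho(X)\ (X\subsetneq N)\}$ for some integer-valued submodular $\rho$ with $\rho(\emptyset)=0$, where $\rho$ may take the value $+\infty$ on some sets when $S$ is unbounded. Now inspect the defining relations for $n=3$: the index sets $\{1\},\{2\},\{3\},\{1,2\},\{2,3\}$ and $N$ itself are all consecutive, so the corresponding inequalities $x(X)\le\rho(X)$, together with the implied lower bounds $x(X)\ge\rho(N)-\rho(N\setminus X)$ coming from $x(N)=\rho(N)$, are already of the form appearing in \eqref{mmineqZ}. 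The one remaining constraint, $x_1+x_3\le\rho(\{1,3\})$, is equivalent, on the hyperplane $x_1+x_2+x_3=\rho(N)$, to the lower bound $x_2\ge\rho(N)-\rho(\{1,3\})$ on $x(\{2\})$, which is again a consecutive-subset bound (with the bound equal to $-\infty$ when $\rho(\{1,3\})=+\infty$). Hence $S$ admits a representation of the form \eqref{mmineqZ} with integer (or infinite) bounds, and Theorem~\ref{THmmpolydesc} shows $S$ is multimodular.

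For (2), I would invoke Theorem~\ref{THmmf-fp}. An M-convex function $f:\ZZ^3\to\Rinf$ is convex-extensible and has $\overline{\dom f}$ equal to a base polyhedron, so it satisfies assumption \eqref{ABCfunAssm}; hence $f$ is multimodular if and only if $\argmin f[-p]$ is a multimodular set for every $p\in\RR^3$. For each $p$, the function $f[-p]$ is again M-convex, so $\argmin f[-p]$, whenever nonempty, is an M-convex set, hence multimodular by part (1). This yields (2).

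All the real content sits in part (1), and concretely in the observation that a bound on a non-consecutive subset can be rewritten, via the constant component-sum equation, as a bound on its (consecutive) complement. The only point I expect to need a careful line is the verification that the canonical submodular function $\rho$ representing $\overline{S}$ can be taken integer-valued, which is precisely the integrality encoded in the notion of M-convex set, so that the bounds in the resulting system \eqref{mmineqZ} are integers as Theorem~\ref{THmmpolydesc} requires. This mechanism also explains why the statement breaks down for $n\ge 4$: there $\{1,3\}$ and its complement $\{2,4\}$ are both non-consecutive, in accordance with Examples~\ref{EXnorelMnotLmmdim4} and~\ref{EXexmnatNOTmmdim4}.
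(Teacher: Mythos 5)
Your proposal is correct and follows essentially the same route as the paper: represent the M-convex set by its base-polyhedron inequalities, observe that for $n=3$ the only non-consecutive subset $\{1,3\}$ can be traded, via the equality $x_{1}+x_{2}+x_{3}=\rho(N)$, for a lower bound on the consecutive subset $\{2\}$, and then apply Theorem~\ref{THmmpolydesc}; part (2) is reduced to part (1) via Theorem~\ref{THmmf-fp} exactly as in the paper. Your extra care about integrality and possibly infinite values of $\rho$ for unbounded $S$ is a reasonable refinement of a point the paper passes over.
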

\begin{proof}
(1)
Let $S$ be an M-convex set in $\ZZ\sp{3}$.
Then $S$ is described (cf., \eqref{msetineq}) as
\[
S= \{x\in \ZZ\sp{3} \mid  x(X) \leq \rho(X) \ (X \subset N),  \ x_{1}+x_{2}+x_{3} =  \rho(N) \},
\]
where $N=\{ 1,2,3 \}$. 
Among the subsets of $N$, 
$X= \{ 1, 3\}$ is the only non-consecutive subset.
With the use of the equality constraint $x_{1}+x_{2}+x_{3} =  \rho(N)$,
the corresponding inequality
$x_{1} +x_{3} \leq  \rho(\{ 1,3 \})$
can be rewritten as
$x_{2} \geq \rho(N) - \rho(\{ 1,3 \})$,
which is an inequality constraint for a consecutive subset $\{ 2 \}$.
Then $S$ is multimodular by Theorem~\ref{THmmpolydesc}.

(2)
By Theorem~\ref{THmmf-fp},
the statement (2) for functions follows from (1) for sets.
\end{proof}

\subsubsection{Proof}
\label{SCmnatmmprf}

This section is devoted to the proof of Theorem~\ref{THmmMnat}.

For the proof of the if-part, 
suppose that $S$ is described as \eqref{mmMnatpoly}
with $r$ satisfying \eqref{mmMr0}--\eqref{mmMr2},
where $S$ is nonempty since $\veczero \in S$.
Then $S$ is multimodular by Theorem~\ref{THmmpolydesc}.
To discuss \Mnat-convexity, we define a set function $\rho$ from the given $r$.
We represent a subset $X$ of $N$ as a union of
disjoint consecutive subsets:
\[
X = I(a_{1},b_{1}) \cup I(a_{2},b_{2}) \cup \cdots \cup I(a_{m},b_{m})
\]
with 
$a_{1} \leq b_{1} < a_{2} \leq b_{2} < \cdots < a_{m} \leq b_{m}$,
and define $\rho(X)$ by
\begin{equation} \label{mmMnatpRhofromR}
\rho(X) = r(a_{1},b_{1})  + r(a_{2},b_{2}) + \cdots + r(a_{m},b_{m}),
\end{equation}
where $\rho(\emptyset)= 0$.
Then we have
\begin{equation} \label{mmMnatpRhoI}
 \rho(X) = \rho(I(a_{1},b_{1})) + \rho(I(a_{2},b_{2})) +  \cdots + \rho(I(a_{m},b_{m})).
\end{equation}
For any $x \in S$ and $X \subseteq N$, we have
\[
 x(X) = \sum_{j=1}\sp{m} x(I(a_{j},b_{j})) 
       \leq \sum_{j=1}\sp{m} r(a_{j},b_{j})  = \rho(X),
\]
which shows that inequalities
$x(X)  \leq \rho(X)$
for non-consecutive subsets $X$ may be added to the expression in \eqref{mmMnatpoly}. 
Therefore we have
\begin{equation*}
S =  \{ x \in \ZZ_{+}\sp{n} \mid x(X) \leq \rho(X) \ (X \subseteq N) \}.
\end{equation*}
By the construction, $\rho$ is monotone (nondecreasing) with $\rho(\emptyset)= 0$.
Moreover, $\rho$ is submodular, which is shown in Lemma~\ref{LMmmMnatSubm}
at the end of this section.
Therefore, by \eqref{polymatineq}, 
$S$ is an \Mnat-convex set. 
This completes the proof of the if-part of Theorem~\ref{THmmMnat}.

\medskip

For the proof of the only-if part,  let $S$ be a bounded
\Mnat-convex and multimodular set 
satisfying $\veczero \in S \subseteq \ZZ_{+}\sp{n}$.
By \eqref{polymatineq} for an \Mnat-convex set, we have
\begin{equation*} 
 S= \{ x \in \ZZ_{+}\sp{n} \mid  x(X) \leq \rho(X) \ (X \subseteq N) \}
\end{equation*}
with a nondecreasing integer-valued submodular function
$\rho: 2\sp{N} \to \ZZ$
with $\rho(\emptyset) = 0$, where we may assume that $\rho$ is tight 
in the sense of $\rho(X) = \max \{ x(X) \mid x \in S \}$
$(X \subseteq N)$.
Since $S$ is multimodular, every inequality 
$x(X) \leq \rho(X)$ for a non-consecutive subset $X$
must be redundant by Theorem~\ref{THmmpolydesc}.
Hence we have
\[
 S= \{ x \in \ZZ_{+}\sp{n} \mid 
 x(I) \leq \rho(I) \ \ (\mbox{\rm $I$: consecutive subset of $N$}) \}.
\]
This coincides with the expression \eqref{mmMnatpoly}
for $r(a,b)$ defined by $r(a,b) = \rho(I(a,b))$.
Note that $r(a,b)$ satisfies \eqref{mmMr0}--\eqref{mmMr2}
by the properties \eqref{polymatr1}--\eqref{polymatr3} of $\rho$.
This completes the proof of the only-if-part of Theorem~\ref{THmmMnat}.

\medskip

Finally we show the submodularity of $\rho$ in \eqref{mmMnatpRhofromR}.

\begin{lemma} \label{LMmmMnatSubm}
$\rho$ in \eqref{mmMnatpRhofromR} is submodular.
\end{lemma}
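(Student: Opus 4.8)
The plan is to establish the submodularity of $\rho$ via the standard local (``element-exchange'') characterization: it suffices to verify that
\[
 \rho(Z\cup\{i\}) + \rho(Z\cup\{j\}) \ \ge\ \rho(Z) + \rho(Z\cup\{i,j\})
\]
for every $Z\subseteq N$ and every pair of distinct indices $i,j\in N\setminus Z$. By symmetry we may assume $i<j$, and the inequality is equivalent to the diminishing-marginal form $\rho(Z\cup\{i\})-\rho(Z)\ge \rho(Z\cup\{i,j\})-\rho(Z\cup\{j\})$. Throughout we treat positions $0$ and $n+1$ as lying outside every subset, and set $r(p,q):=0$ when $I(p,q)=\emptyset$, consistently with $\rho(\emptyset)=0$.

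The first step is to upgrade the one-step conditions \eqref{mmMr1} and \eqref{mmMr2} on $r$ to their global counterparts on intervals: \emph{(i)} $r(a,b)+r(c,d)\ge r(a,d)+r(c,b)$ whenever $a\le c\le b\le d$, and \emph{(ii)} $r(a,m)+r(m+1,b)\ge r(a,b)$ whenever $a\le m<b$. Property (i) is proved by induction on $(c-a)+(d-b)$: if $c=a$ or $d=b$ it is an identity, and otherwise one bounds the term $r(a,d)$ on the right by \eqref{mmMr2} applied to $I(a,d)$ and recombines, so that what remains is the sum of two instances of (i) with strictly smaller parameter. Property (ii) then follows from (i) together with \eqref{mmMr2} by a similar induction on $b-a$. (These two facts simply express that $r$ is a nonnegative, monotone, submodular function on the lattice of intervals; alternatively, one may identify $\rho$ as the Dilworth truncation of this intersecting-submodular function and invoke the theorem that such truncations are submodular.)

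The second step is a case analysis on the structure of $Z$ near $i$ and $j$. For a set $W\not\ni i$, the marginal $\rho(W\cup\{i\})-\rho(W)$ is read off from how inserting $i$ acts on the maximal consecutive runs of $W$: it equals $r(i,i)$ if $i-1,i+1\notin W$; it equals $r(\text{enlarged run})-r(\text{former run})$ if exactly one of $i\pm 1$ lies in $W$; and, if $i-1\in W$ with run $I(\alpha,i-1)$ and $i+1\in W$ with run $I(i+1,\beta)$, it equals $r(\alpha,\beta)-r(\alpha,i-1)-r(i+1,\beta)$. If $j\ge i+2$, then passing from $Z$ to $Z\cup\{j\}$ does not disturb the run ending at $i-1$ and can only enlarge the run starting at $i+1$, changing its right endpoint from $\beta$ to some $\beta'\ge\beta$; in each sub-case the required inequality is exactly an instance of (i) applied to the intervals $I(\alpha,\beta)$ and $I(i+1,\beta')$ (or $I(i,\beta)$ and $I(i+1,\beta')$ when $i-1\notin Z$). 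If $j=i+1$, one splits further according to whether $i-1\in Z$ and whether $i+2\in Z$; expanding the marginals as above, each of the four sub-cases collapses to the subadditivity bound $r(\alpha,m)+r(m+1,\gamma)\ge r(\alpha,\gamma)$ from (ii).

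The step I expect to be the real work is this case analysis, or rather the bookkeeping inside it: one has to track exactly how inserting the larger index $j$ reshapes the run of $Z$ containing $i+1$ — when $j=\beta+1$ that run absorbs the entire run immediately to its right — and to keep the boundary positions and the empty-interval convention straight. Once the relevant intervals are correctly identified in each case, the inequality is immediate from (i) and (ii) above.
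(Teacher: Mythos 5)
Your proposal is correct, but it proves the lemma by a genuinely different route than the paper. The paper runs a single ``top-down'' induction on the cardinality of a ground set $Z$, reducing full submodularity to the local condition $\rho(Z)+\rho(Z-\{p,q\})\le\rho(Z-p)+\rho(Z-q)$; the essential case ($Z$ consecutive) is settled by one application of \eqref{mmMr2} to $Z$ followed by two invocations of the induction hypothesis, so the one-step conditions on $r$ are never amplified explicitly. You instead work ``bottom-up'': you first upgrade \eqref{mmMr1}--\eqref{mmMr2} to the two global interval inequalities (submodularity of $r$ on overlapping intervals, and subadditivity over adjacent splits), and only then verify the diminishing-returns form $\rho(Z\cup\{i\})-\rho(Z)\ge\rho(Z\cup\{i,j\})-\rho(Z\cup\{j\})$ by tracking how inserting $i$ and $j$ reshapes the maximal runs of $Z$. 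I checked the inductions for your properties (i) and (ii) and the run case analysis (including the merge case $j=i+1$, which reduces exactly to (ii), and the case $j\ge i+2$, which reduces exactly to (i) with intervals $I(\alpha,\beta)$ and $I(i+1,\beta')$); they go through, with the empty-interval convention $r(p,q)=0$ for $p>q$ doing the expected work at the boundaries. What your approach buys is transparency: it isolates the two interval inequalities as the real content of the hypotheses on $r$, and your aside identifying $\rho$ as the Dilworth truncation of the intersecting-submodular interval function $r$ (extended by $+\infty$ off intervals) gives a one-line conceptual proof via the standard truncation theorem. What the paper's approach buys is economy: a single induction with no separate amplification step and a lighter case analysis. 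The bookkeeping you flag as the real work is indeed where all the care is needed, but as outlined it is sound.
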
 
\begin{proof}
For any subset $Z$ of $N$, we consider submodularity of $\rho$ 
restricted to subsets of $Z$:
\begin{equation} \label{sbmZ}
 \rho(X) + \rho(Y) \geq  \rho(X \cup Y) + \rho(X \cap Y) 
\qquad (X, Y \subseteq Z).
\end{equation}
We prove this by induction on $\alpha=|Z|$.

Obviously, \eqref{sbmZ} is true when $|Z|=1$.
When $|Z|=2$, we have two cases, depending on whether $Z= \{ a,b \}$ is consecutive or not.
If $a+1 =b$, we may assume
$X = \{ a \}$ and $Y = \{ a+1 \}$, for which 
\eqref{sbmZ} follows from \eqref{mmMr2}
since 
$\rho(X) = r(a,a)$,
$\rho(Y) = r(a+1,a+1)$,
$\rho(X \cup Y) = r(a,a+1)$,
 and
$\rho(X \cap Y) = 0$.
If $a+1 < b$, we may assume
$X = \{ a \}$ and $Y = \{ b \}$, for which 
\eqref{sbmZ} holds with equality 
by \eqref{mmMnatpRhoI} and $\rho(X \cap Y) = 0$.

Now assume $\alpha = |Z| \geq 3$.
To prove \eqref{sbmZ} under the induction hypothesis, it suffices to show 
\begin{equation} \label{sbmZloc}
 \rho(Z) + \rho(Z - \{ p, q \}) - \rho(Z - p)- \rho(Z - q) \leq 0
\qquad (p < q; p, q \in Z),
\end{equation}
where $\rho(Z -p)$ means $\rho(Z \setminus \{ p \})$, etc.
We divide the subsequent argument into three cases.
Case (a):
$Z$ is a consecutive set;
Case (b): 
$Z = I_{1} \cup I_{2} \cup \cdots \cup I_{m}$
with consecutive sets $I_{1}, I_{2}, \ldots, I_{m}$
($m \geq 2$), and $p, q \in I_{k}$ for some $k$;
Case (c):
$Z = I_{1} \cup I_{2} \cup \cdots \cup I_{m}$
as in (b), but $p \in I_{k}$ and $q \in I_{l}$ for some $k \ne l$.
It turns out that Case (a) is the essential case.

\medskip

Case (a): 
Let $Z = I(a,b)$.  We have $a \leq p < q \leq b$.
Define
\[
I = \{ i \in Z \mid i < p \},
\quad
J = \{ i \in Z \mid p < i < q \},
\quad
K = \{ i \in Z \mid i > q \}.
\]
We have
\[
 \rho(Z) \leq \rho(Z - a) + \rho(Z - b) - \rho(Z - \{ a, b \})
\]
by \eqref{mmMr2}, whereas \eqref{mmMnatpRhoI} gives
\begin{align*}
 \rho(Z - \{ p, q \}) & =  \rho(I) + \rho(J) + \rho(K),
\\
\rho(Z - p ) & =  \rho(I) + \rho(J q K),
\\
\rho(Z - q)  & =   \rho(I p J) + \rho(K),
\end{align*}
where
$\rho(J  q  K)$ means $\rho(J \cup \{ q \} \cup K)$, etc.
Substituting these into the left-hand side of \eqref{sbmZloc}, we obtain
\begin{align}
& \mbox{LHS of \eqref{sbmZloc}}
\nonumber \\ & 
 \leq 
  \rho(Z - a) + \rho(Z - b) - \rho(Z - \{ a, b \}) 
 + \rho(J) - \rho(J q K) - \rho(I p J) 
\nonumber \\ & =
 [ \rho(Z - a) + \rho(J q K - b) - \rho(Z - \{ a, b \}) - \rho(J q K) ]
\nonumber \\ & \phantom{= } \ {} 
 + [ \rho(Z - b) + \rho(J) - \rho(I p J) - \rho(J q K - b) ]
\nonumber \\ & = [ \rho(X' \cup Y') + \rho(X' \cap Y') - \rho(X') - \rho(Y') ]
\nonumber \\ & \phantom{= } \ {} 
{} + [ \rho(X'' \cup Y'') + \rho(X'' \cap Y'') - \rho(X'') - \rho(Y'') ] ,
\label{sbmZlocLHS}
\end{align}
where
$X' := Z - \{ a, b \}$ and $Y':= J q K$, 
for which
$Z - a = X' \cup Y'$ and $J q K - b = X' \cap Y'$,
and
$X'':=I p J$ and $Y'':=J q K - b$ $(= X' \cap Y')$, 
for which
$Z - b =  X'' \cup Y''$ and $J = X'' \cap Y''$.
Since
$|Z - a| = |Z - b| = \alpha -1$,
\eqref{sbmZlocLHS}
is non-positive ($\leq 0$) by the induction hypothesis 
\eqref{sbmZ} for $\alpha -1$.

Case (b): 
By \eqref{mmMnatpRhoI} we have
\begin{align*}
 &  \rho(Z) =
\rho(I_{k}) +
\sum_{j \ne k} \rho(I_{j}) ,
\quad
\rho(Z - \{ p, q \})  =
\rho(I_{k} - \{ p, q \} ) +
\sum_{j \ne k} \rho(I_{j})  ,
\\
& \rho(Z - p) =
\rho(I_{k} - p) +
\sum_{j \ne k} \rho(I_{j}) ,
\quad
 \rho(Z - q) =
\rho(I_{k} - q) +
\sum_{j \ne k} \rho(I_{j})  .
\end{align*}
Therefore,
\[
\mbox{LHS of \eqref{sbmZloc}}
=   \rho(I_{k}) + \rho(I_{k} - \{ p, q \})  - \rho(I_{k}-p) - \rho(I_{k}-q),
\]
which is non-positive ($\leq 0$) by the induction hypothesis 
\eqref{sbmZ} since 
$|I_{k}| \leq \alpha -1$.

Case (c): 
By \eqref{mmMnatpRhoI} we have
\begin{align*}
 &  \rho(Z) =
\rho(I_{k}) + \rho(I_{l}) +
\sum_{j \ne k,l} \rho(I_{j}) ,
\quad
\rho(Z - \{ p, q \})  =
\rho(I_{k} - p ) + \rho(I_{l} - q) +
\sum_{j \ne k,l} \rho(I_{j})  ,
\\
& \rho(Z - p) =
\rho(I_{k} - p) + \rho(I_{l}) +
\sum_{j \ne k,l} \rho(I_{j}) ,
\quad
 \rho(Z - q) =
\rho(I_{k}) + \rho(I_{l} - q) +
\sum_{j \ne k,l} \rho(I_{j})  .
\end{align*}
Therefore,
$\mbox{LHS of \eqref{sbmZloc}}=0$.

Thus we have shown \eqref{sbmZloc} in all cases (a), (b), and (c).
Therefore, \eqref{sbmZ} holds when $|Z|=\alpha$.
\end{proof}

\appendix

\section{Definitions of Discrete Convex Functions}
\label{SCdiscfndef}

This section offers definitions of various concepts of discrete convex functions
such as 
integrally convex,  
{\rm L}-convex, and
{\rm M}-convex functions.
The definition of multimodular functions is given in Section~\ref{SCmmfnDef}.
We consider functions defined on integer lattice points, 
$f: \ZZ\sp{n} \to \RR \cup \{ +\infty \}$,
where the function may possibly take $+\infty$ but 
it is assumed that the effective domain, 
$\dom f =\{ x \mid f(x) < + \infty \}$, is nonempty.

\subsection{Separable convexity}

For integer vectors 
$a \in (\ZZ \cup \{ -\infty \})\sp{n}$ and 
$b \in (\ZZ \cup \{ +\infty \})\sp{n}$ 
with $a \leq b$,
$[a,b]_{\ZZ}$ denotes the box of integers  
(discrete rectangle, integer interval)
between $a$ and $b$,
i.e.,
\begin{equation} \label{intboxdef}
[a,b]_{\ZZ} = \{ x \in \ZZ\sp{n} \mid a_{i} \leq x_{i} \leq b_{i} \ (i=1,2,\ldots,n)  \}.
\end{equation}

A function
$f: \ZZ^{n} \to \RR \cup \{ +\infty \}$
in $x=(x_{1}, x_{2}, \ldots,x_{n}) \in \ZZ^{n}$
is called  {\em separable convex}
if it can be represented as
\begin{equation}  \label{sepfndef}
f(x) = \varphi_{1}(x_{1}) + \varphi_{2}(x_{2}) + \cdots + \varphi_{n}(x_{n})
\end{equation}
with univariate functions
$\varphi_{i}: \ZZ \to \RR \cup \{ +\infty \}$ satisfying 
\begin{equation}  \label{univarconvdef}
\varphi_{i}(t-1) + \varphi_{i}(t+1) \geq 2 \varphi_{i}(t)
\qquad (t \in \ZZ),
\end{equation}
where $\dom \varphi_{i}$ is an interval of integers.

\subsection{Integral convexity}

For $x \in \RR^{n}$ the integral neighborhood of $x$ is defined as 
\begin{equation}  \label{intneighbordef}
N(x) = \{ z \in \ZZ^{n} \mid | x_{i} - z_{i} | < 1 \ (i=1,2,\ldots,n)  \}.
\end{equation}
It is noted that 
strict inequality ``\,$<$\,'' is used in this definition
and hence $N(x)$ admits an alternative expression
\begin{equation}  \label{intneighbordeffloorceil}
N(x) = \{ z \in \ZZ\sp{n} \mid
\lfloor x_{i} \rfloor \leq  z_{i} \leq \lceil x_{i} \rceil  \ \ (i=1,2,\ldots, n) \} .
\end{equation}
For a set $S \subseteq \ZZ^{n}$
and $x \in \RR^{n}$
we call the convex hull of $S \cap N(x)$ 
the {\em local convex hull} of $S$ at $x$.
A nonempty set $S \subseteq \ZZ^{n}$ is said to be 
{\em integrally convex} if
the union of the local convex hulls $\overline{S \cap N(x)}$ over $x \in \RR^{n}$ 
is convex \cite{Mdcasiam}.
This is equivalent to saying that,
for any $x \in \RR^{n}$, 
$x \in \overline{S} $ implies $x \in  \overline{S \cap N(x)}$.

For a function
$f: \ZZ^{n} \to \RR \cup \{ +\infty  \}$
the {\em local convex extension} 
$\tilde{f}: \RR^{n} \to \RR \cup \{ +\infty \}$
of $f$ is defined 
as the union of all convex envelopes of $f$ on $N(x)$.  That is,
\begin{equation} \label{fnconvclosureloc2}
 \tilde f(x) = 
  \min\{ \sum_{y \in N(x)} \lambda_{y} f(y) \mid
      \sum_{y \in N(x)} \lambda_{y} y = x,  \ 
  (\lambda_{y})  \in \Lambda(x) \}
\quad (x \in \RR^{n}) ,
\end{equation} 
where $\Lambda(x)$ denotes the set of coefficients for convex combinations indexed by $N(x)$:
\[ 
  \Lambda(x) = \{ (\lambda_{y} \mid y \in N(x) ) \mid 
      \sum_{y \in N(x)} \lambda_{y} = 1, 
      \lambda_{y} \geq 0 \ \ \mbox{for all } \   y \in N(x)  \} .
\] 
If $\tilde f$ is convex on $\RR^{n}$,
then $f$ is said to be {\em integrally convex}
\cite{FT90}.
The effective domain of an integrally convex function is an integrally convex set.
A set $S \subseteq \ZZ\sp{n}$ is integrally convex if and only if its indicator function
$\delta_{S}: \ZZ\sp{n} \to \{ 0, +\infty \}$
is an integrally convex function.

Integral convexity of a function can be characterized by a local condition
under the assumption that the effective domain is an integrally convex set.

\begin{theorem}[\cite{FT90,MMTT19proxIC}]
\label{THfavtarProp33}
Let $f: \ZZ^{n} \to \RR \cup \{ +\infty  \}$
be a function with an integrally convex effective domain.
Then the following properties are equivalent:


{\rm (a)}
$f$ is integrally convex.

{\rm (b)}
For every $x, y \in \ZZ\sp{n}$ with $\| x - y \|_{\infty} =2$  we have \ 
\begin{equation}  \label{intcnvconddist2}
\tilde{f}\, \bigg(\frac{x + y}{2} \bigg) 
\leq \frac{1}{2} (f(x) + f(y)).
\end{equation}
\vspace{-1.7\baselineskip} \\
\finbox
\end{theorem}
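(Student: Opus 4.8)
The implication (a) $\Rightarrow$ (b) is immediate: if $f$ is integrally convex then $\tilde f$ is convex on $\RR^{n}$, and since the trivial convex combination in \eqref{fnconvclosureloc2} gives $\tilde f(z) = f(z)$ for every $z \in \ZZ^{n}$, we obtain $\tilde f(\frac{x+y}{2}) \leq \frac{1}{2}(\tilde f(x) + \tilde f(y)) = \frac{1}{2}(f(x) + f(y))$ for all $x, y \in \ZZ^{n}$ (no restriction on $\|x - y\|_{\infty}$ is needed for this direction).

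For the converse (b) $\Rightarrow$ (a), here is the plan. Because $\dom f$ is integrally convex, the local convex hulls $\overline{\dom f \cap N(x)}$ cover the convex set $\overline{\dom f}$, so $\tilde f$ is a continuous, (locally) piecewise-linear function on $\overline{\dom f}$: it is affine on each simplex of the triangulation of a unit cube $[z, z+\vecone]$ induced by the lower faces of the convex hull of $\{(v, f(v)) \mid v \in [z, z+\vecone] \cap \ZZ^{n}\}$, and in particular $\tilde f$ is convex on each individual unit cube. Hence $\tilde f$ is globally convex if and only if it is convex across the facets shared by adjacent unit cubes, i.e., it suffices to verify the midpoint inequality $\tilde f(\frac{p+q}{2}) \leq \frac{1}{2}(\tilde f(p) + \tilde f(q))$ when $p$ and $q$ lie in two unit cubes meeting in a facet contained in a hyperplane $\{x_{j} = c\}$ with $c \in \ZZ$.

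The key reduction is to bring such a cross-facet inequality down to hypothesis (b). Writing $\tilde f(p)$ and $\tilde f(q)$ as convex combinations of $f$-values over the lattice points of $N(p)$ and $N(q)$, one pairs these lattice points by a transportation-type argument so that $\frac{p+q}{2}$ becomes a convex combination of midpoints of paired integer points; this reduces matters to proving $\tilde f(\frac{x+y}{2}) \leq \frac{1}{2}(f(x) + f(y))$ for all $x, y \in \ZZ^{n}$. That inequality is then established by induction on $\|x - y\|_{\infty}$: it is trivial when $\|x - y\|_{\infty} \leq 1$ (put weights $\frac{1}{2}, \frac{1}{2}$ on $x, y \in N(\frac{x+y}{2})$), it is exactly (b) when $\|x - y\|_{\infty} = 2$, and for $\|x - y\|_{\infty} \geq 3$ one inserts suitable intermediate lattice points between $x$ and $y$, applies the induction hypothesis to the resulting closer pairs (together with convexity of $\tilde f$ within cubes), and recombines.

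I expect the main obstacle to be precisely this last reduction: choosing the intermediate lattice points and organizing the pairing between $N(p)$ and $N(q)$ so that every pair that arises has $\ell_{\infty}$-distance at most $2$, every midpoint that appears lies in a region where $\tilde f$ is already known to be convex, and all coefficients are nonnegative with the correct total. Carrying out this combinatorial bookkeeping, together with the routine but necessary handling of points $x$ with some integer coordinates (where $N(x)$ is lower-dimensional, a point at which the literal formula \eqref{fnconvclosureloc2} must be read with care) and of an unbounded effective domain, is where the real work lies; the rest is routine.
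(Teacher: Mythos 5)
You should first note that the paper itself gives no proof of this theorem: it is recalled in the appendix as a known result and attributed to \cite{FT90,MMTT19proxIC}, so there is no in-paper argument to compare your route against. Your proof of (a) $\Rightarrow$ (b) is complete and correct: $N(z)=\{z\}$ for $z\in\ZZ\sp{n}$ gives $\tilde f(z)=f(z)$, and midpoint convexity of the convex function $\tilde f$ yields \eqref{intcnvconddist2}.

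For (b) $\Rightarrow$ (a), what you have written is a plan rather than a proof, and you say so yourself; the missing part is not mere bookkeeping but the mathematical core of the theorem. Two concrete problems. (i) In the cross-facet step you pair the points of $N(p)$ and $N(q)$ and want to pass from $\sum_{u,v}\lambda_{uv}\,\tfrac{u+v}{2}=\tfrac{p+q}{2}$ to $\tilde f\bigl(\tfrac{p+q}{2}\bigr)\le\sum_{u,v}\lambda_{uv}\,\tilde f\bigl(\tfrac{u+v}{2}\bigr)$. But if $p$ lies in $[z,z+\vecone]$ and $q$ in $[z+\unitvec{j},z+\unitvec{j}+\vecone]$, the midpoints $\tfrac{u+v}{2}$ have $j$th coordinates ranging over $\{z_{j}+\tfrac12,\,z_{j}+1,\,z_{j}+\tfrac32\}$, i.e., they straddle the very facet $\{x_{j}=z_{j}+1\}$ in question; the inequality you need is therefore an instance of convexity of $\tilde f$ across that facet, which is exactly what is being proved. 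Note also that the definition \eqref{fnconvclosureloc2} only permits convex combinations supported on $N\bigl(\tfrac{p+q}{2}\bigr)$, whereas your combination is supported on $N(p)\cup N(q)$, so you cannot appeal to the definition directly either. (ii) The same circularity recurs in your induction on $\|x-y\|_{\infty}$: for $\|x-y\|_{\infty}\ge 3$ the intermediate midpoints you wish to ``recombine'' do not lie in a single unit cell, so ``convexity of $\tilde f$ within cubes'' does not license the recombination. The cited proofs spend essentially all of their effort on precisely this reduction (choosing the intermediate integer points --- which is also where integral convexity of $\dom f$ is needed to ensure they carry finite values --- and organizing the estimate so that every convexity step is confined to a region where convexity is already available). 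Until that is exhibited explicitly, the implication (b) $\Rightarrow$ (a) remains unproved.
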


The reader is referred to 
\cite{FT90,MM19projcnvl,MMTT19proxIC,MT20subgrIC,MT21ICfenc,MT22ICsurv},
\cite[Section~3.4]{Mdcasiam}, and 
\cite[Section~13]{Mdcaeco16}
for more about integral convexity,

\subsection{L-convexity}
\label{SCappLconv}

L- and \Lnat-convex functions form  major classes of discrete convex functions
\cite[Chapter~7]{Mdcasiam}.
The concept of \Lnat-convex functions
was introduced in \cite{FM00}
as an equivalent variant of 
{\rm L}-convex functions introduced earlier in \cite{Mdca98}.

\subsubsection{\Lnat-convex functions}

A nonempty set $S \subseteq  \ZZ\sp{n}$ is called {\em \Lnat-convex} if
\begin{equation} \label{midptcnvset}
 x, y \in S
\ \Longrightarrow \
\left\lceil \frac{x+y}{2} \right\rceil ,
\left\lfloor \frac{x+y}{2} \right\rfloor  \in S ,
\end{equation}
where, for $t \in \RR$ in general, 
$\left\lceil  t   \right\rceil$ 
denotes the smallest integer not smaller than $t$
(rounding-up to the nearest integer)
and $\left\lfloor  t  \right\rfloor$
the largest integer not larger than $t$
(rounding-down to the nearest integer),
and this operation is extended to a vector
by componentwise applications.
The property \eqref{midptcnvset} is called {\em discrete midpoint convexity}
(in the original sense of the word).

A function $f : \ZZ\sp{n} \to \RR \cup \{ +\infty \}$ with $\dom f \not= \emptyset$
is said to be {\em L$\sp{\natural}$-convex}
if it satisfies a quantitative version of discrete midpoint convexity,
i.e., if 
\begin{equation} \label{midptcnvfn}
 f(x) + f(y) \geq
   f \left(\left\lceil \frac{x+y}{2} \right\rceil\right) 
  + f \left(\left\lfloor \frac{x+y}{2} \right\rfloor\right) 
\end{equation}
holds for all $x, y \in \ZZ\sp{n}$.
The effective domain of an \Lnat-convex function is an \Lnat-convex set.
A set $S$ is \Lnat-convex if and only if its indicator function
$\delta_{S}$ is an \Lnat-convex function.

It is known \cite[Section~7.1]{Mdcasiam} that
${\rm L}^{\natural}$-convex functions can be characterized 
by several different conditions,
stated in Theorem~\ref{THlnatfncond} below.
The condition (b) in Theorem~\ref{THlnatfncond}
imposes discrete midpoint convexity \eqref{midptcnvfn} 
for all points $x,y$ at $\ell_\infty$-distance 1 or 2.
The condition (c) refers to 
{\em submodularity}, which means that
\begin{equation} \label{submfn}
f(x) + f(y) \geq f(x \vee y) + f(x \wedge y)
\end{equation}
holds for all $x, y \in \ZZ\sp{n}$, 
where
$x \vee y$ and $x \wedge y$ denote,
respectively, the vectors of componentwise maximum and minimum of $x$ and $y$;
see \eqref{veewedgedef}.
The condition (d) refers to 
a generalization of submodularity called
{\em translation-submodularity}, which means that
\begin{equation} \label{trsubmfn}
  f(x) + f(y) \geq f((x - \mu {\bf 1}) \vee y)
                 + f(x \wedge (y + \mu {\bf 1}))
\end{equation}
holds for all $x, y \in \ZZ^{n}$ and nonnegative integers $\mu$,
where $\bm{1}=(1,1,\ldots, 1)$.
The condition (e) refers to the condition%
\footnote{
This condition \eqref{lnatfnAPR} is labeled as  (L$\sp{\natural}$-APR[$\ZZ$]) 
   in \cite[Section~7.2]{Mdcasiam}.
} 
 that, for any $x, y \in \ZZ\sp{n}$ with $\suppp(x-y)\not= \emptyset$, 
the inequality
\begin{equation} \label{lnatfnAPR}
 f(x) + f(y) \geq f(x - \unitvec{A}) + f(y + \unitvec{A}) 
\end{equation}
holds with $\displaystyle A = \argmax_{i} \{ x_{i} - y_{i} \}$,
where 
$\suppp(x-y) = \{ i \mid x_{i} >  y_{i} \}$ and
$\unitvec{A}$ denotes the characteristic vector of $A$.
The condition (f) refers to submodularity of
the function 
\begin{equation}\label{lfnlnatfnrelation}
 \tilde f(x_{0},x) = f(x - x_{0} \vecone)
 \qquad ( x_{0} \in \ZZ, x \in \ZZ\sp{n})
\end{equation}
in $n+1$ variables associated with the given function $f$.

\begin{theorem}[{\cite[Theorem~2.2]{Msurvop21}}] \label{THlnatfncond}
For $f: \ZZ^{n} \to \RR \cup \{ +\infty \}$,
the following conditions, {\rm (a)} to {\rm (f)}, are equivalent:


{\rm (a)}
$f$ is an \Lnat-convex function, that is,
it satisfies 
discrete midpoint inequality
\eqref{midptcnvfn} for all $ x, y \in \ZZ^{n}$.

{\rm (b)}
$\dom f$ is an \Lnat-convex set,
and $f$ satisfies
discrete midpoint inequality
\eqref{midptcnvfn} for all $ x, y \in \ZZ^{n}$
with $\| x-y \|_{\infty} \leq 2$.

{\rm (c)}
$f$ is integrally convex and submodular \eqref{submfn}.

{\rm (d)}
$f$ satisfies translation-submodularity \eqref{trsubmfn}
for all nonnegative $\mu \in \ZZ$.

{\rm (e)}
$f$ satisfies the condition \eqref{lnatfnAPR}.

{\rm (f)}
$\tilde f$ in \eqref{lfnlnatfnrelation} is submodular \eqref{submfn}.
\finbox
\end{theorem}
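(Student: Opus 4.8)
\quad
The plan is to run the cycle $\mathrm{(a)}\Rightarrow\mathrm{(d)}\Rightarrow\mathrm{(f)}\Rightarrow\mathrm{(c)}\Rightarrow\mathrm{(b)}\Rightarrow\mathrm{(a)}$, with $\mathrm{(e)}$ attached through the separate equivalence $\mathrm{(a)}\Leftrightarrow\mathrm{(e)}$. Two of the links are essentially mechanical. For $\mathrm{(d)}\Leftrightarrow\mathrm{(f)}$ one substitutes \eqref{lfnlnatfnrelation}: writing submodularity \eqref{submfn} of $\tilde f$ at points $(s,x)$ and $(t,y)$ with $s\ge t$ and setting $\mu=s-t$ reproduces translation-submodularity \eqref{trsubmfn} verbatim, and the computation is reversible. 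For $\mathrm{(f)}\Rightarrow\mathrm{(c)}$, observe that $\tilde f$ is submodular and invariant under the translation $(1,\vecone)$, hence is an L-convex function and in particular integrally convex; since $f(\cdot)=\tilde f(0,\cdot)$, restricting to $x_{0}=0$ in \eqref{submfn} shows $f$ submodular, and fixing a coordinate preserves integral convexity \cite[Section~3.4]{Mdcasiam}, so $f$ is integrally convex. The implications $\mathrm{(a)}\Rightarrow\mathrm{(d)}$ and $\mathrm{(a)}\Leftrightarrow\mathrm{(e)}$ are the standard arguments relating discrete midpoint convexity \eqref{midptcnvfn} to translation-submodularity and to the exchange inequality \eqref{lnatfnAPR} (cf.\ \cite[Sections~7.1--7.2]{Mdcasiam}).

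The link $\mathrm{(c)}\Rightarrow\mathrm{(b)}$ carries content but is still elementary. From submodularity, $\dom f$ is closed under $\vee$ and $\wedge$, and being also integrally convex it is an \Lnat-convex set, so the first half of (b) holds. For \eqref{midptcnvfn} with $\|x-y\|_{\infty}\le 2$: if $\|x-y\|_{\infty}\le 1$ then $\lceil(x+y)/2\rceil=x\vee y$ and $\lfloor(x+y)/2\rfloor=x\wedge y$, so \eqref{midptcnvfn} is exactly \eqref{submfn}; if $\|x-y\|_{\infty}=2$, put $b=\lfloor(x+y)/2\rfloor$ and $t=\lceil(x+y)/2\rceil$, so that $t+b=x+y$, $t-b\in\{0,1\}^{n}$, and $N((x+y)/2)=[b,t]_{\ZZ}$. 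If $(\lambda_{z})_{z\in[b,t]_{\ZZ}}$ is an optimal weight system for $\tilde f((x+y)/2)$, then the reflected system $z\mapsto\lambda_{t+b-z}$ is also feasible, whence by optimality $\sum_{z}\lambda_{z}f(z)=\frac{1}{2}\sum_{z}\lambda_{z}\bigl(f(z)+f(t+b-z)\bigr)\ge\frac{1}{2}\bigl(f(t)+f(b)\bigr)$, the inequality being \eqref{submfn} applied to the pair $z,\ t+b-z$ (whose join and meet are $t$ and $b$). Combined with $\tilde f((x+y)/2)\le\frac{1}{2}(f(x)+f(y))$, which holds by Theorem~\ref{THfavtarProp33} since $f$ is integrally convex, this yields \eqref{midptcnvfn}.

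This leaves $\mathrm{(b)}\Rightarrow\mathrm{(a)}$, which I expect to be the main obstacle: one must show that \eqref{midptcnvfn} for every pair at $\ell_{\infty}$-distance at most $2$, together with $\dom f$ being \Lnat-convex, forces \eqref{midptcnvfn} for all pairs---i.e.\ that \Lnat-convexity is genuinely a two-point local condition. The natural approach is induction on $\|x-y\|_{\infty}$: given a pair with $\|x-y\|_{\infty}\ge 3$, one introduces an intermediate lattice point near $(x+y)/2$ and applies \eqref{midptcnvfn} to the two resulting closer pairs, but the intermediate point must be chosen so that the rounded midpoints of the sub-pairs recombine to $\lceil(x+y)/2\rceil$ and $\lfloor(x+y)/2\rfloor$; managing this bookkeeping is the delicate step. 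If preferred, this step---and indeed the entire chain of equivalences---may be quoted from \cite[Section~7.1]{Mdcasiam} or \cite[Theorem~2.2]{Msurvop21}.
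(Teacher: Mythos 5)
The paper does not prove this theorem at all: it is quoted from \cite[Theorem~2.2]{Msurvop21} (ultimately \cite[Section~7.1]{Mdcasiam}) and stated without proof, so your closing suggestion that the whole chain ``may be quoted'' is in fact exactly what the paper does. Judged as a proof attempt, your sketch has two genuine gaps.

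First, in (c)$\Rightarrow$(b) the step ``by optimality $\sum_{z}\lambda_{z}f(z)=\frac{1}{2}\sum_{z}\lambda_{z}\bigl(f(z)+f(t+b-z)\bigr)$'' goes the wrong way. Optimality of $\lambda$ together with feasibility of the reflected system gives $\sum_{z}\lambda_{z}f(t+b-z)\geq\sum_{z}\lambda_{z}f(z)$, hence only $\sum_{z}\lambda_{z}f(z)\leq\frac{1}{2}\sum_{z}\lambda_{z}\bigl(f(z)+f(t+b-z)\bigr)$; combined with the pairwise submodularity bound $\frac{1}{2}\sum_{z}\lambda_{z}\bigl(f(z)+f(t+b-z)\bigr)\geq\frac{1}{2}\bigl(f(t)+f(b)\bigr)$ this is vacuous, since it bounds the minimum from above by a quantity that is bounded from below. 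The inequality you actually need, $\tilde f\bigl((x+y)/2\bigr)\geq\frac{1}{2}\bigl(f(t)+f(b)\bigr)$, is true, but it is precisely the statement that the convex envelope of a submodular function on the unit cube $[b,t]_{\ZZ}$ takes the value $\frac{1}{2}\bigl(f(b)+f(t)\bigr)$ at the center, i.e., the Lov\'asz-extension theorem. A self-contained substitute is an uncrossing argument: replace any two incomparable points $z,w$ in the support of $\lambda$ by $z\vee w$ and $z\wedge w$, which preserves the barycenter and, by \eqref{submfn}, does not increase $\sum_{z}\lambda_{z}f(z)$; once the support is a chain, the barycenter condition forces it to be $\{b,t\}$ with equal weights.

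Second, the implication (b)$\Rightarrow$(a) --- which you rightly identify as carrying the real content --- is not proved; the induction on $\|x-y\|_{\infty}$ is only named, and the ``delicate bookkeeping'' you defer is exactly where the standard proofs do their work (typically by first passing to condition (e) or (f) rather than iterating \eqref{midptcnvfn} directly). The links (a)$\Rightarrow$(d) and (a)$\Leftrightarrow$(e) are likewise only asserted as standard. So as written the proposal is an outline containing one incorrect step rather than a proof; since the paper itself merely cites the result, the consistent options are to cite it as the paper does, or to supply the missing arguments along the lines above.
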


For a set $S \subseteq  \ZZ\sp{n}$ we consider conditions
\begin{align} 
 x, y \in S
& \ \Longrightarrow \
 x \vee y, \ x \wedge y \in S ,
\label{submset}
\\
 x, y \in S
& \ \Longrightarrow \
 (x - \mu {\bf 1}) \vee y, \  x \wedge (y + \mu {\bf 1}) \in S ,
\label{trsubmset}
\\
x,y \in S, \  \suppp(x-y)\not= \emptyset 
& \ \Longrightarrow \
x - \unitvec{A}, \  y + \unitvec{A} \  \in S
\mbox{ for } A = \argmax_{i} \{ x_{i} - y_{i} \} .
\label{lnatsetAPR}
\end{align}
The first condition \eqref{submset}, meaning that $S$ is a sublattice of $\ZZ\sp{n}$,
corresponds to submodularity \eqref{submfn},
whereas
\eqref{trsubmset} and \eqref{lnatsetAPR} correspond to 
\eqref{trsubmfn} and \eqref{lnatfnAPR}, respectively.

\begin{theorem}[{\cite[Proposition~2.3]{Msurvop21}}]  \label{THlnatset}
For a nonempty set $S \subseteq  \ZZ\sp{n}$, 
the following conditions, {\rm (a)} to {\rm (d)}, are equivalent:


{\rm (a)}
$S$ is an \Lnat-convex set, that is,
it satisfies \eqref{midptcnvset}.

{\rm (b)}
$S$ is an integrally convex set that
satisfies \eqref{submset}.

{\rm (c)}
$S$ satisfies \eqref{trsubmset} for all nonnegative $\mu \in \ZZ$.

{\rm (d)}
$S$ satisfies \eqref{lnatsetAPR}.
\finbox
\end{theorem}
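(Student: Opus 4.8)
The plan is to obtain Theorem~\ref{THlnatset} as a corollary of its functional counterpart, Theorem~\ref{THlnatfncond}, by specializing the latter to the indicator function $\delta_{S}$ of the given nonempty set $S$. Since $S \ne \emptyset$, the function $f = \delta_{S}$ has nonempty effective domain (in fact $\dom \delta_{S} = S$), so Theorem~\ref{THlnatfncond} applies to it. The crux is the observation that each of the four conditions (a)--(d) for the set $S$ is precisely the translation of one of the conditions of Theorem~\ref{THlnatfncond} for the function $\delta_{S}$: condition (a) here is (a) there, (b) here is (c) there, (c) here is (d) there, and (d) here is (e) there.

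First I would set up the dictionary between set conditions and function conditions for indicator functions. A generic inequality $\delta_{S}(x) + \delta_{S}(y) \geq \delta_{S}(u) + \delta_{S}(v)$ is automatically satisfied when $x \notin S$ or $y \notin S$, because the left-hand side is then $+\infty$, and when $x, y \in S$ it is equivalent to the assertion that both $u \in S$ and $v \in S$. Specializing $(u,v)$ in turn to $(\lceil (x+y)/2 \rceil, \lfloor (x+y)/2 \rfloor)$, to $(x \vee y, x \wedge y)$, to $((x - \mu\vecone)\vee y, \, x \wedge (y + \mu\vecone))$, and to $(x - \unitvec{A}, \, y + \unitvec{A})$ with $A = \argmax_{i}\{x_{i} - y_{i}\}$, one sees that discrete midpoint inequality \eqref{midptcnvfn} for $\delta_{S}$ is \eqref{midptcnvset}, submodularity \eqref{submfn} of $\delta_{S}$ is the sublattice condition \eqref{submset}, translation-submodularity \eqref{trsubmfn} of $\delta_{S}$ is \eqref{trsubmset}, and condition \eqref{lnatfnAPR} for $\delta_{S}$ is \eqref{lnatsetAPR}. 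Together with the fact recorded in Appendix~\ref{SCdiscfndef} that $\delta_{S}$ is integrally convex if and only if $S$ is, this shows that condition (c) of Theorem~\ref{THlnatfncond} applied to $\delta_{S}$ (namely, ``$\delta_{S}$ is integrally convex and submodular'') is exactly condition (b) of the present theorem.

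Given this dictionary, the equivalences (a) $\Leftrightarrow$ (b) $\Leftrightarrow$ (c) $\Leftrightarrow$ (d) are immediate from the equivalence of conditions (a), (c), (d), (e) of Theorem~\ref{THlnatfncond} for $f = \delta_{S}$. I do not expect a genuine obstacle here; the only points demanding care are the routine bookkeeping of the $+\infty$ cases in the translations above and the explicit appeal to the indicator-function characterization of integrally convex sets. If instead a self-contained proof were desired, one would prove the cycle (a) $\Rightarrow$ (d) $\Rightarrow$ (c) $\Rightarrow$ (b) $\Rightarrow$ (a) directly; there the implication (b) $\Rightarrow$ (a) is the substantive one, as it must use integral convexity (via the local characterization in Theorem~\ref{THfavtarProp33}) to lift the sublattice property to discrete midpoint convexity at all pairs of points, whereas the other three implications are straightforward manipulations of the defining operations.
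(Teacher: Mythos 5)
Your reduction is correct, but note that the paper itself gives no proof of Theorem~\ref{THlnatset}: it is quoted from the literature (\cite[Proposition~2.3]{Msurvop21}), just as Theorem~\ref{THlnatfncond} is quoted from \cite[Theorem~2.2]{Msurvop21}. Specializing the function version to $f=\delta_{S}$ is the standard way such set statements are obtained, and your dictionary is accurate: the $+\infty$ bookkeeping turns \eqref{midptcnvfn}, \eqref{submfn}, \eqref{trsubmfn}, and \eqref{lnatfnAPR} for $\delta_{S}$ into \eqref{midptcnvset}, \eqref{submset}, \eqref{trsubmset}, and \eqref{lnatsetAPR} for $S$, and the appendix explicitly records that $\delta_{S}$ is integrally convex if and only if $S$ is, so set-condition (b) is exactly function-condition (c). The only caveat is that your argument is no more self-contained than Theorem~\ref{THlnatfncond} itself, which this paper also leaves unproved; your closing remark correctly identifies (b) $\Rightarrow$ (a) as the implication that would carry the real content in a from-scratch proof.
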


The following polyhedral description of an \Lnat-convex set is known
\cite[Section~5.5]{Mdcasiam}.

\begin{theorem} \label{THlnatpolydesc}
A set $S \subseteq \ZZ\sp{n}$ is \Lnat-convex if and only if 
$S = \overline{S} \cap \ZZ\sp{n}$ and
its convex hull $\overline{S}$ can be represented as
\begin{equation}  \label{SnatfromGamma}
\overline{S} = \{ x \in \RR\sp{n} \mid 
\alpha_{i} \leq  x_{i} \leq \beta_{i} \ \ (i\in N),  \ 
x_{j} - x_{i} \leq \gamma_{ij} \ \ (i,j \in N) \}
\end{equation}
for some $\alpha_{i} \in \ZZ \cup \{ -\infty \}$,
$\beta_{i} \in \ZZ \cup \{ +\infty \}$,
and $\gamma_{ij} \in \ZZ \cup \{ +\infty \}$
$(i,j \in N )$
with $\gamma_{ii} =0$ $(i \in N )$
such that 
$\tilde \gamma_{ij}$ defined
for $i,j \in N \cup \{ 0 \}$
by 
\begin{equation}  \label{tildeGamma}
\tilde \gamma_{00} =0, \qquad
\tilde \gamma_{ij} = \gamma_{ij}, \quad
\tilde \gamma_{i0} = -\alpha_{i}, \quad
\tilde \gamma_{0j} = \beta_{j}  \qquad
(i,j \in N)
\end{equation}
satisfies the triangle inequality:
\begin{equation}  \label{gammatriangleLnat}
\tilde \gamma_{ij} + \tilde \gamma_{jk} \geq \tilde \gamma_{ik}
\qquad (i,j,k \in N \cup \{ 0 \}).
\end{equation}
Such $\alpha_{i}$, $\beta_{i}$, $\gamma_{ij}$ are determined from $S$ by
\begin{align} 
&\alpha_{i} = \min\{ x_{i} \mid x \in S \}, \quad
\beta_{i} =  \max\{ x_{i} \mid x \in S \}
\qquad (i \in N),
 \label{alphabetaFromSnat}
\\ &
\gamma_{ij} =  \max\{ x_{j} - x_{i} \mid x \in S \} 
\qquad (i,j \in N).
 \label{gammaFromSnat}
\end{align}
\vspace{-1.3\baselineskip}\\
\finbox
\end{theorem}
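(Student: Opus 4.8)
The statement is an equivalence packaged with the explicit formulas \eqref{alphabetaFromSnat}--\eqref{gammaFromSnat} and the triangle inequality \eqref{gammatriangleLnat}, so the plan is to prove the two implications separately. In the ``only if'' direction the containment of $S$ in the polyhedral set \eqref{SnatfromGamma} and the triangle inequality will be essentially immediate from the max/min formulas; the real content is the reverse containment, that every integer point satisfying the listed inequalities actually lies in $S$.

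For the ``if'' direction I would first observe that $S$ is nonempty: the coefficient matrix of the inequality system in \eqref{SnatfromGamma} has rows of the form $\unitvec{j}-\unitvec{i}$, $\unitvec{j}$, $-\unitvec{i}$, a submatrix of the incidence matrix of the complete digraph on $N\cup\{0\}$ and hence totally unimodular, while the triangle inequality for $\tilde\gamma$ (in particular $\tilde\gamma_{ij}+\tilde\gamma_{ji}\geq\tilde\gamma_{ii}=0$, so that $\tilde\gamma$, viewed as an arc-length function, has no negative cycle) makes the real system feasible; hence the underlying polyhedron is a nonempty integral polyhedron whose integer points are exactly those of $S$. Then I would check translation-submodularity \eqref{trsubmset}: for $x,y\in S$ and an integer $\mu\geq 0$, set $u=(x-\mu\vecone)\vee y$ and $v=x\wedge(y+\mu\vecone)$; the box constraints hold because $y_{i}\leq u_{i}\leq\max(x_{i},y_{i})$ and $\min(x_{i},y_{i})\leq v_{i}\leq x_{i}$, and the difference constraints follow from the elementary inequalities $\max(a,b)-\max(c,d)\leq\max(a-c,\,b-d)$ and $\min(a,b)-\min(c,d)\leq\max(a-c,\,b-d)$, which yield $u_{j}-u_{i}\leq\max(x_{j}-x_{i},\,y_{j}-y_{i})\leq\gamma_{ij}$ and the same bound for $v$. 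Thus $u,v\in S$, and Theorem~\ref{THlnatset} ((c)$\Rightarrow$(a)) gives that $S$ is \Lnat-convex.

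For the ``only if'' direction the cleanest route I see reduces to the homogeneous (L-convex) case. Given \Lnat-convex $S$, form $\widehat S=\{(x_{0},x)\in\ZZ^{n+1}\mid x-x_{0}\vecone\in S\}$, which is invariant under the translation $(x_{0},x)\mapsto(x_{0}+1,\,x+\vecone)$ and, by Theorem~\ref{THlnatfncond} ((a)$\Leftrightarrow$(f)) applied to the indicator function $\delta_{S}$, a sublattice of $\ZZ^{n+1}$; being a translation-invariant sublattice, $\widehat S$ is an L-convex set (cf.\ Theorem~\ref{THlset}). The classical polyhedral description of L-convex sets (\cite[Section~5.5]{Mdcasiam}) writes $\widehat S=\{y\in\ZZ^{n+1}\mid y_{j}-y_{i}\leq\hat\gamma_{ij}\ (i,j\in\{0,1,\ldots,n\})\}$ with $\hat\gamma_{ij}=\max\{y_{j}-y_{i}\mid y\in\widehat S\}$ satisfying the triangle inequality; intersecting with the hyperplane $\{x_{0}=0\}$ and using $\{x\mid(0,x)\in\widehat S\}=S$ yields \eqref{SnatfromGamma} with $\gamma_{ij}=\hat\gamma_{ij}$, $\beta_{j}=\hat\gamma_{0j}$, $\alpha_{i}=-\hat\gamma_{i0}$, whence $\tilde\gamma$ of \eqref{tildeGamma} coincides with $\hat\gamma$ (so \eqref{gammatriangleLnat} holds) and substituting $y_{0}=0$ into $\hat\gamma_{ij}=\max\{y_{j}-y_{i}\}$ reproduces \eqref{alphabetaFromSnat}--\eqref{gammaFromSnat} and their uniqueness. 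Alternatively, staying in $\ZZ^{n}$, one defines $\alpha_{i},\beta_{i},\gamma_{ij}$ directly by those formulas, obtains \eqref{gammatriangleLnat} by telescoping (e.g.\ $\gamma_{ik}\leq\gamma_{ij}+\gamma_{jk}$ from $x_{k}-x_{i}=(x_{j}-x_{i})+(x_{k}-x_{j})$, the instances with the index $0$ being read off with ``coordinate $0$'' interpreted as the constant $0$), notes that $S$ is trivially contained in the set \eqref{SnatfromGamma}, and proves the reverse containment by an exchange argument --- take $z\in S$ minimizing $\|z-x\|_{\infty}$, assume $z\neq x$, and combine $z$ with members of $S$ that attain the bounds $\gamma_{ij},\alpha_{i},\beta_{i}$ active near $x$, using \eqref{submset} and \eqref{trsubmset} (equivalently \eqref{lnatsetAPR}), to produce a point of $S$ strictly closer to $x$, a contradiction.

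The main obstacle, in either route, is exactly this reverse containment: in the lifting argument it is borne by the hard direction of the polyhedral description of L-convex sets (whose easy direction mimics my ``if'' argument above), and in the direct argument it is the exchange step itself. The delicate points I expect to have to handle are (i) manufacturing, from the finitely many inequalities active at $x$, a single element of $S$ that dominates $x$ in the required coordinates, by repeatedly taking componentwise maxima and minima and applying translation-submodularity, and (ii) the unbounded case, where some $\alpha_{i},\beta_{i},\gamma_{ij}$ are $\pm\infty$: there I would first discard the inequalities that are inactive in a neighbourhood of $x$, reducing to a bounded polyhedron before running the exchange argument.
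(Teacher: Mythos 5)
The paper states Theorem~\ref{THlnatpolydesc} as a known result imported from \cite[Section~5.5]{Mdcasiam} and gives no proof of it, so there is no in-paper argument to compare against; I judge the proposal on its own terms. Your ``if'' direction is complete and correct: checking translation-submodularity \eqref{trsubmset} for $u=(x-\mu\vecone)\vee y$ and $v=x\wedge(y+\mu\vecone)$ via the inequalities $\max(a,b)-\max(c,d)\le\max(a-c,b-d)$ and $\min(a,b)-\min(c,d)\le\max(a-c,b-d)$, and then invoking Theorem~\ref{THlnatset} (c)$\Rightarrow$(a), works; and you correctly isolate the role of the triangle inequality as guaranteeing nonemptiness, consistent with Remark~\ref{RMpolydescL}. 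The derivation of \eqref{gammatriangleLnat} from the max/min formulas and the inclusion of $S$ in the right-hand side of \eqref{SnatfromGamma} are likewise fine.

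The genuine gap is the reverse inclusion in the ``only if'' direction, and neither of your routes closes it. The lifting to $\widehat S=\{(x_0,x)\mid x-x_0\vecone\in S\}$ is correctly set up (Theorem~\ref{THlnatfncond}(f) gives the sublattice property, translation invariance is clear, and the bookkeeping $\hat\gamma_{ij}=\gamma_{ij}$, $\hat\gamma_{0j}=\beta_j$, $\hat\gamma_{i0}=-\alpha_i$ is right), but the ``classical polyhedral description of L-convex sets'' you then invoke is exactly the statement to be proved, in homogeneous form one dimension up; citing it is defensible since it lives in the same source the paper itself cites, but it means your proposal contains no actual proof of the hard inclusion. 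Your direct alternative names the right tools (\eqref{submset}, \eqref{trsubmset}, \eqref{lnatsetAPR}, attainment of the bounds) but stops at ``produce a point of $S$ strictly closer to $x$'' without exhibiting the second point of $S$ to which \eqref{lnatsetAPR} would be applied; constructing that comparison point --- merging, by lattice operations and translations, the finitely many elements of $S$ attaining the $\gamma$-, $\alpha$-, $\beta$-bounds so that the set $A=\argmax_i\{z_i-x_i\}$ becomes a strict argmax against a single member of $S$ --- is precisely the content of the theorem, and it, together with the unbounded case you flag, is the step that must actually be written out. In short: a correct skeleton and a fully correct easy direction, but the essential inclusion is either delegated to the theorem's homogeneous twin or left as a sketch.
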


\begin{remark} \rm \label{RMpolydescL}
Here are additional remarks about the polyhedral descriptions in Theorem~\ref{THlnatpolydesc}.
\begin{itemize}
\item
The correspondence between $S$ and integer-valued $(\alpha, \beta, \gamma)$ 
with \eqref{gammatriangleLnat}
is bijective (one-to-one and onto)
through \eqref{SnatfromGamma}, \eqref{alphabetaFromSnat}, and \eqref{gammaFromSnat}.

\item
For any integer-valued $(\alpha, \beta, \gamma)$ 
(independent of the triangle inequality),
$S$ in \eqref{SnatfromGamma}
is an \Lnat-convex set if $S \ne \emptyset$.
We have $S \ne \emptyset$ if and only if there exists no negative cycle 
with respect to $\tilde \gamma$,
where a negative cycle means a set of indices $i_{1}, i_{2}, \ldots, i_{m}$ 
such that
$\tilde \gamma_{i_{1}i_{2}} + \tilde \gamma_{i_{2}i_{3}}+ \cdots
 + \tilde \gamma_{i_{m-1}i_{m}} + \tilde \gamma_{i_{m}i_{1}} < 0$.
\finbox
\end{itemize}
\end{remark}

\subsubsection{L-convex functions}

A function 
$f(x_{1},x_{2}, \ldots, x_{n} )$
with $\dom f \not= \emptyset$ is called {\em L-convex}
if it is submodular \eqref{submfn}
 and there exists
$r \in \RR$ such that 
\begin{equation}\label{shiftlfnZ}
f(x + \mu \vecone) = f(x) + \mu  r
\end{equation}
for all $x  \in \ZZ\sp{n}$ and $\mu \in \ZZ$.
If $f$ is {\rm L}-convex,
the function
$g(x_{2}, \ldots, x_{n} ) := f(0, x_{2}, \ldots, x_{n} )$
is an \Lnat-convex function, and any \Lnat-convex function 
arises in this way.
The function $\tilde f$ in \eqref{lfnlnatfnrelation} derived from an \Lnat-convex function $f$  
is an {\rm L}-convex function
with $\tilde f(x_{0}+ \mu , x + \mu  \vecone) = \tilde f(x_{0},x)$,
and we have $f(x)  =  \tilde f(0,x)$.

\begin{theorem}[{\cite[Theorem~2.4]{Msurvop21}}] \label{THlfncond}
For $f: \ZZ^{n} \to \RR \cup \{ +\infty \}$,
the following conditions, {\rm (a)} to {\rm (c)}, are equivalent:


{\rm (a)}
$f$ is an L-convex function, that is, it satisfies \eqref{submfn} 
and \eqref{shiftlfnZ} for some $r \in \RR$.

{\rm (b)}
$f$ is an \Lnat-convex function that satisfies \eqref{shiftlfnZ} for some $r \in \RR$.

{\rm (c)}
$f$ satisfies translation-submodularity \eqref{trsubmfn}
for all $\mu \in \ZZ$ 
(including $\mu < 0$).
\finbox
\end{theorem}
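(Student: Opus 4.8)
The plan is to prove the implications $(\mathrm{a})\Leftrightarrow(\mathrm{b})$, $(\mathrm{a})\Rightarrow(\mathrm{c})$, and $(\mathrm{c})\Rightarrow(\mathrm{a})$, leaning on Theorem~\ref{THlnatfncond} for the \Lnat-convexity bookkeeping and reserving the real work for the step $(\mathrm{c})\Rightarrow(\mathrm{a})$, where the linear-shift relation \eqref{shiftlfnZ} must be extracted from translation-submodularity alone.

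For $(\mathrm{a})\Rightarrow(\mathrm{c})$ the computation is short. Fix $x,y\in\ZZ\sp{n}$ and $\mu\in\ZZ$ of either sign, and set $x'=x-\mu\vecone$. Then $(x-\mu\vecone)\vee y=x'\vee y$ and $x\wedge(y+\mu\vecone)=(x'\wedge y)+\mu\vecone$, so by \eqref{shiftlfnZ} we have $f(x)=f(x')+\mu r$ and $f(x\wedge(y+\mu\vecone))=f(x'\wedge y)+\mu r$. Substituting into \eqref{trsubmfn}, the inequality for $(x,y,\mu)$ reduces to ordinary submodularity \eqref{submfn} for the pair $(x',y)$, which holds because $f$ is submodular; hence $f$ satisfies \eqref{trsubmfn} for every $\mu\in\ZZ$, which is $(\mathrm{c})$. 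Reading the same computation only for $\mu\ge0$, together with Theorem~\ref{THlnatfncond}, gives $(\mathrm{a})\Rightarrow(\mathrm{b})$: submodularity and \eqref{shiftlfnZ} yield translation-submodularity for all nonnegative $\mu$, i.e.\ condition (d) of Theorem~\ref{THlnatfncond}, so $f$ is \Lnat-convex, and \eqref{shiftlfnZ} is assumed. The converse $(\mathrm{b})\Rightarrow(\mathrm{a})$ is immediate, since an \Lnat-convex function is submodular by Theorem~\ref{THlnatfncond} and \eqref{shiftlfnZ} is again part of the hypothesis.

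It remains to prove $(\mathrm{c})\Rightarrow(\mathrm{a})$. Taking $\mu=0$ in \eqref{trsubmfn} gives submodularity \eqref{submfn}, and taking $\mu\ge0$ gives, via Theorem~\ref{THlnatfncond}, that $f$ is \Lnat-convex; I must still produce $r$ satisfying \eqref{shiftlfnZ}. Applying \eqref{trsubmfn} with $y=x$ and $\mu=-\nu<0$ gives $2f(x)\ge f(x+\nu\vecone)+f(x-\nu\vecone)$ for every $\nu\ge1$, while discrete midpoint convexity of the \Lnat-convex function $f$ gives the reverse inequality; hence $f$ is affine along each line $\{x+k\vecone:k\in\ZZ\}$, say $f(x+k\vecone)=f(x)+k\,r(x)$, where $r$ is constant on each such line (and $\dom f$ is invariant under adding $\vecone$, since $f(x)<+\infty$ forces $f(x\pm\vecone)<+\infty$). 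The crux is to show that $r$ does not depend on the line. Passing to the lifted function $\tilde f$ of \eqref{lfnlnatfnrelation}, which is submodular by Theorem~\ref{THlnatfncond}, one reads off from the submodularity inequalities that the slope $r$ is monotone nondecreasing in each coordinate; combining this with the invariance of $r$ under adding $\vecone$ and with the polyhedral structure of the \Lnat-convex set $\dom f$ (Theorem~\ref{THlnatpolydesc}), which lets one move within $\dom f$ from $x$ toward $x+\vecone$ by single-coordinate increments, forces each such increment of $r$ to vanish. Therefore $r$ is a global constant, and \eqref{shiftlfnZ} holds (trivially outside $\dom f$), so $f$ is L-convex. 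This last step — the global constancy of the slope, resting on the connectivity of \Lnat-convex domains — is the main obstacle; everything else is formal.
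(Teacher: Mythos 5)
The paper states this theorem without proof, quoting it from \cite[Theorem~2.4]{Msurvop21}, so your argument has to stand on its own. Most of it does: the equivalence (a)$\Leftrightarrow$(b), the implication (a)$\Rightarrow$(c) via the substitution $x'=x-\mu\vecone$, and the first part of (c)$\Rightarrow$(a) --- submodularity from $\mu=0$, \Lnat-convexity from $\mu\ge 0$ via condition (d) of Theorem~\ref{THlnatfncond}, the identity $2f(x)=f(x+\nu\vecone)+f(x-\nu\vecone)$, hence affineness of $f$ along each line $x+\ZZ\vecone$ with slope $r(x)$ and the $\vecone$-invariance of $\dom f$ --- are all correct.

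The gap is in the final step of (c)$\Rightarrow$(a), the global constancy of $r$. Your mechanism rests on the claim that ``the polyhedral structure of the \Lnat-convex set $\dom f$ lets one move within $\dom f$ from $x$ toward $x+\vecone$ by single-coordinate increments.'' That is false: for the L-convex set $\dom f=\{t\vecone \mid t\in\ZZ\}$ with $n\ge 2$, none of the intermediate points $x+\unitvec{1},\ x+\unitvec{1}+\unitvec{2},\dots$ lies in $\dom f$, and the monotonicity inequality $r(x)\le r(x+\unitvec{i})$ you extract from submodularity of $\tilde f$ is vacuous off $\dom f$. Theorem~\ref{THlnatpolydesc} gives no unit-step connectivity of \Lnat-convex sets. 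Moreover, even where such a path exists, constancy of $r$ along paths from $x$ to $x+\vecone$ does not by itself compare the slopes of two arbitrary $\vecone$-lines of $\dom f$; that connectivity issue is left unaddressed.

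The repair is short and uses hypothesis (c) more directly, which is presumably why the full strength of (c) (all $\mu\in\ZZ$) is assumed. For $x,y\in\dom f$ take $\mu=-\nu$ in \eqref{trsubmfn} with $\nu\ge\|x-y\|_{\infty}$; then $(x+\nu\vecone)\vee y=x+\nu\vecone$ and $x\wedge(y-\nu\vecone)=y-\nu\vecone$, so
$f(x)+f(y)\ \ge\ f(x+\nu\vecone)+f(y-\nu\vecone)\ =\ f(x)+f(y)+\nu\bigl(r(x)-r(y)\bigr)$
by the line-affineness you already established, whence $r(x)\le r(y)$, and by symmetry $r(x)=r(y)$. (Alternatively, a single submodularity inequality for $\tilde f$ at the points $(-1,w)$ and $(0,u)$ with $u\le w$ in $\dom f$ gives $f(w+\vecone)+f(u)\ge f(w)+f(u+\vecone)$, i.e.\ $r(u)\le r(w)$ with no intermediate points needed; then $v\le u+\nu\vecone$ for large $\nu$ yields $r(v)\le r(u+\nu\vecone)=r(u)$ and symmetry finishes.) With either repair the proof is complete.
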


A nonempty set $S$ is called {\em L-convex} if its indicator function
$\delta_{S}$
is an L-convex function.
This means that $S$ is L-convex if and only if 
it satisfies 
\eqref{submset} and
\begin{equation} \label{invarone}
 x  \in S, \  \mu \in \ZZ  \ \Longrightarrow \   \  x +  \mu \vecone \in S .
\end{equation}
This property is sometimes called the translation invariance in the direction of $\vecone$.
The effective domain of an L-convex function is an L-convex set.

The following theorem gives equivalent conditions for a set to be L-convex.

\begin{theorem}[{\cite[Proposition~2.5]{Msurvop21}}] \label{THlset}
For a nonempty set $S \subseteq  \ZZ\sp{n}$, 
the following conditions, {\rm (a)} to {\rm (c)}, are equivalent:


{\rm (a)}
$S$ is an L-convex set, that is, it satisfies \eqref{submset} and \eqref{invarone}.

{\rm (b)}
$S$ is an \Lnat-convex set that satisfies \eqref{invarone}.

{\rm (c)}
$S$ satisfies \eqref{trsubmset} for all $\mu \in \ZZ$
(including $\mu < 0$).
\finbox
\end{theorem}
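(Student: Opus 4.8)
The plan is to establish the cycle of implications (a) $\Rightarrow$ (b) $\Rightarrow$ (c) $\Rightarrow$ (a), exactly paralleling the function-level statement of Theorem~\ref{THlfncond}. The only non-elementary ingredient will be Theorem~\ref{THlnatset}, which translates \Lnat-convexity of a set into the combinatorial conditions \eqref{submset} and \eqref{trsubmset}; everything else is routine bookkeeping with the lattice operations $\vee,\wedge$ and with the translation $x \mapsto x + \mu\vecone$.

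For (a) $\Rightarrow$ (b): assume $S$ satisfies \eqref{submset} and \eqref{invarone}. Since (b) already requires \eqref{invarone}, it remains only to show that $S$ is \Lnat-convex, and for that I would verify condition (c) of Theorem~\ref{THlnatset}, i.e.\ \eqref{trsubmset} for all nonnegative $\mu \in \ZZ$. Given $x,y \in S$ and $\mu \geq 0$, translation invariance \eqref{invarone} yields $x - \mu\vecone \in S$ and $y + \mu\vecone \in S$, and then the sublattice property \eqref{submset} gives $(x - \mu\vecone)\vee y \in S$ and $x \wedge (y + \mu\vecone) \in S$, which is \eqref{trsubmset}. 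Hence $S$ is \Lnat-convex, and together with \eqref{invarone} this is (b).

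For (b) $\Rightarrow$ (c): assume $S$ is \Lnat-convex and satisfies \eqref{invarone}. By condition (b) of Theorem~\ref{THlnatset}, an \Lnat-convex set is in particular a sublattice, i.e.\ it satisfies \eqref{submset}. Now the same two-line argument as above goes through for \emph{every} $\mu \in \ZZ$, not just $\mu \geq 0$: for $x,y \in S$ and arbitrary integer $\mu$, \eqref{invarone} gives $x - \mu\vecone \in S$ and $y + \mu\vecone \in S$, and then \eqref{submset} gives $(x - \mu\vecone)\vee y \in S$ and $x \wedge (y + \mu\vecone) \in S$. This is exactly (c). Finally, for (c) $\Rightarrow$ (a): taking $\mu = 0$ in \eqref{trsubmset} yields $x \vee y, x \wedge y \in S$ for all $x,y \in S$, which is \eqref{submset}; and taking $y = x$ together with $\mu = -\nu$ for $\nu \geq 0$ yields $(x + \nu\vecone)\vee x = x + \nu\vecone \in S$ and $x \wedge (x - \nu\vecone) = x - \nu\vecone \in S$, so, combined with the trivial case $\nu = 0$, we obtain $x + \mu\vecone \in S$ for all $\mu \in \ZZ$, i.e.\ \eqref{invarone}. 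Thus $S$ is L-convex.

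I do not expect a genuine obstacle: the statement is the set-level counterpart of Theorem~\ref{THlfncond}, and the only structural fact used is Theorem~\ref{THlnatset}, which handles passage between \Lnat-convexity and \eqref{submset}/\eqref{trsubmset}. An equally short alternative would be to apply Theorem~\ref{THlfncond} directly to the indicator function $\delta_S$, observing that $\delta_S$ satisfies \eqref{shiftlfnZ} (necessarily with $r = 0$, since $\delta_S$ is $\{0,+\infty\}$-valued) if and only if $S$ satisfies \eqref{invarone}, that submodularity \eqref{submfn} of $\delta_S$ is \eqref{submset}, that translation-submodularity \eqref{trsubmfn} of $\delta_S$ is \eqref{trsubmset}, and that \Lnat-convexity of $\delta_S$ is \Lnat-convexity of $S$.
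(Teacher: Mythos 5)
Your proof is correct. The paper does not prove Theorem~\ref{THlset} at all --- it is quoted from \cite[Proposition~2.5]{Msurvop21} and stated without proof --- so there is no in-paper argument to compare against. Your cycle (a) $\Rightarrow$ (b) $\Rightarrow$ (c) $\Rightarrow$ (a) is the natural one: the only nontrivial input is Theorem~\ref{THlnatset} (used once in each direction, to pass from \eqref{trsubmset} with $\mu\ge 0$ to \Lnat-convexity and back to \eqref{submset}), and the remaining steps --- two applications of the sublattice property to the pairs $(x-\mu\vecone,\,y)$ and $(x,\,y+\mu\vecone)$, and the specializations $\mu=0$ and $y=x$, $\mu=-\nu$ --- are all verified correctly. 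The alternative route via the indicator function $\delta_S$ and Theorem~\ref{THlfncond} is equally valid and is in the spirit of how the paper routinely identifies set-level and function-level convexity notions.
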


The following theorem reduces the concept of L-convex functions to that of L-convex sets.

\begin{theorem}
[{\cite[Section~16.2]{Fuj05book}}, {\cite[Theorem~7.17]{Mdcasiam}}] 
\label{THlf-fp}
Under the assumption \eqref{ABCfunAssm},
a function $f:\ZZ\sp{n} \to \Rinf$ 
is L-convex (resp., \Lnat-convex) 
if and only if,
for any vector $p \in \RR\sp{n}$, $\argmin f[-p]$ is an L-convex (resp., \Lnat-convex) set
or an empty set.
\finbox
\end{theorem}

\subsubsection{Discrete midpoint convex functions}

A nonempty set $S \subseteq  \ZZ\sp{n}$ is said to be {\em discrete midpoint convex} if
\begin{equation} \label{dirintcnvsetdef}
 x, y \in S, \ \| x - y \|_{\infty} \geq 2
\ \Longrightarrow \
\left\lceil \frac{x+y}{2} \right\rceil ,
\left\lfloor \frac{x+y}{2} \right\rfloor  \in S.
\end{equation}
This condition is  weaker than the defining condition \eqref{midptcnvset}
for an \Lnat-convex set,
and hence every \Lnat-convex set
is a discrete midpoint convex set.

A function $f: \ZZ\sp{n} \to \RR \cup \{ +\infty \}$
is called {\em  globally discrete midpoint convex} 
\cite{MMTT20dmc}
if the discrete midpoint convexity 
\begin{equation} \label{midptcnvfn2}
 f(x) + f(y) \geq
   f \left(\left\lceil \frac{x+y}{2} \right\rceil\right) 
  + f \left(\left\lfloor \frac{x+y}{2} \right\rfloor\right) 
\end{equation}
is satisfied by every pair $(x, y) \in \ZZ\sp{n} \times \ZZ\sp{n}$
with $\| x - y \|_{\infty} \geq 2$.
The effective domain of a
globally discrete midpoint convex function
is necessarily a discrete midpoint convex set.
Obviously, every \Lnat-convex function is globally discrete midpoint convex.
We sometimes abbreviate ``discrete midpoint convex(ity)'' to ``d.m.c.''

We define notation 
$\tilde \mu(a,b)$
for a pair of integers $(a, b)$ by
\begin{equation} \label{ddmcrounddefInt}
\tilde \mu(a,b) = 
 \begin{cases} 
\left\lceil (a+b)/{2} \right\rceil
 & \ (a \geq b) , \\
\left\lfloor (a+b)/{2} \right\rfloor
 & \ (a \leq b) ,
 \end{cases}
\end{equation}
and extend this notation 
to a pair of integer vectors $(x, y)$ as
\begin{equation} \label{ddmcrounddefVec}
\tilde \mu(x,y) =  
\left( \tilde \mu(x_{1},y_{1}), \tilde \mu(x_{2},y_{2}), \ldots, \tilde \mu(x_{n},y_{n})  \right).
\end{equation}
A function $f: \ZZ\sp{n} \to \RR \cup \{ +\infty \}$
is called {\em  directed discrete midpoint convex} 
\cite{TT21ddmc}
if the inequality
\begin{equation} \label{ddmcfndef}
 f(x) + f(y) \geq 
 f( \tilde \mu(x,y)  ) +    f( \tilde \mu(y,x)  ) 
\end{equation}
is satisfied by every pair $(x, y) \in \ZZ\sp{n} \times \ZZ\sp{n}$.
A nonempty set $S$ is called {\em directed discrete midpoint convex} 
if its indicator function
$\delta_{S}$ is a directed discrete midpoint convex function.
\Lnat-convex functions are directed discrete midpoint convex functions,
and \Lnat-convex sets are directed discrete midpoint convex sets
(see \eqref{fnfamddmc} in Proposition \ref{PRfnclassInc}).

\subsubsection{\LL-convex functions}
\label{SCl2fn}

A nonempty set $S$ $\subseteq \ZZ\sp{n}$
is called 
{\em \LL-convex}  
(resp., {\em \LLnat-convex})  
if it can be represented as the Minkowski sum
(vector addition)
of two L-convex (resp., \Lnat-convex) sets
\cite[Section~5.5]{Mdcasiam}.
That is,
\[
 S = \{ x + y \mid x \in S_{1} ,  y \in S_{2} \},
\]
where
$S_{1}$ and $S_{2}$ are L-convex (resp., \Lnat-convex) sets.
An \LLnat-convex set 
is the intersection of an 
\LL-convex set 
with a coordinate hyperplane.
That is, for L-convex sets, 
the operations of the Minkowski addition and 
the restriction to a coordinate hyperplane
commute with each other.
This fact is stated in \cite[p.129]{Mdcasiam} without a proof,
and a proof can be found in \cite[Section~2.3]{MM21l2ineq}.
The polyhedral description of \LL-convex and \LLnat-convex sets
is given in \cite{MM21l2ineq}. 
An \LL-convex set has the property of translation invariance 
in \eqref{invarone}.

A function 
$f: \ZZ\sp{n} \to \RR \cup \{ +\infty \}$
is said to be 
{\em \LL-convex}
if it can be represented as the (integral) infimal convolution 
$f_{1} \Box f_{2}$  of 
two L-convex functions
$f_{1}$ and $f_{2}$, that is,
if
\[
f(x) =  (f_{1} \Box f_{2})(x) = \inf\{ f_{1}(y)+ f_{2}(z) 
        \mid x = y+z; \  y,z \in \ZZ\sp{n}  \}
\qquad (x \in \ZZ\sp{n}).
\]
It is known \cite{Tam03dec}
(see also \cite[Note 8.37]{Mdcasiam})
that the infimum
is always attained as long as it is finite. 
The (integer) infimal convolution of two \Lnat-convex functions
is called an {\em \LLnat-convex function}.
A nonempty set $S$ is \LL-convex (resp., \LLnat-convex)
if and only if its indicator function
$\delta_{S}$ is \LL-convex (resp., \LLnat-convex).

\subsection{M-convexity}
\label{SCappMconv}

{\rm M}- and \Mnat-convex functions form major classes of discrete convex functions
\cite[Chapter~6]{Mdcasiam}.
The concept of \Mnat-convex functions
was introduced in \cite{MS99gp}
as an equivalent variant of 
{\rm M}-convex functions introduced earlier in \cite{Mstein96}.

\subsubsection{\Mnat-convex functions}
\label{SCmnatfn}

For two vectors $x, y \in \ZZ\sp{n}$ we use notations
\begin{equation*} 
\suppp(x-y) = \{ i \mid x_{i} >  y_{i} \},
\qquad
\suppm(x-y) = \{ i \mid x_{i} <  y_{i} \} .
\end{equation*}
We say that a function
$f: \ZZ\sp{n} \to \RR \cup \{ +\infty \}$
with $\dom f \not= \emptyset$
is {\em M$\sp{\natural}$-convex}, if,
for any $x, y \in \ZZ\sp{n}$ and $i \in \suppp(x-y)$, 
we have (i)
\begin{equation}  \label{mconvex1Z}
f(x) + f(y)  \geq  f(x -\unitvec{i}) + f(y+\unitvec{i})
\end{equation}
or (ii) there exists some $j \in \suppm(x-y)$ such that
\begin{equation}  \label{mconvex2Z}
f(x) + f(y)   \geq 
 f(x-\unitvec{i}+\unitvec{j}) + f(y+\unitvec{i}-\unitvec{j}) .
\end{equation}
This property is referred to as the {\em exchange property}.
A more compact expression of the exchange property is as follows:
\begin{description}
\item[\Mnvexb]
 For any $x, y \in \ZZ\sp{n}$ and $i \in \suppp(x-y)$, we have
\begin{equation} \label{mnconvexc2Z}
f(x) + f(y)   \geq 
\min_{j \in \suppm(x - y) \cup \{ 0 \}} 
 \{ f(x - \unitvec{i} + \unitvec{j}) + f(y + \unitvec{i} - \unitvec{j}) \},
\end{equation}
\end{description}
where $\unitvec{0}=\veczero$ (zero vector).
In the above statement we may change
``For any $x, y \in \ZZ\sp{n}$\,'' to ``For any $x, y \in \dom f$\,''
 since if $x \not\in \dom f$ or $y \not\in \dom f$,
the inequality \eqref{mnconvexc2Z} trivially holds with $f(x) + f(y)  = +\infty$.

M$\sp{\natural}$-convex functions can be characterized by 
a number of different exchange properties including a local exchange property
under the assumption that 
function $f$ is (effectively) defined on an M$\sp{\natural}$-convex set.
See 
\cite[Theorem~6.8]{ST15jorsj},
\cite[Chapters 4 and 6]{Mdcasiam},
\cite[Section~4]{Mdcaeco16}, and
\cite{MS18mnataxiom}
for detailed discussion about the exchange properties.

It follows from \Mnvex that the effective domain $S = \dom f$
of an M$\sp{\natural}$-convex function $f$
has the following exchange property:
\begin{description}
\item[\Bnvexb] 
For any $x, y \in S$ and $i \in \suppp(x-y)$, we have
(i)
$x -\unitvec{i} \in S$ and $y+\unitvec{i} \in S$
\ or \  
\\
(ii) there exists some $j \in \suppm(x-y)$ such that
$x-\unitvec{i}+\unitvec{j}  \in S$ and $y+\unitvec{i}-\unitvec{j} \in S$.
\end{description}
A nonempty set $S \subseteq \ZZ\sp{n}$ having this property 
is called an {\em \Mnat-convex set}.

An \Mnat-convex set
is nothing but the set of integer points in an integral generalized polymatroid
\cite[Section~4.7]{Mdcasiam}.
In particular, an integral polymatroid
can be identified with a bounded \Mnat-convex set 
containing $\veczero$ and consisting of nonnegative vectors.
As is well known \cite[Section~2.2]{Fuj05book}, 
the set of integer points of an integral polymatroid
is described as
\begin{equation} \label{polymatineq}
 S= \{ x \in \ZZ\sp{n} \mid x \geq \veczero, \  x(X) \leq \rho(X) \ (X \subseteq N) \}
\end{equation}
with a nondecreasing integer-valued submodular function $\rho$.
To be more specific, the set function $\rho: 2\sp{N} \to \ZZ$ should satisfy
\begin{align}
&
\rho(\emptyset) = 0,
\label{polymatr1}
\\ &
X \subseteq Y \  \Longrightarrow \ 
\rho(X) \leq \rho(Y), 
\label{polymatr2}
\\ &
\rho(X) + \rho(Y) \geq
\rho(X \cup Y) + \rho(X \cap Y)
\quad (X,Y \subseteq N).
\label{polymatr3}
\end{align}
Moreover, the convex hull $\overline{S}$ of $S$ is described similarly as
$\overline{S} = 
\{ x \in \RR\sp{n} \mid x \geq \veczero, \  x(X) \leq \rho(X) \ (X \subseteq N) \}$
and we can determine $\rho$ from $S$ by
$\rho(X) = \max \{ x(X) \mid x \in S \}$
$(X \subseteq N)$.

\subsubsection{M-convex functions}
\label{SCmfn}

If a set $S \subseteq \ZZ\sp{n}$ lies on a hyperplane with a constant component sum
(i.e., $x(N) = y(N)$ for all  $x, y \in S$),
the exchange property \Bnvex
takes a simpler form
(without the possibility of the first case (i)): 
\begin{description}
\item[\Bvexb] 
For any $x, y \in S$ and $i \in \suppp(x-y)$, 
there exists some $j \in \suppm(x-y)$ such that
$x-\unitvec{i}+\unitvec{j}  \in S$ and $y+\unitvec{i}-\unitvec{j} \in S$.
\end{description}
A nonempty set $S \subseteq \ZZ\sp{n}$ having this exchange property 
is called an {\em M-convex set},
which is an alias for the set of integer points in an integral base polyhedron.  
An M-convex set $S$ contained in the nonnegative orthant $\ZZ_{+}\sp{n}$ 
can be described as
\begin{equation} \label{msetineq}
 S= \{ x \in \ZZ\sp{n} \mid  x(X) \leq \rho(X) \ (X \subset N), \  x(N) = \rho(N)   \}
\end{equation}
with a set function $\rho: 2\sp{N} \to \ZZ$
satisfying \eqref{polymatr1}, \eqref{polymatr2}, and \eqref{polymatr3}.

An M$\sp{\natural}$-convex function 
whose effective domain is an M-convex set
is called an {\em M-convex function}
\cite{Mstein96,Mdca98,Mdcasiam}.
In other words,  a function 
$f: \ZZ\sp{n} \to \RR \cup \{ +\infty \}$
is M-convex
if and only if it satisfies
the exchange property:
\begin{description}
\item[\Mvexb]
 For any $x, y \in \dom f$  and $i \in \suppp(x-y)$, 
there exists
$j \in \suppm(x-y)$ such that
\eqref{mconvex2Z} holds.
\end{description}
M-convex functions can be characterized by a local exchange property
under the assumption 
that function $f$ is (effectively) defined on an M-convex set.
See \cite[Section~6.2]{Mdcasiam}.

M-convex functions and
M$\sp{\natural}$-convex functions are equivalent concepts,
in that M$\sp{\natural}$-convex functions in $n$ variables 
can be obtained as projections of M-convex functions in $n+1$ variables.
More formally, let ``$0$'' denote a new element not in $N$ and
$\tilde{N} = \{0\} \cup N =\{ 0,1,\ldots,n  \}$.
A function $f : \ZZ\sp{n} \to \RR \cup \{ +\infty \}$ is M$\sp{\natural}$-convex
if and only if  the function 
$\tilde{f} : \ZZ\sp{n+1} \to \RR \cup \{ +\infty \}$ 
defined by
\begin{equation} \label{mfnmnatfnrelationvex}
  \tilde{f}(x_{0},x) = \left\{ \begin{array}{ll}
      f(x)    & \mbox{ if $x_{0} = {-}x(N)$} \\
      +\infty & \mbox{ otherwise}
    \end{array}\right.
 \qquad ( x_{0} \in \ZZ, x \in \ZZ\sp{n})
\end{equation}
is an M-convex function.

The following theorem reduces the concept of M-convex functions to that of M-convex sets.

\begin{theorem}
[{\cite[Section~17]{Fuj05book}}, {\cite[Theorem~6.30]{Mdcasiam}}]
\label{THmf-fp}
Under the assumption \eqref{ABCfunAssm},
a function $f:\ZZ\sp{n} \to \Rinf$ 
is M-convex (resp., \Mnat-convex) 
if and only if,
for any vector $p \in \RR\sp{n}$, $\argmin f[-p]$ is an M-convex (resp., \Mnat-convex) set
or an empty set.
\finbox
\end{theorem}

\subsubsection{\MM-convex functions}
\label{SCm2fn}

A nonempty set $S$ $\subseteq \ZZ\sp{n}$
is called {\em  \MM-convex}  
if it can be represented as the intersection of two {\rm M}-convex sets
\cite[Section~4.6]{Mdcasiam}.
Similarly, a nonempty set $S$ $\subseteq \ZZ\sp{n}$
is called {\em \MMnat-convex}  
if it can be represented as the intersection of two \Mnat-convex sets
\cite[Section~4.7]{Mdcasiam}.
An \MM-convex set $S$ lies on a hyperplane with a constant component sum
(i.e., $x(N) = y(N)$ for all  $x, y \in S$).

A function 
$f: \ZZ\sp{n} \to \RR \cup \{ +\infty \}$
is said to be 
{\em \MM-convex}
if it can be represented as the sum of
two M-convex functions
$f_{1}$ and $f_{2}$, that is,
if
\[
f(x) =  f_{1}(x) + f_{2}(x)
\qquad (x \in \ZZ\sp{n}).
\]
The sum of two \Mnat-convex functions
is called an {\em \MMnat-convex function}.
A nonempty set $S$ is \MM-convex (resp., \MMnat-convex)
if and only if its indicator function
$\delta_{S}$ is \MM-convex (resp., \MMnat-convex).

\medskip

\noindent {\bf Acknowledgement}. 
This work was supported by JSPS KAKENHI Grant Numbers 
JP17K00037, JP20K11697, and JP21K04533.
The authors thank Akihisa Tamura for careful reading of the manuscript.





\newpage
\tableofcontents

\end{document}